\date{\today}
\newcommand{\distas}[1]{\mathbin{\overset{#1}{\kern\z@\sim}}}
\newcommand{\distras}[1]{
  \mathbin{\overset{#1}{\kern\z@\resizebox{\wd\mybox}{\ht\mysim}{$\sim$}}}
}
    \newtheorem{theorem}{Theorem}
    \newtheorem{lemma}[theorem]{Lemma}
    \newtheorem{proposition}[theorem]{Proposition}
\theoremstyle{definition} 
    \newtheorem{definition}[theorem]{Definition}
    \newtheorem{remark}[theorem]{Remark}
    \newtheorem{example}[theorem]{Example}
    \newtheorem{exercise}[theorem]{Exercise}
    \newtheorem{notation}[theorem]{Notation}
\renewcommand \qedsymbol {$\blacksquare$}
\def\Z{\mathbb{Z}}
\def\N{\mathbb{N}}
\def\R{\mathbb{R}}
\def\Z{\mathbb{Z}}
\def\sgn{{\mb{sgn}}}
\def\<{\langle}
\def\>{\rangle}
\def\mb{\mbox}
\def\bar{\overline}
\newcommand\Tr{{\mbox{Tr}}}
\newcommand\mnote[1]{} 
\newcommand\be{\begin{equation*}}
\newcommand\ee{\end{equation*}}
\newcommand\ben{\begin{equation}}
\newcommand\een{\end{equation}}
\newcommand\bes{\begin{eqnarray*}}
\newcommand\ees{\end{eqnarray*}}
\newcommand\bex{\begin{exercise}}
\newcommand\eex{\end{exercise}}
\newcommand\beg{\begin{example}}
\newcommand\eeg{\end{example}}
\newcommand\benu{\begin{enumerate}}
\newcommand\eenu{\end{enumerate}}
\newcommand\beit{\begin{itemize}}
\newcommand\eeit{\end{itemize}}
\newcommand\berk{\begin{remark}}
\newcommand\eerk{\end{remark}}
\newcommand\bdefn{\begin{defintion}}
\newcommand\edefn{\end{definition}}
\newcommand\bthm{\begin{theorem}}
\newcommand\ethm{\end{theorem}}
\newcommand\bprf{\begin{proof}}
\newcommand\eprf{\end{proof}}
\newcommand\blem{\begin{lemma}}
\newcommand\elem{\end{lemma}}
\newcommand{\supp}{\mbox{\rm supp}}
\newcommand{\sm}{{\raise0.3ex\hbox{$\scriptstyle \setminus$}}}
\def\mb{\mbox}
\def\CHI{\mathchoice%
{\raise2pt\hbox{$\chi$}}%
{\raise2pt\hbox{$\chi$}}%
{\raise1.3pt\hbox{$\scriptstyle\chi$}}%
{\raise0.8pt\hbox{$\scriptscriptstyle\chi$}}}
\def\smalloplus{\raise1pt\hbox{$\,\scriptstyle \oplus\;$}}
\title [On the spectrum of Random Simplicial Complexes in Thermodynamic Regime] {On the spectrum of Random Simplicial complexes in Thermodynamic Regime}
\author{Kartick Adhikari}
\address{Department of Mathematics\\
 Indian Institute of Science Education and Research Bhopal,\\
  Bhauri, Bhopal, Madhya Pradesh 462066, India }
\email{kartick [at] iiserb.ac.in}
\author{Kiran Kumar A.S.}
\address{Department of Mathematics\\
        Indian Institute of Technology Bombay\\
         Powai, Mumbai, Maharashtra 400076, India}
 \email{kiran [at] math.iitb.ac.in}
\author{Koushik Saha}
\address{Department of Mathematics\\
	Indian Institute of Technology Bombay\\
	Powai, Mumbai, Maharashtra 400076, India}
\email{koushik.saha [at] iitb.ac.in}
\begin{document}

\begin{abstract}

Linial-Meshulam complex is a random simplicial complex on $n$ vertices with a complete $(d-1)$-dimensional skeleton and $d$-simplices occurring independently with probability $p$. Linial-Meshulam complex is one of the most studied generalizations of the Erd\H{o}s-R{\'e}nyi random graph in higher dimensions.

In this paper, we discuss the spectrum of adjacency matrices of the Linial-Meshulam complex when $np \rightarrow \lambda$. We prove the existence of a non-random limiting spectral distribution(LSD) and show that the LSD of signed and unsigned adjacency matrices of Linial-Meshulam complex are reflections of each other. We also show that the LSD is unsymmetric around zero, unbounded and under the normalization $1/\sqrt{\lambda d}$, converges to standard semicircle law as $\lambda \rightarrow \infty$.

In the later part of the paper, we derive the local weak limit of the line graph of the Linial-Meshulam complex and study its consequence on the continuous part of the LSD.
\end{abstract}

\maketitle

\noindent{\bf Keywords :} Linial-Meshulam complex, random simplicial complex, Limiting spectral distribution, Local Weak Convergence

\section{Introduction and Main Results}

Random graphs is a major area of research in modern combinatorics. The study of random graphs was initiated by Paul Erd\H{o}s and Alfr{\'e}d R{\'e}nyi in early 1960s \cite{Erdos_renyi_59},\cite{Erdos_Renyi60}. The model they introduced, later known as Erd\H{o}s-R{\'e}nyi random graph, is one of the most important models of random graphs. In network science, random graphs are popular models to study interactions between two nodes. To capture the interactions between more than two nodes, the generalization to simplicial complexes becomes helpful. With the growth in topological data analysis, simplicial complexes are increasingly being used to model real-world systems \cite{TDA_Simplicial},\cite{Brain_Nature_Neuro},\cite{Brain_Sci_Rep}. For systems with inherent randomness, random simplicial complexes are thus useful models \cite{Courtney_Ginestra}.

Linial-Meshulam complex was developed by Linial, Meshulam and Wallach in \cite{Linial_Meshulam_2006},\cite{Meshulam_wallach}. Linial-Meshulam complex  is the earliest random simplicial complex studied in mathematics literature and is a generalization of Erd\H{o}s-R{\'e}nyi random graphs to higher dimensions $d \geq 2$. A Linial-Meshulam complex, denoted by $Y_d(n,p)$, is a random $d$-dimensional complex on $n$ vertices with a complete $(d-1)$-dimensional skeleton and $d$-cells occurring independently with probability $p$. Linial-Meshulam complex has attracted considerable attention from the mathematical community and is one of the most extensively studied generalizations of Erd\H{o}s-R{\'e}nyi random graph \cite{nikolas_Pry},\cite{antti_ron},\cite{linial_peled}.

For an $n \times n$ symmetric matrix $M_n$ with eigenvalues $\lambda_1,\lambda_2,\ldots, \lambda_n$, the empirical spectral distribution of $M_n$ is defined as
\begin{equation}
	F_{M_n}(x)=\frac{1}{n} \sum_{i=1}^{n} \mathbf{1}(\lambda_i \leq x).
\end{equation} 
The probability measure on the real line corresponding to $F_{M_n}$ is known as the  empirical spectral measure of $M_n$, and is given by
\begin{equation}\label{defn:spec_dist}
	\mu_{M_n}=\frac{1}{n} \sum_{i=1}^{n} \delta_{\lambda_i}
\end{equation}
 For a random matrix $M_n$, both the empirical spectral distribution and the empirical spectral measure are random quantities. For random matrices $M_n$, we define the expected empirical spectral distribution(EESA) as the expectation of the empirical spectral distribution, i.e. $\mathbb{E}F_{M_n}$.
For a sequence of matrices $(M_n)_{n \in \N}$, the limit of the empirical spectral distributions $F_{M_n}$ as $n \rightarrow \infty$, is known as the limiting spectral distribution (LSD) of the sequence $(M_n)$.
In this paper, we study the limiting spectral distribution of the adjacency matrices of Linial-Meshulam complex.


We consider two types of adjacency matrices for simplicial complexes, signed adjacency matrix and unsigned adjacency matrix. The signed adjacency matrix is related to high dimensional expanders and combinatorial Laplacians. The unsigned adjacency matrix is related to random walks on random simplicial complexes \cite{ori_Spectrum_homology} and the energy of the random simplicial complexes\cite{Knill_energy}. Adjacency matrices are also related to different notions of centrality measures of simplicial complexes such as simplicial degree centrality and eigenvector centrality \cite{Gomez_Centality}.

 For $np \rightarrow \infty$ it is known that the empirical spectral distribution of the adjacency matrix of Erd\H{o}s-R{\'e}nyi random graph converges weakly to the standard semi-circle law under the normalization ${1}/{\sqrt{np(1-p)}}$\cite{Furedi_Komlos}.  In \cite{antti_ron}, Knowles and Rosenthal studied the spectral distribution of the signed adjacency operator of $Y_d(n,p)$ for $np(1-p) \rightarrow \infty$ and showed that the limiting spectral distribution under the scaling ${1}/{\sqrt{np(1-p)}}$ is the standard semi-circle law. Recently, it was observed in \cite{Leibzirer_Ron} that the empirical spectral distribution of the centred version of unsigned adjacency operator also converge to the standard semi-circle law under the scaling $1/{\sqrt{np(1-p)}}$ when $np(1-p) \rightarrow \infty$. The limiting spectral distribution of the Lagrangian for $np \rightarrow \lambda>0$ was studied in \cite{Shu_Kanazawa_Betti}, for much larger class of random simplicial complexes.

For $np \rightarrow \lambda >0 $, the limiting spectral distribution, $\nu_\lambda$, of the adjacency matrix of Erd\H{o}s-R{\'e}nyi random graph was first studied in \cite{Bauer_golinelli}. Later analytical results have been obtained in \cite{Zakharevich},\cite{Jung_Lee}. The exact limiting distribution is not known in this case, but several interesting properties of the limiting distribution are known. For example, it is known that for every algebraic number $x \in \R$, its mass $\nu_{\lambda}(\{x\})$ is strictly greater than 0 \cite{Salez_algebraic}. It was proved in \cite{BVS} that $\nu_\lambda$ has a continuous part if and only if $\lambda$ is greater than 1, and a non-vanishing absolutely continuous part for $\nu_{\lambda}$ at zero emerges for $\lambda=e$ \cite{Coste_Salez}.

In this paper, we study the limiting spectral distribution of signed and unsigned adjacency matrices of Linial-Meshulam complex when $np \rightarrow \lambda \in (0, \infty)$. Our first main result proves the existence of a non-random LSD for these matrices (see Theorem \ref{thm: existence_LSD}). We further show that the LSD of signed and unsigned adjacency matrices are reflections of each other as $np \rightarrow \lambda$. This property is in general not true for individual simplicial complexes or Linial-Meshulam complexes on finitely many vertices.

\begin{figure}[h]
	\includegraphics[height=65mm, width =165mm]{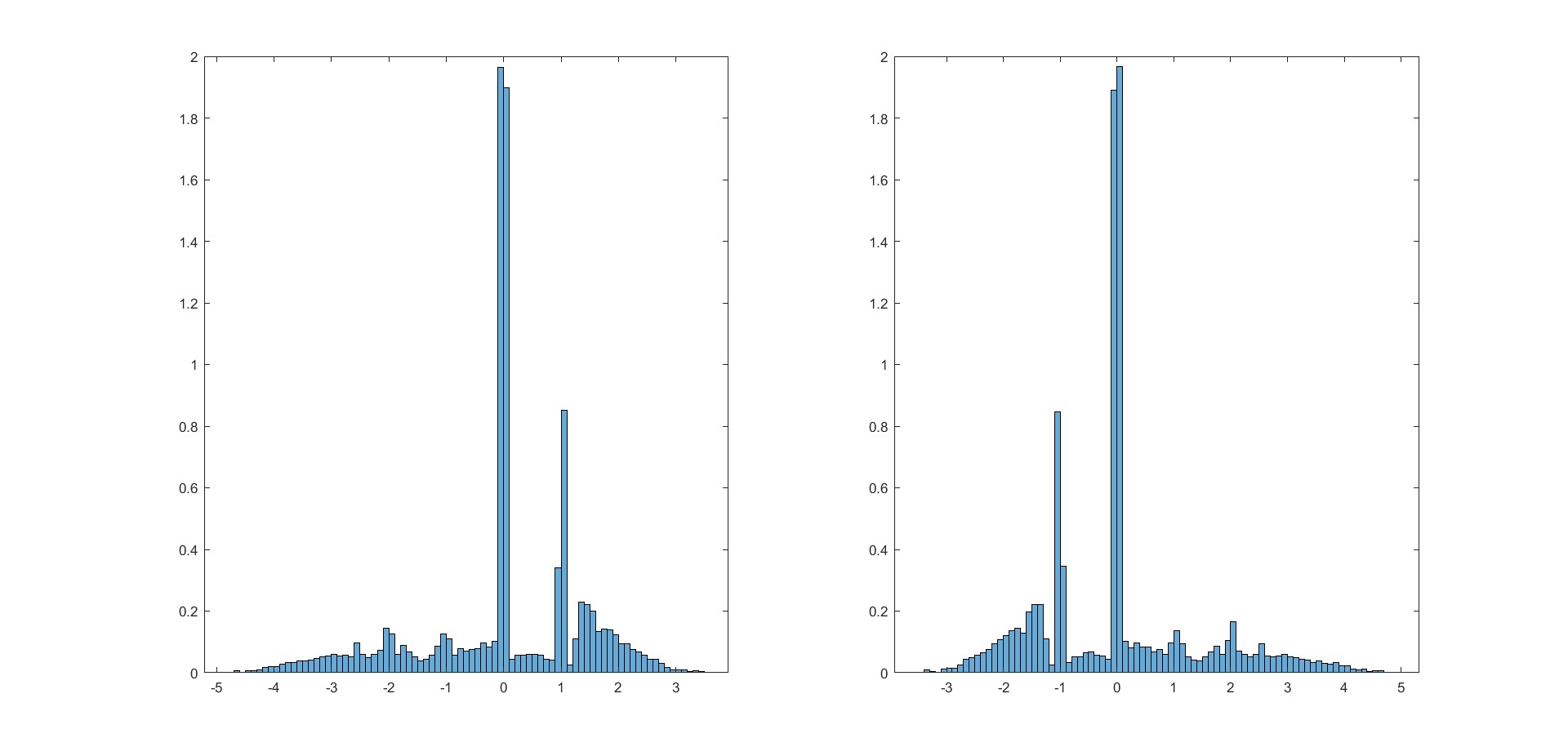}
	\bigbreak
	\includegraphics[height=65mm, width =165mm]{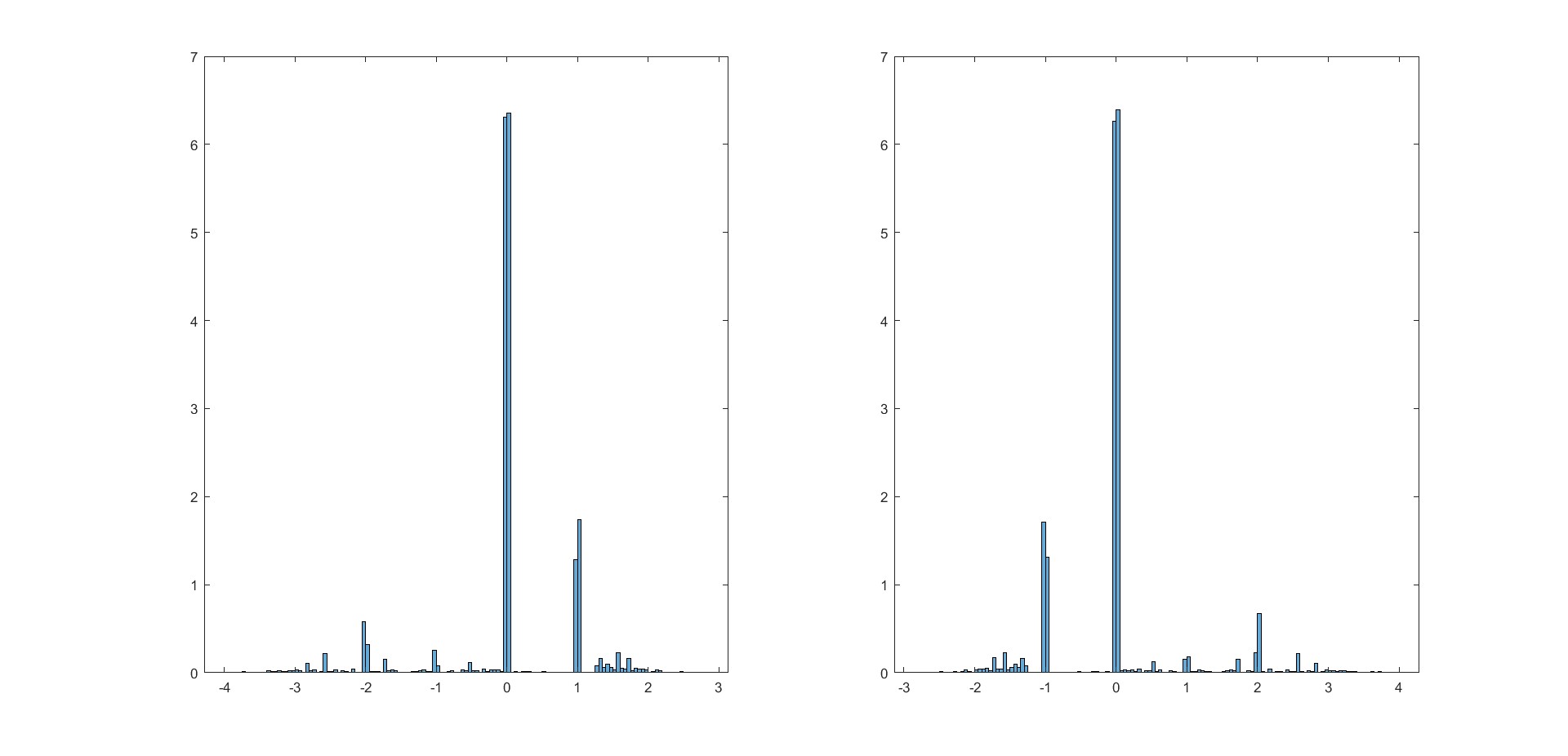}
	\caption{(above) The normalized histograms for eigenvalues of unsigned (left) and signed (right) adjacency matrices of Linial-Meshulam complex for $(d,n,\lambda)=(2,100,1)$. (below) The normalized histograms for eigenvalues of unsigned (left) and signed (right) adjacency matrices of Linial-Meshulam complex for $(d,n,\lambda)=(2,100,0.5)$.}\label{fig: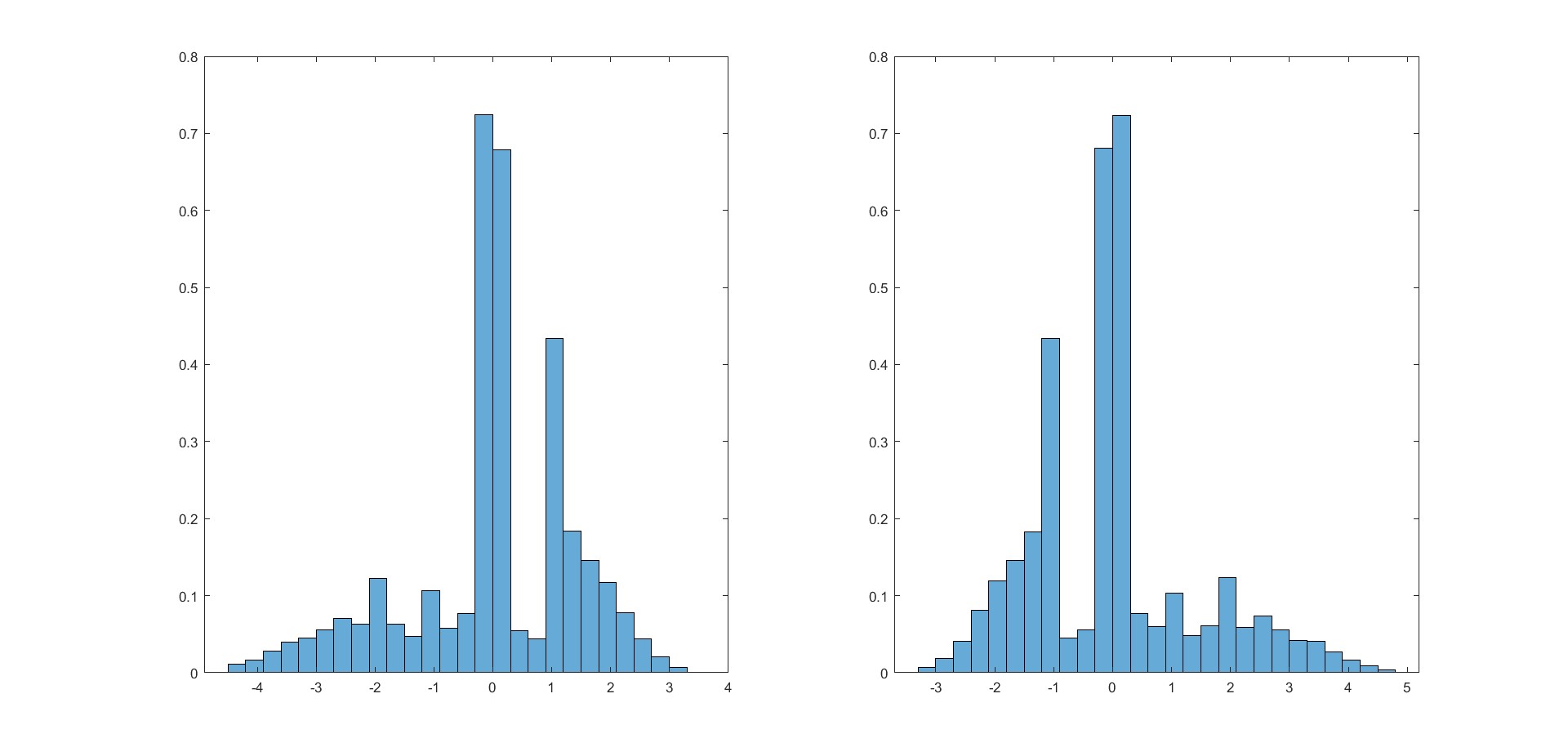}
\end{figure}

In connection with this, we prove several interesting properties of the limiting spectral distribution associated to moments. It is known that the LSD, $\mu_\lambda$, of Erd\H{o}s-R{\'e}nyi random graphs for the cases $np \rightarrow \lambda$ under rescaling converge to the Wigner semi-circle law as $\lambda \rightarrow \infty$ \cite{Nathenel_Laurent}, \cite{Jung_Lee}. In this paper, we show that the same conclusion holds for Linial-Meshulam complex as well, for both signed and unsigned adjacency matrices (See Proposition \ref{prop:Gamma_semicircle}). We also obtain the almost sure limit for the Frobenius norm of the signed and unsigned adjacency matrices. 

Theorem \ref{thm: existence_LSD} implies that the limiting spectral distribution of signed adjacency matrix can be completely determined by the limiting distribution of the unsigned adjacency matrix. In section \ref{sec: local_weak}, we study a graph associated with Linial-Meshulam complex, known as the line graph of the Linial-Meshulam complex. The line graph has same adjacency matrix as the Linial-Meshulam complex and thus the techniques from graph theory can be used to study the simplicial complexes as well. In particular, we use analytical techniques on graphs to get insights about $\Gamma_d(\lambda)$.

 The concept of local weak convergence introduced independently by Benjamin and Schramm \cite{Benjamin_Schramm_lwc}, and Aldous and Steele \cite{Aldous_Steele}, is a valuable tool in the study of random graphs. The central idea here is to study the probability measure induced by a weighted random graph (random network) on the metric space of unlabelled rooted locally finite networks. From the weak limit of the probability measure, local properties of the random structure for large size is obtained. For detailed discussions on this topic, see \cite{Lyon_Aldous} or \cite{remco_van}. In last decade, local weak convergence has evolved as an effective tool to study properties of the limiting spectral distribution of sparse Erd\H{o}s-R{\'e}nyi random graphs \cite{Bordenave_Lelarge_Salez},\cite{Salez_algebraic}.


For $p=\lambda/n$ and $n \rightarrow \infty$, the local weak limit of the Erd\H{o}s-R{\'e}nyi graph is the Galton-Watson tree with offspring distribution $\operatorname{Poi}(\lambda)$\cite{Dembo_Montanari}. In this paper, we generalize this result to higher dimensions. We show that for $d \geq 2$, the local weak limit of the line graph of Linial-Meshulam complex is a generalization of Galton-Watson tree, which we call the $d$-block Galton-Watson graph (see Theorem \ref{thm: weak limit}).

In \cite{linial_peled}, Linial and Peled had considered a bipartite graph associated to the boundary operator of Linial-Meshulam complex to derive thresholds for $d$-collapsibility and vanishing of $d-$th homology. The crucial part of our work is to connect the unlabelled bipartite graph to the line graph through a measurable map. The advantage here is that the limiting measure $dGW(d\operatorname{Poi}(\lambda))$, is a unimodular measure. As a corollary of this, we show that for $\lambda \leq 1/d$, the limiting spectral distribution of adjacency matrices is purely atomic.

\section{Preliminaries and Main Results}
In this section, we introduce the necessary preliminaries and state our main results. For convenience in presentation, we divide this section into two subsections. In subsection \ref{sub:LSD of matrices}, we introduce the necessary combinatorics behind random simplicial complexes and state the results on the existence of LSD of adjacency matrices of $Y_d(n,p)$ and some of its properties. In subsection \ref{subsec:line grap}, we state the main results concerning the line graph of $Y_d(n,p)$.
\subsection{LSD of adjacency matrices of $Y_d(n,p)$}\label{sub:LSD of matrices}
\begin{definition}\label{defn: d-cell}
	Let $V$ be a finite set. A simplicial complex $X$ with vertex set $V$ is a collection $X \subset \mathcal{P}(V)$ such that if $\tau \in X$ and $\sigma \subset \tau$, then $\sigma \in X$. An element of the simplicial complex is called a cell. The dimension of a cell $\sigma \in X$ is defined as $|\sigma|-1$, and an element of dimension $j$ is called a $j-$cell. The dimension of a non-empty simplicial complex is defined as the maximum of the dimensions of its elements.
	For a simplicial complex $X$ and integer $j \geq -1$, we denote the set of all $j-$cells in $X$ by $X^j$.
\end{definition}

We remark here that all non-empty simplicial complexes contain the null set and the null set is the only cell of dimension $-1$. For a simplicial complex $X$, the cells of dimension 0 are the singletons $\{v\} \in X$ such that $v$ is an element of $V$.

For a vertex set $V$ and $j \geq -1$, we define the complete $j$-dimensional complex as the set of all subsets of $V$ with cardinality less than or equal to $j+1$. Another related object is the $\ell$-dimensional skeleton. Let $X$ be a simplicial complex on $V$. We say $X$ has a complete $\ell$-dimensional skeleton if $X$ contains all subsets of $V$ of cardinality less than or equal to $\ell+1$. Throughout this paper, unless stated otherwise, the vertex set taken is $V=\{1, 2, \ldots , n\}$.

An Erd\"{o}s-Renyi graph on $n$ vertices is the random graph on the vertex set $V=\{1,2,\ldots,n\}$ constructed by adding edges independently with probability $p$. In other words, Erd\"{o}s-Renyi graph is the 1-dimensional random simplicial complex with a complete $0$-dimensional skeleton and $1$-cells occurring independently with probability $p$. Linial-Meshulam model is a generalization of this idea to higher dimensional simplicial complexes.

\begin{definition}\label{defn:Linial-Meshulam Model}
	For natural numbers $d \geq 2,n \geq d+1$ and $p \in [0,1]$, $Y_d(n,p)$ is called a random simplicial complex, if $Y_d(n,p)$ has a complete $(d-1)$-dimensional skeleton and $d$-cells occurring independently with probability $p$. When $d,n,p$ are clear from the context, we shall use $Y$ to denote $Y_d(n,p)$.
\end{definition}

The random simplicial complex defined above is known as the Linial-Meshulam complex. Several other variations of random simplicial complexes have also been studied in the literature. For brief surveys on different types of random simplicial complexes and connections between them, see \cite{Kahle_survey}, [Chapter 22\cite{Random_complex_survey}] or \cite{Bobrowski_survey}.

We now proceed to define orientation on a $j$-cell. An orientation is an ordering of the vertices of the $j$-cell, and an oriented cell is represented by a square bracket. Two orientations $[x_0,x_1,\ldots ,x_j]$ and $[y_0,y_1,\ldots , y_j]$ are said to be equal if the permutation $g$ given by $g(x_i)=y_i$ is an even permutation. Thus for $j \geq 1$, each $j$-cell has exactly two orientations. For an oriented $j$-cell $\sigma$, we use $\overline{\sigma}$ to denote the same $j$-cell with the opposite orientation. As an example, for the $j$-cell $\{  x, y, z\}$, $[x,y,z]=[y,z,x]=[z,x,y]$ and $[x,z,y]=[z,y,x]=[y,x,z]$ are the two different orientations.

Suppose $V$ is an ordered set. The ordering on $V$ induces an ordering on each $j$-cell. The orientation corresponding to this ordering is called the positive orientation, while the other orientation is called the negative orientation. The set of positively oriented $j$-cells of $X$ is denoted by $X_+^j$ and the set of all oriented $j$-cells is denoted by $X_{\pm}^j$. For $j \geq 1$, we have $|X_{\pm}^j|=2|X^j|$.


\begin{figure}
	\captionsetup[subfigure]{justification=centering}
	\hspace{-10mm}
	\begin{subfigure}[b]{0.45\textwidth}
		\centering
		\includegraphics[height=45mm, width=\textwidth]{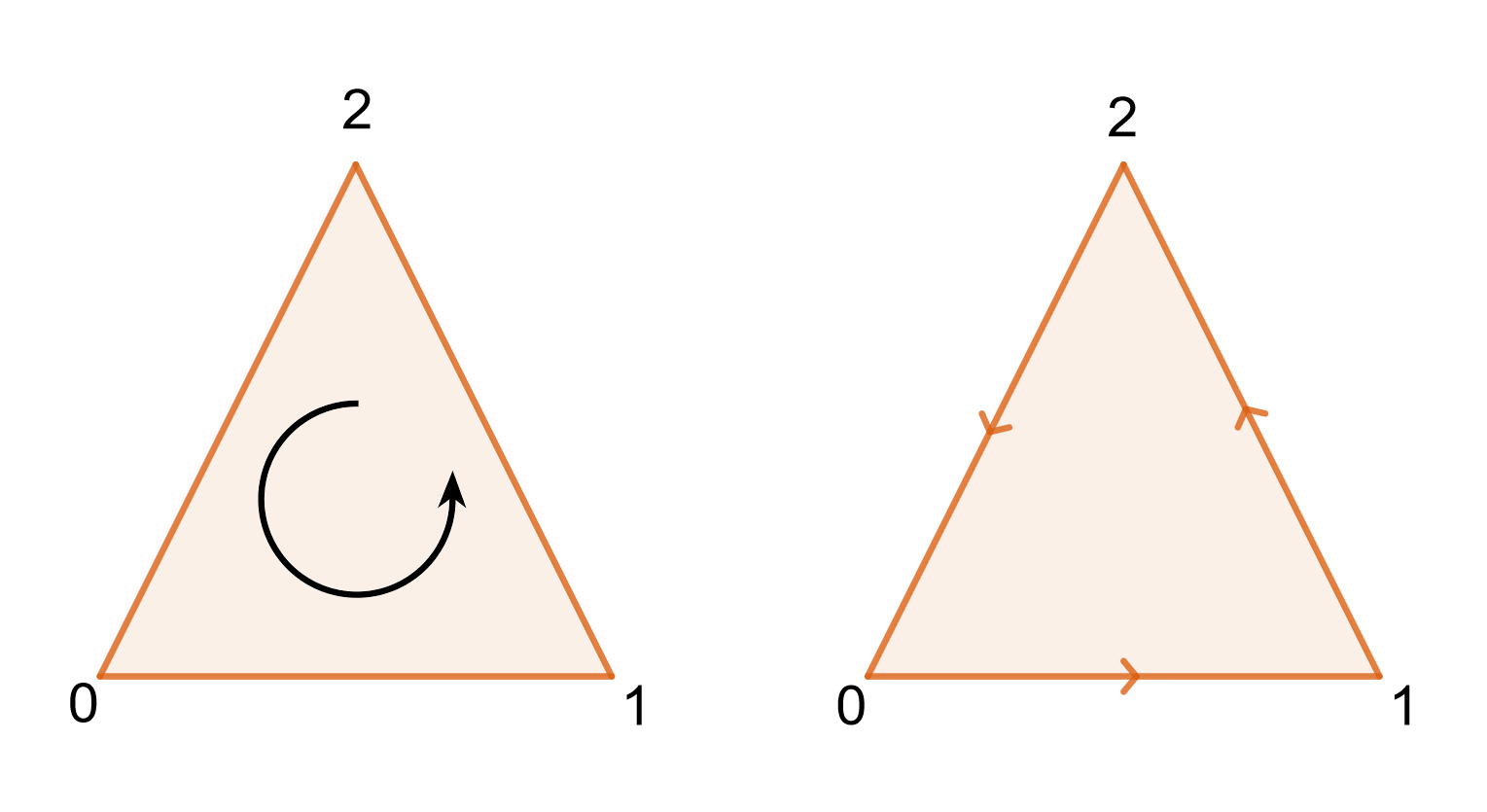}
		\caption{}
	\end{subfigure}
	\hspace{10mm}
	\begin{subfigure}[b]{0.45\textwidth}
		\centering
		\includegraphics[height=45mm, width=\textwidth]{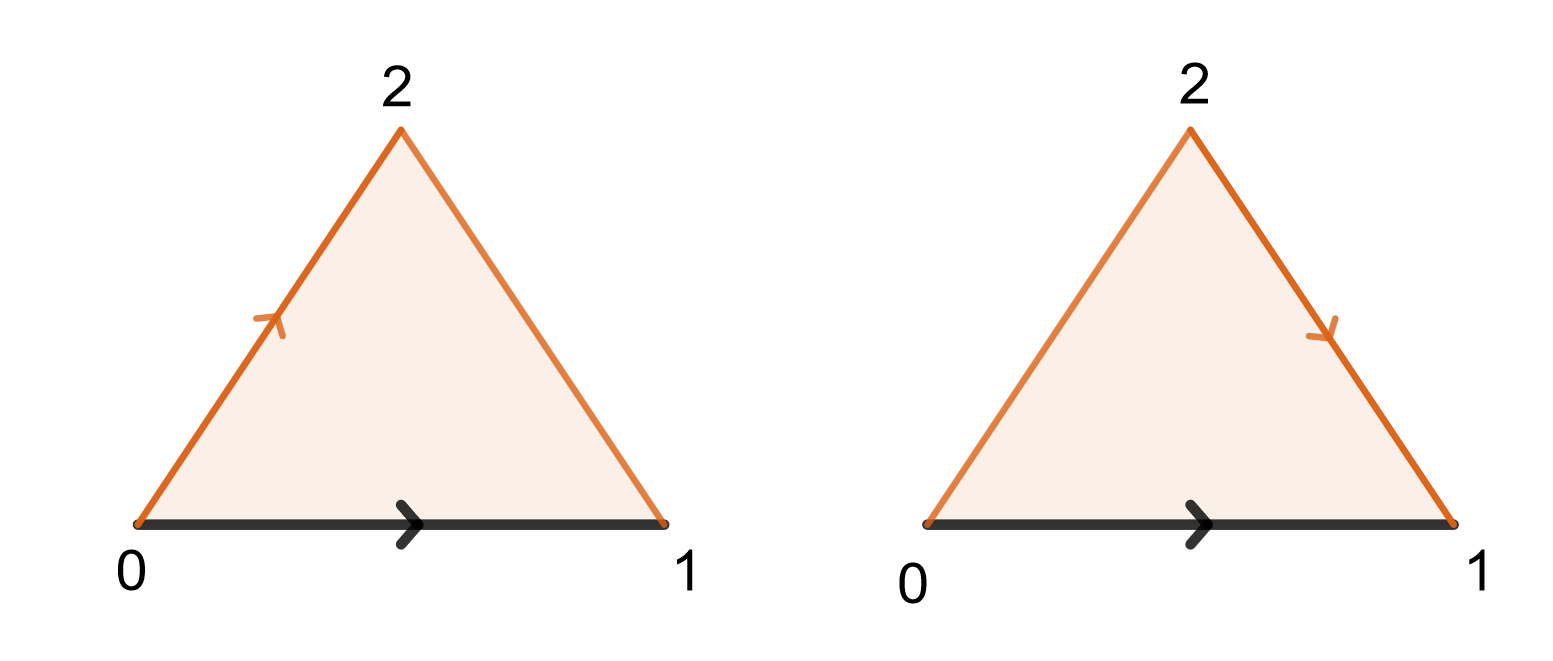}
		\caption{}
	\end{subfigure}
	\caption{Figure (a) shows the orientation induced by the positive orientation of $[0,1,2]$ on its boundary. Figure (b) shows the 1-cell $[0,1]$ along with oriented $1$-cells $\sigma$ such that $[0,1] \stackrel{X}{\sim} \sigma$ }\label{fig:exa orientation}
\end{figure}
For a $j$-cell $\sigma$, we define the boundary of $\sigma$ as $\partial \sigma= \{ \omega \subset \sigma : \text{dim}(\omega)=j-1\}$. An oriented $j$-cell $\sigma=\left[\sigma^{0}, \ldots, \sigma^{j}\right]$ induces an orientation on $\partial \sigma$ as follows: the cell $\left\{\sigma^{0}, \ldots, \sigma^{i-1}, \sigma^{i+1}, \ldots, \sigma^{j}\right\}$ is oriented as $(-1)^i\left[\sigma^{0}, \ldots, \sigma^{i-1}, \sigma^{i+1}, \ldots, \sigma^{j}\right]$, where $-\omega=\bar{\omega}$.


We use the following neighbouring relation for oriented cells introduced in \cite{ori_Spectrum_homology}. For a random simplicial complex $X$ of dimension $d$ and $\sigma, \sigma^\prime \in K_{\pm}^{d-1}$, we denote $\sigma \stackrel{X}{\sim} \sigma^{\prime}$ if there exists an oriented $d$-cell $\tau \in X$ such that both $\sigma$ and $\bar{\sigma^{\prime}}$ are in the boundary of $\tau$ as oriented cells. See Figure \ref{fig:exa orientation} (b) for an example.

\begin{definition}\label{defn:adjacency matrix}
	For a simplicial complex $X$ of dimension $d \geq 2$ with a complete $(d-1)$-skeleton, define
	\begin{enumerate}[(i)]
		\item Unsigned adjacency matrix: As the ${n \choose d} \times {n \choose d}$ matrix indexed by elements of $X^{d-1}$ with
		\begin{equation*}
			A_{\sigma, \sigma^\prime}= 
			\begin{cases}
				&1 \quad\text{if} \,\,\sigma \cup \sigma^\prime \in X, \\
				&0 \quad \text{otherwise.}
			\end{cases}
		\end{equation*}
		\item Signed adjacency matrix: As the ${n \choose d} \times {n \choose d}$ matrix indexed by elements of $X^{d-1}_+$ with
		\begin{equation*}
			A^+_{\sigma, \sigma^\prime}= 
			\begin{cases}
				&1 \quad\text{if} \,\,\sigma \stackrel{X}{\sim} \sigma^\prime , \\
				-&1 \quad \text{if}\,\, \sigma \stackrel{X}{\sim} \bar{\sigma^\prime} , \\
				&0 \quad \text{otherwise.}
			\end{cases}
		\end{equation*}
	\end{enumerate}
\end{definition}

The definition used here is rather simplistic. Nevertheless it serves our purpose as we are interested in studying the adjacency matrix as a random matrix.  For equivalent definitions of adjacency matrices and their connections with the Laplacian operator, see \cite{antti_ron},\cite{ori_ron} or \cite{uli_wagner}.

Observe that for all $j \geq 1$, there exists a bijection between $X^{d-1}$ and $X_+^{d-1}$. Therefore the entries of $A^+$ can be indexed by elements of $X^{d-1}$. Hereafter, we consider $A^+$ as a matrix indexed by elements of $X^{d-1}$. For the rest of the paper, $A_n^+$ and $A_n$ would denote the signed adjacency and unsigned adjacency matrices of the random simplicial complex $Y_d(n,p(n))$, respectively. Our first result, establishes the existence of limiting spectral distribution of adjacency matrices of the Linial-Meshulam complex when $np \rightarrow \lambda>0$.

\begin{theorem}\label{thm: existence_LSD}
	For $d \geq 2$ and $n \in \N$, let $F_{A_n}$ denote the empirical spectral distribution of unsigned adjacency matrices of $Y_d(n,p)$ and $F_{A_n^+}$ denote the empirical spectral distribution of signed adjacency matrix of $Y_d(n,p)$. Then there exists a unique probability distribution $\Gamma_d(\lambda) $ such that as $np \rightarrow \lambda >0$,
	\begin{enumerate}[(i)]
		\item $F_{A_n} \stackrel{D}{\rightarrow} \Gamma_d(\lambda)$ almost surely,
		\item  $F_{A_n^+}\stackrel{D}{\rightarrow}-\Gamma_d(\lambda)$ almost surely,
	\end{enumerate} 
	where $-\Gamma_d(\lambda)(x):=1-\Gamma_d(\lambda)(-x\_)$ for all $x \in \mathbb{R}$ and $\Gamma_d(\lambda)(y\_)$ is the left limit $\lim_{y_n\uparrow y}\Gamma_d(\lambda)(y_n)$.
\end{theorem}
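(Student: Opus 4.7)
The plan is to use the method of moments to establish existence of $\Gamma_d(\lambda)$ for $A_n$, and then deduce the reflection identity for $A_n^+$ from a sign analysis of the closed-walk contributions. Throughout, let $N_d = \binom{n}{d}$.

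For part (i), I would expand $\tfrac{1}{N_d}\mathbb{E}[\tr(A_n^k)]$ as a normalised sum over closed walks $W=(\sigma_0,\ldots,\sigma_k=\sigma_0)$ of length $k$ in the line graph of $Y_d(n,p)$, each weighted by its realisation probability $p^{|\{\tau_1,\ldots,\tau_k\}|}$ (with $\tau_i$ a $d$-cell containing both $\sigma_{i-1}$ and $\sigma_i$) by independence of the $d$-cells. Classifying walks by the isomorphism type of the \emph{block graph} (whose nodes are the distinct $\tau_i$ and whose edges record shared $(d-1)$-faces), I would show that when $np \to \lambda$ only walks whose block graph is a tree contribute to the leading order $N_d$: each additional block cycle costs one extra $d$-cell without producing enough new vertices and so carries an $n^{-1}$ factor. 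Enumerating the surviving walks produces explicit finite constants $m_k = m_k(d,\lambda)$, and a crude bound $|m_k| \leq (Cd\lambda)^k$ (from counting walks on the infinite $(d+1)$-regular block tree of $K_{d+1}$'s) verifies Carleman's condition, so $\{m_k\}$ determines a unique probability measure $\Gamma_d(\lambda)$. Almost sure convergence is then obtained by estimating $\var(\tr(A_n^k))$ as a sum over pairs of walks sharing at least one common $d$-cell, which yields $\var\bigl(\tfrac{1}{N_d}\tr(A_n^k)\bigr) = O(n^{-1})$; a standard Borel--Cantelli argument along a polynomial subsequence (combined with bounded-difference concentration coming from the independence of the $d$-cells) promotes convergence of the expected moments to almost sure convergence of $F_{A_n}$ itself.

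For part (ii), when $\sigma$ and $\sigma'$ share a unique $d$-cell $\tau$ the signed entry can be written as $A^+_{\sigma,\sigma'} = -[\sigma:\tau]\,[\sigma':\tau]$, with $[\sigma:\tau]\in\{\pm 1\}$ the incidence coefficient of $\sigma$ in $\partial\tau$. The contribution of a closed walk $W$ to $\tr((A_n^+)^k)$ is therefore
\[
(-1)^k \prod_{j=0}^{k-1} [\sigma_j:\tau_j]\,[\sigma_j:\tau_{j+1}] \cdot \prod_{i=1}^k \mathbf{1}(\tau_i \in Y),
\]
with indices taken cyclically. On any walk whose block graph is a tree, a consistent choice of orientations across adjacent blocks makes the product of incidences telescope to $+1$ (equivalently, such a walk admits a diagonal conjugation $A_n^+ = -D A_n D^{-1}$ with $D=\mathrm{diag}(\pm 1)$ restricted to the cells it visits). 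Hence the signed contribution of every dominant walk equals $(-1)^k$ times its unsigned contribution, yielding $m_k(A_n^+) \to (-1)^k m_k$, which is exactly the moment sequence of $-\Gamma_d(\lambda)$; the variance argument transfers verbatim.

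The main obstacle is the detailed combinatorial bookkeeping shared by the two parts: correctly identifying the dominant block-tree walks, showing that block-cyclic walks and pathological coincidences (two distinct $d$-cells sharing the same $(d-1)$-face, which forces $A_{\sigma,\sigma'}\geq 2$ and breaks the unique-$\tau$ picture used in the sign formula) contribute only at lower order, and verifying the sign telescope rigorously on every dominant walk. The telescoping is delicate because the positive orientation on $X^{d-1}$ induced from the ordering of $V$ need not be consistent across distinct $d$-cells, but on a block tree any inconsistency can always be absorbed into the diagonal conjugation $D$ above, which is precisely why the signed and unsigned spectra become reflections of each other in the limit.
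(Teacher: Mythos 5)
Your proposal is correct in outline and reaches the theorem by the same moment-method philosophy, but it takes a genuinely different route from the paper at two points. First, you expand $\operatorname{tr}(A_n^k)$ for the \emph{uncentred} matrices directly, whereas the paper first proves the LSD for the centred matrices $B_n=A_n-\mathbb{E}A_n$ and $B_n^+$ (Theorem \ref{thm: centred LSD}), where centering kills every word traversing some $d$-cell exactly once, and then transfers to $A_n$, $A_n^+$ via the rank inequality (Theorem \ref{thm:rank_inequality}) together with the explicit spectra of the complete-complex adjacency matrices (Lemmas \ref{lemma:eig_unsigned complete} and \ref{lemma: eig_A+ complete}), which show $\operatorname{Rank}(\mathbb{A}_n+dI)=O(n^{d-1})$. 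Your direct expansion avoids those eigenvalue computations at the cost of having to control the larger family of walks with singly-traversed $d$-cells; your dominance criterion ($|\operatorname{supp}_0(w)|=|\operatorname{supp}_d(w)|+d$, i.e.\ block-tree structure, which is the paper's Lemma \ref{claim: supp_0,supp_d} plus display (\ref{eqn:iff W_s^k})) handles this, since the unsigned weight $p^{|\operatorname{supp}_d(w)|}$ and your sign argument are both insensitive to traversal multiplicities. Second, for part (ii) you prove the sign identity by a gauge/diagonal-conjugation argument on the block tree, which is a clean alternative to the paper's Lemma \ref{lemma: sgn(w)=(-1)^k} (proved there via entry/exit times and two rather delicate claims); your observation that the orientation inconsistencies between adjacent blocks can always be absorbed into $D$ on a tree is exactly right, because two distinct $d$-cells share at most one $(d-1)$-face, so each block-tree edge imposes a single satisfiable constraint. (Incidentally, the ``pathological coincidence'' you worry about cannot occur: two $(d-1)$-cells $\sigma,\sigma'$ with $|\sigma\cup\sigma'|=d+1$ determine the unique $d$-cell $\sigma\cup\sigma'$, so $A_{\sigma\sigma'}\in\{0,1\}$ always.) Your variance bound $O(n^{-1})$ forces the subsequence-plus-concentration detour; the paper's sharper bound $O(n^{-d})$ (Lemma \ref{lemma: sum Var B_n}), which also holds for the uncentred trace, makes plain Borel--Cantelli suffice for $d\geq 2$.

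One concrete error must be repaired: the claimed bound $|m_k|\leq (Cd\lambda)^k$ is false. It would force $\Gamma_d(\lambda)$ to have compact support, contradicting Proposition \ref{prop:Gamma unbounded}; the heuristic behind it (walk counts on a fixed $(d+1)$-regular block tree) uses the wrong limit object, since the local limit has Poisson-type, hence unbounded, offspring numbers. The correct count of equivalence classes of dominant walks grows like $(d(k+d))^k$ (cf.\ Lemma \ref{claim:size W_k,s}), so $m_{2k}^{1/2k}=O(k)$; Carleman's condition $\sum_k m_{2k}^{-1/2k}=\infty$ still holds, so the uniqueness of $\Gamma_d(\lambda)$ survives, but the exponential bound as stated cannot.
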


We also establish several properties of the limiting spectral distribution of the adjacency matrices.

\begin{proposition}\label{prop:Gamma unbounded}
	For all $\lambda>0$, the distribution $\Gamma_d(\lambda)$ is unbounded and is not symmetric around zero.
\end{proposition}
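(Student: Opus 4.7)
The plan is to derive both properties by computing moments of $\Gamma_d(\lambda)$. Throughout I would invoke the moment-convergence identity $\int x^k\, d\Gamma_d(\lambda) = \lim_{n\to\infty} \frac{1}{\binom{n}{d}} \E[\Tr A_n^k]$ for each fixed $k$, which I expect to follow from the moment-method ingredients underlying Theorem \ref{thm: existence_LSD}.

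For \emph{unboundedness}, I would lower-bound the $(2k)$-th moment using ``out-and-back'' closed walks. For any $(d-1)$-cell $\sigma$ of $Y = Y_d(n,p)$, the diagonal entry $(A_n^{2k})_{\sigma,\sigma}$ counts closed walks of length $2k$ at $\sigma$ in the line graph, and at least $\deg(\sigma)^k$ of these come from $k$-fold repetition of a single step-to-neighbour-and-back excursion. Since $\deg(\sigma) = d\cdot|\{v \notin \sigma : \sigma \cup \{v\} \in Y\}|$, and the bracketed count is $\mathrm{Bin}(n-d, p)$, converging in all moments to $N \sim \Poi(\lambda)$, this gives $\int x^{2k}\, d\Gamma_d(\lambda) \geq d^k\, \E[N^k]$. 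Since $\E[N^k] \geq m^k\,\P(N = m)$ for every fixed $m \in \N$, one has $\liminf_k (\E[N^k])^{1/k} \geq m$ for every $m$, so $(\int x^{2k}\, d\Gamma_d(\lambda))^{1/(2k)} \to \infty$. This contradicts support lying in any bounded interval $[-M, M]$.

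For \emph{asymmetry}, I would compute the third moment explicitly. Since $\Tr A_n^3$ equals six times the number of triangles in the line graph of $Y$, I would classify each triangle $\{\sigma_0, \sigma_1, \sigma_2\}$ by the underlying $d$-cells of $Y$. Inclusion-exclusion applied to the pairwise intersections $|\sigma_i \cap \sigma_j| = d-1$ forces $|\sigma_0 \cap \sigma_1 \cap \sigma_2| \in \{d-2, d-1\}$, leaving only two cases: (a) $\sigma_0, \sigma_1, \sigma_2$ are three $(d-1)$-faces of a single $d$-cell $\tau \in Y^d$, or (b) they share a common $(d-1)$-element spine $S$ with $\sigma_i = S \cup \{v_i\}$ and are supported on three distinct $d$-cells $S \cup \{v_i, v_j\}$ in $Y$. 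Case (a) contributes $6\binom{d+1}{3}\binom{n}{d+1} p / \binom{n}{d} \to \lambda d(d-1)$. Case (b) has at most $O(n^{d+2})$ configurations weighted by $p^3 = O(n^{-3})$, so its normalised contribution is $O(1/n) \to 0$. Hence $\int x^3\, d\Gamma_d(\lambda) = \lambda d(d-1) > 0$ for every $d \geq 2$ and $\lambda > 0$, which precludes symmetry about zero.

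The hardest point is the moment-convergence identity itself: because $\Gamma_d(\lambda)$ has unbounded support (which is exactly part~(i) of what is being proved), weak convergence does not automatically transfer moments, and uniform integrability of $x^k$ against $F_{A_n}$ must be verified. For each fixed $k$, this should follow either from standard combinatorial bounds on expected closed-walk counts in $Y_d(n,p)$, or from the local-weak-limit interpretation furnished by Theorem \ref{thm: weak limit}, where the $k$-th moment equals $\E\langle \delta_{\mathrm{root}}, A_\infty^k \delta_{\mathrm{root}}\rangle$ on the $d$-block Galton-Watson graph, whose Poisson-type degree tails ensure finiteness of all moments of fixed order.
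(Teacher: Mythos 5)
Your two computations land on exactly the targets the paper uses: for asymmetry, the third moment $d(d-1)\lambda>0$ (the paper computes the same constant, via $\mathbb{E}[B_{\sigma_1\sigma_2}B_{\sigma_2\sigma_3}B_{\sigma_3\sigma_1}]$ for the \emph{centred} matrix, where your case~(b) vanishes identically because each of the three distinct $d$-cells is traversed once and $\mathbb{E}[\chi-p]=0$, rather than by your $O(1/n)$ count); for unboundedness, a superexponential lower bound on even moments coming from out-and-back excursions at the root (the paper encodes the same idea combinatorially, as the explicit word family $B_k\subseteq\widetilde{\mathcal{W}}_{k+d}^{4k}$ of $2k$ length-two excursions from a fixed $\sigma_1$, giving $\beta_{4k}^{1/k}\geq d^2k\lambda\to\infty$, whereas you phrase it as $\beta_{2k}\geq d^k\,\mathbb{E}[\operatorname{Poi}(\lambda)^k]$ via the Binomial degree). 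So the route is essentially the paper's, dressed in probabilistic rather than word-counting language.

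The one real obligation your version incurs, which you correctly flag but do not discharge, is the identity $\int x^k\,d\Gamma_d(\lambda)=\lim_n \binom{n}{d}^{-1}\mathbb{E}\operatorname{Tr}A_n^k$ for the \emph{uncentred} matrix. In the paper this issue does not arise: $\Gamma_d(\lambda)$ is \emph{defined} as the unique measure with moment sequence $\{\beta_k(\lambda)\}$, where $\beta_k=\lim_n\binom{n}{d}^{-1}\mathbb{E}\operatorname{Tr}B_n^k$, so $\int x^k\,d\Gamma_d(\lambda)=\beta_k$ tautologically and no uniform-integrability argument is needed (your worry that weak convergence alone only gives the Fatou inequality $\int x^{2k}d\Gamma\leq\liminf_n\int x^{2k}dF_{A_n}$, i.e.\ the wrong direction for your lower bound, is well founded). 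To close your argument you would either (a) redo the word expansion for $A_n$ and show that the additional closed words with some $N_w(\tau)=1$ contribute $o(1)$ after normalization (true, but it is an extra lemma: one must check that closed words with $|\operatorname{supp}_d(w)|=|\operatorname{supp}_0(w)|-d$ automatically satisfy $N_w(\tau)\geq 2$, so that the surviving terms are again exactly $\beta_k$), or (b) simply run both of your computations against $B_n$ and the sequence $\beta_k$ directly, as the paper does; note that your degree-moment bound survives centring with only cosmetic changes since $\mathbb{E}[(\chi-p)^2]=p(1-p)\sim\lambda/n$. With either repair the proof is complete; as written, the step from prelimit moment bounds to moments of $\Gamma_d(\lambda)$ is asserted rather than proved.
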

\begin{proposition}\label{prop:Gamma_semicircle}
	As $\lambda \rightarrow \infty$, the distribution $\frac{1}{\sqrt{\lambda d}}\Gamma_d(\lambda)$ converge in distribution to the standard semi-circle law, where $\frac{1}{\sqrt{\lambda d}}\Gamma_d(\lambda)(x):= \Gamma_d(\lambda)\left(\sqrt{\lambda d}x\right)$ for all $x \in \mathbb{R}$.
\end{proposition}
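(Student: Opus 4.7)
The plan is to prove convergence of moments via a direct trace expansion, then invoke that the standard semicircle law $\mu_{sc}$ is determined by its moments (it is compactly supported). Denoting by $m_k(\Gamma_d(\lambda))$ the $k$-th moment of $\Gamma_d(\lambda)$, and recalling that the $k$-th moment of $\mu_{sc}$ equals $C_{k/2}$ for even $k$ and $0$ for odd $k$, it suffices to prove
\be
(\lambda d)^{-k/2}\, m_k(\Gamma_d(\lambda)) \longrightarrow \mathbf{1}(k\text{ even})\cdot C_{k/2} \qquad \text{as } \lambda \to \infty.
\ee
By Theorem~\ref{thm: existence_LSD} together with a routine variance bound on $\frac{1}{\binom{n}{d}}\Tr(A_n^k)$, one has
\be
m_k(\Gamma_d(\lambda)) \;=\; \lim_{n \to \infty} \frac{1}{\binom{n}{d}}\, \E\bigl[\Tr(A_n^k)\bigr].
\ee

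Expanding the trace gives a sum over closed walks $(\sigma_0, \sigma_1, \dots, \sigma_{k-1}, \sigma_0)$ in $X^{d-1}$ with $|\sigma_i \cap \sigma_{i+1}| = d-1$; each valid walk contributes $p^T$, where $T := |\{\tau_i : \tau_i = \sigma_i \cup \sigma_{i+1},\, 0 \le i < k\}|$ counts the distinct $d$-cells traversed. Grouping walks by their isomorphism type (forgetting vertex labels), a shape using $V$ distinct vertices and $T$ distinct $d$-cells contributes at order $\lambda^T\, n^{V - T - d}$ after dividing by $\binom{n}{d} \sim n^d/d!$. Only shapes with $V = T + d$ survive as $n \to \infty$, and a short inductive argument shows that $V = T + d$ holds precisely when the distinct visited $(d-1)$-cells, with the $d$-cells as edges, form a tree. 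Consequently $m_k(\Gamma_d(\lambda))$ is a polynomial in $\lambda$ whose degree equals the maximal $T$ realizable by such a tree walk of length $k$.

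For even $k$ the maximum is $T = k/2$, attained precisely when each $d$-cell is traversed exactly twice, i.e.\ when the walk performs a depth-first double-traversal of a rooted plane tree with $k/2$ edges. Rooted plane trees with $k/2$ edges are enumerated by $C_{k/2}$, and at each tree-edge (a $d$-cell) the child $(d-1)$-face may be chosen among the $d$ boundary faces of that $d$-cell other than the parent face, contributing a factor $d^{k/2}$; together with the factor $\lambda$ per $d$-cell (from the single free new vertex contributing $(n - O(1))\cdot p \to \lambda$), this yields the leading term $C_{k/2}(\lambda d)^{k/2}$. For odd $k$, any closed walk of odd length must traverse an odd cycle; in the clique-tree structure of the line graph (blocks $K_{d+1}$ glued at single vertices) odd cycles exist only inside a single block, forcing at least one $d$-cell to absorb three walk-steps rather than two, whence $T \le (k-1)/2$ and $m_k(\Gamma_d(\lambda)) = O(\lambda^{(k-1)/2})$. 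Dividing by $(\lambda d)^{k/2}$ and sending $\lambda \to \infty$ then yields the semicircle moments, completing the proof.

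The main obstacle is the combinatorial identification of the leading coefficient. One must verify carefully that (i) only walks whose underlying $(d-1)$-cell incidence structure is a tree survive at leading order in $n$; (ii) maximal-$T$ tree walks are in bijection with rooted plane trees with $k/2$ edges, each enriched by a choice of one of $d$ sibling boundary faces at every tree-edge; and (iii) the sub-leading terms in $\lambda$---both from shapes with smaller $T$ and from walks performing extra intra-block motion---are controlled by finitely many polynomial terms so that the limit $\lambda \to \infty$ can be taken cleanly.
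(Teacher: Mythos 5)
Your proposal follows the same basic strategy as the paper: the method of moments, the observation that the $k$-th moment of $\Gamma_d(\lambda)$ is a polynomial in $\lambda$ of degree $\lfloor k/2\rfloor$ so that only the top term survives division by $(\lambda d)^{k/2}$, and the identification of the leading coefficient as $\mathcal{C}_{k/2}d^{k/2}$ via tree walks in which every $d$-cell is traversed exactly twice and each descent offers $d$ choices of child face. The paper, however, starts from the already-established formula $\beta_k(\lambda)=\sum_{s}|\widetilde{\mathcal{W}}_s^k|\lambda^{s-d}$ for the moments (derived from the \emph{centred} matrix $B_n$, where any walk traversing some $d$-cell exactly once contributes zero), notes that $|\widetilde{\mathcal{W}}_s^k|$ is a constant in $\lambda$ so all terms with $s<k/2+d$ are $O(\lambda^{-1/2})$ after normalization, and imports the extremal count $|\mathcal{W}^k_{k/2+d}|=\mathcal{C}_{k/2}d^{k/2}$ from Lemma 3.11 of \cite{antti_ron}. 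Your route re-derives the moments from the uncentred trace $\frac{1}{\binom{n}{d}}\E[\Tr(A_n^k)]$, which buys self-containedness but creates two obligations that the paper's setup avoids and that you assert rather than prove.

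First, the identity $m_k(\Gamma_d(\lambda))=\lim_n \frac{1}{\binom{n}{d}}\E[\Tr(A_n^k)]$ does not follow from Theorem \ref{thm: existence_LSD} alone: weak convergence of $F_{A_n}$ gives nothing about moments, and $A_n-B_n=p\,\mathbb{A}_n$ has operator norm of order $1$ (not $o(1)$), so one must argue at the level of traces that a bounded-norm, rank-$O(n^{d-1})$ perturbation changes $\frac{1}{N}\Tr(\cdot)^k$ by $o(1)$; the paper sidesteps this entirely by defining $\Gamma_d(\lambda)$ through the $\beta_k$ of the centred matrix. Second, and more substantively, because the uncentred entries do not annihilate singly-traversed $d$-cells, your bound $T\le k/2$ for surviving shapes (those with $V=T+d$) is not automatic: a priori $\sum_\tau N_w(\tau)=k$ permits $T$ up to $k$ if some cells have multiplicity one. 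That every $d$-cell in a surviving shape is traversed at least twice is exactly the content of Claim 2 in the paper's proof of Lemma \ref{lemma: sgn(w)=(-1)^k} (the first-entry face equals the last-exit face), and it requires a genuine argument using the hypertree structure forced by $V=T+d$; your appeal to ``depth-first double-traversal'' and ``odd cycles only inside a block'' identifies the right picture but leaves this step, which you yourself flag as the main obstacle, unverified. With those two points filled in, the enumeration and the passage to the limit $\lambda\to\infty$ are correct and agree with the paper's conclusion.
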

In \cite{antti_ron}, it was shown that  the operator norm of $B_n^+$ under the normalization $(np(1-p))^{-1/2}$ converge almost surely to $\sqrt{d}$ as $np(1-p)$ converges to infinity. The following proposition gives the convergence of Frobenius norm for centred adjacency matrices of Linial-Meshulam complex when $np\rightarrow \lambda >0$. We use the notation $||A||_F$ to denote the Frobenius norm of a matrix $A$.
\begin{proposition}\label{prop:frobenius norm}
	For $d \geq 2$,$\lambda>0$ and $np \rightarrow \lambda$, the Frobenius norms $||B_n||_F$ and $||B_n^+||_F$ converge to $\sqrt{d\lambda}$ almost surely.
\end{proposition}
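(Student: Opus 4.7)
The plan is to reduce both $\|A_n\|_F^2$ and $\|A_n^+\|_F^2$ to a constant multiple of the number of $d$-cells in $Y_d(n,p)$, and then apply a one-shot binomial concentration. Setting $N=\binom{n}{d}$, the target is $\|A_n\|_F^2/N \to d\lambda$ almost surely; the stated conclusion $\|B_n\|_F \to \sqrt{d\lambda}$ (and the corresponding one for $B_n^+$) then follows from any reasonable normalized/centred definition of $B_n$, because centring by $\mathbb{E}A_n$ perturbs $\|A_n\|_F^2/N$ only by $O(p)\to 0$: indeed $\|\mathbb{E}A_n\|_F^2/N = d(n-d)p^2 = O(p)$.

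The first step is the identity $\|A_n\|_F^2 = d(d+1)\,|Y^d|$. Each $d$-simplex $\tau\in Y^d$ has $d+1$ faces of dimension $d-1$, and every ordered pair $(\sigma,\sigma')$ of distinct such faces contributes an entry $A_{\sigma,\sigma'}=1$; these are the only nonzero off-diagonal entries of $A_n$, and there are $d(d+1)$ such ordered pairs per $d$-simplex. The analogous identity $\|A_n^+\|_F^2 = d(d+1)\,|Y^d|$ holds: the entries of $A_n^+$ lie in $\{-1,0,1\}$, and since the orientation-induced signs merely flip the sign of entries that were already nonzero in $A_n$, the off-diagonal supports coincide, giving $\|A_n^+\|_F^2 = \|A_n\|_F^2$. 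So the signed case reduces to the unsigned one.

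Since $|Y^d|$ is a $\mathrm{Binomial}\bigl(\binom{n}{d+1},p\bigr)$ random variable, the identity $\binom{n}{d+1}/N = (n-d)/(d+1)$ yields $\mathbb{E}\bigl[\|A_n\|_F^2/N\bigr] = d(n-d)p \to d\lambda$ and $\var\bigl(\|A_n\|_F^2/N\bigr) = [d(d+1)]^2\binom{n}{d+1}p(1-p)/N^2 = O(n^{-d})$. For $d\geq 2$ we have $\sum_n n^{-d}<\infty$, so Chebyshev's inequality combined with Borel--Cantelli yields $\|A_n\|_F^2/N \to d\lambda$ almost surely, and hence $\|B_n\|_F \to \sqrt{d\lambda}$ and $\|B_n^+\|_F \to \sqrt{d\lambda}$ almost surely. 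The whole argument is a short concentration bound once the identity $\|A_n\|_F^2 = d(d+1)|Y^d|$ is established; there is no serious obstacle, and the only point requiring a moment's care is the passage from $A_n$ to $A_n^+$, handled by the support-coincidence observation above.
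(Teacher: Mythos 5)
Your proof is correct, but it takes a genuinely different route from the paper's. The paper observes that $\frac{1}{N}\|B_n\|_F^2=\frac{1}{N}\Tr(B_n^2)=m_2(B_n)$ is exactly the second moment of the empirical spectral distribution, computes the single combinatorial quantity $|\widetilde{\mathcal{W}}_{d+1}^{2}|=d$ so that $\beta_2(\lambda)=d\lambda$, and then simply invokes the almost-sure moment convergence already established in Theorem \ref{thm:EESD_centred_unsigned} and Lemma \ref{lemma: sum Var B_n} (the signed case is absorbed by noting $\Tr\bigl((B_n^+)^2\bigr)=\Tr(B_n^2)$, the same support/sign-cancellation observation you make). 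You instead bypass the word-counting machinery entirely: the identity $\|A_n\|_F^2=d(d+1)\,|Y^d|$ reduces everything to a single binomial random variable, and Chebyshev plus Borel--Cantelli (using that the variance is $O(n^{-d})$, summable for $d\geq 2$) gives the almost-sure limit directly, with a clean perturbation step to pass from $A_n$ to the centred $B_n$. Your argument is self-contained and more elementary; the paper's is a two-line corollary of machinery it has already paid for. The only point worth tightening in your write-up is the passage from $\|A_n\|_F$ to $\|B_n\|_F$: you bound $\|\mathbb{E}A_n\|_F^2/N=d(n-d)p^2\to 0$ but do not mention the cross term $-2\langle A_n,\mathbb{E}A_n\rangle_F/N=-2p\|A_n\|_F^2/N$; this is harmless (it is $O(p)$ as well, and in any case the triangle inequality $\bigl|\,\|B_n\|_F-\|A_n\|_F\,\bigr|\leq\|\mathbb{E}A_n\|_F$ settles it), but it should be stated. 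Both proofs, like the proposition itself, implicitly read $\|\cdot\|_F$ as normalized by $\sqrt{N}$, which you flag explicitly.
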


The other major results in this paper deal with the line graph of the unsigned adjacency matrix of $Y_d(n,p)$. We deal with these in the following subsection.
\subsection{Line graph of $Y_d(n,p)$}\label{subsec:line grap} 
In this subsection, we introduce the necessary preliminaries on local weak limit and line graph, and state our main results. First, we define the space of unlabelled rooted graphs. 

A rooted graph is a pair $(G,o)$ where $G$ is a graph with a distinguished vertex $o$, called the root. Two rooted graphs $(G_1,o_1)$ and $(G_2,o_2)$ with $G_1=(V_1,E_1)$ and $G_2=(V_2,E_2)$ are said to be isomorphic if there exists a bijection $\sigma : V_1 \rightarrow V_2$ such that $\sigma(o_1)=o_2$ and $\sigma(G_1)=G_2$ where $\sigma$ acts on $E_1$ as $\sigma(\{u,v\})=\{\sigma(u),\sigma(v)\}$. We denote the isomorphism between $(G_1,o_1)$ and $(G_2,o_2)$ as $(G_1,o_1) \simeq (G_2,o_2)$.

A graph $G=(V,E)$ is called locally finite if $\operatorname{deg}_G(v) < \infty$ for all $v \in V$. Clearly, every finite graph is locally finite. In this section, unless stated otherwise, every graph considered is locally finite. We also maintain the convention that on considering a sequence of graphs $(G_i)_{i \in \mathbb{N}}$, we would use $V_i$ and $E_i$ to denote the vertex set and edge set of $G_i$, respectively. 

Isomorphism of rooted graphs is an equivalence relation on the class of all rooted locally finite connected graphs and an equivalence class under this relation is called an unlabelled rooted graph. We denote the space of all unlabelled rooted graphs by $\mathcal{G}^*$. 

For a connected graph $G$ and vertices $u,v \in V(G)$, the graph distance between $u$ and $v$, $\operatorname{dist}(u,v)$, is defined as the number of edges in a shortest path between $u$ and $v$. For a  rooted graph $(G,o)$ and $t \geq 0$, we consider the vertex set 
$V_t^\prime=\{v \in V(G): \operatorname{dist}(o,v)\leq t\}$. The subgraph induced by $(G,o)$ on $V_t^\prime$ is denoted by $(G,o)_t$. Note that if $(G_1,o_1)\simeq (G_2,o_2)$, then $(G_1,o_1)_t\simeq (G_2,o_2)_t$ for all $t \geq 0$, and therefore for $g \in \mathcal{G^*}$, the unlabelled graph $(g)_t$ is well-defined as the equivalence class of $(G,o)_t$ for some $(G,o)$ belonging to the equivalence class $g$. 

It is known that $\mathcal{G}^*$ is a metric space with the metric
\begin{equation}\label{eqn: metric G*}
	d_{\mathcal{G}^*}(g,h)= \frac{1}{1+T_{g,h}} \quad \text{where} \quad T_{g,h}= \sup\{t \in \mathbb{N}: (g)_t \simeq (h)_t\}.
\end{equation}
It follows from (\ref{eqn: metric G*}) that the distance between any two unlabelled graphs is bounded above by 1 and for $\epsilon >0$ and $g \in \mathcal{G^*}$, the open ball $B_\epsilon(g)$ is given by
\begin{align*}
	B_\epsilon(g)&=\mathcal{G^*} \text{ if }\epsilon \geq 1 \text{ and }\\
	B_\epsilon(g)&=\{h:(h)_{\lceil\frac{1}{\epsilon}\rceil} \simeq (g)_{\lceil\frac{1}{\epsilon}\rceil}\} \text{ for } \epsilon < 1.
\end{align*}

The resulting metric topology is known as the local topology on $\mathcal{G^*}$ and clearly, the local topology on $\mathcal{G}^*$ is the smallest topology such that for any $g \in \mathcal{G}^*$ and any integer $t \geq 1$, the map $f\left((G,o)\right)= \mathbbm{1}_{(G,o)_t \simeq g}$ is continuous. Under local topology, $\mathcal{G}^*$ is complete and separable. For proofs of these facts, see  \cite{coursSRG} or \cite{remco_van}.
\begin{definition}
	For a finite graph $G=(V,E)$, let $G(v)$ denote the connected component of $G$ containing $v \in V$. We define the probability measure $U(G)$ on $\mathcal{G}^*$ as the law of equivalence class of the rooted graph $\left( G(o),o \right)$ where $o$ is chosen uniformly from $V$. 
\end{definition}
In other words, for a finite graph $G$, $U(G)$ is given by
\begin{equation}\label{eqn: U(G)}
	U(G)=\frac{1}{|V|} \sum_{o \in V} \delta_{[(G(o),o)]},
\end{equation}
where $[(G(o),o)]$ is the equivalence class of $\left(G(o),o\right)$ in $\mathcal{G}^*$ and $\delta_{[(G(o),o)]}$ denote the Dirac measure concentrated at $[(G(o),o)]$.

We denote the set of all probability measures on $\mathcal{G}^*$ with $\mathcal{P}(\mathcal{G}^*)$. For $\mu_n, \mu \in \mathcal{P}(\mathcal{G}^*) $, we say $\mu_n$ converges to $\mu$ weakly if 
\begin{equation*}
	\int_{\mathcal{G}^*}h\left( G,o \right) d\mu_n \rightarrow \int_{\mathcal{G}^*}h\left( G,o \right) d\mu,
\end{equation*}
for all bounded continuous functions $h$ on $\mathcal{G}^*$.
\begin{definition}
	Let $(G_n)_{n \geq 1}$ be a sequence of finite graphs, we say that the probability measure $\rho \in \mathcal{P}(\mathcal{G}^*)$ is the local weak limit of $G_n$ if $U(G_n)$ converges weakly to $\rho$.
\end{definition}
We shall use $A_n$ to denote the unsigned adjacency matrix of the Linial-Meshulam complex   $Y_d(n,p)$. We define $U_n= K^d(n)$ and $V_n=K^{d-1}(n)$, where $K^j(n)$ is the set of all $j$-cells on $n$ vertices.

First, we define a graph with same adjacency matrix as the unsigned adjacency matrix of the Linial-Meshulam complex $Y_d(n,p)$. This graph is called the line graph of the simplicial complex and is a subgraph of the intersection graph on the family of all $(d-1)$ cells.
\begin{definition}\label{defn: graph Gn}
	For $d \geq 2, n \geq d+1$ and $0 < p < 1$,  consider the random simplicial complex $Y_d(n,p)$ with unsigned adjacency matrix $A_n$. The line graph of $Y_d(n,p)$ is defined as the graph $G_n =(V_n, E_n)$ where $V_n=K^{d-1}(n)$ and for $v, v^{\prime} \in V_n$, $\{v,v^{\prime}\} \in E_n$ if $(A_n)_{vv^{\prime}}=1$. Notice that adjacency matrix of the line graph of $Y_d(n,p)$ is $A_n$, the unsigned adjacency matrix of $Y_d(n,p)$. 
\end{definition}

Next, we introduce a special type of random graph known as $d$-block Galton-Watson graph. A Galton-Watson process with offspring distribution $P$ is defined as follows: Let $\{X_{i,n}\}$ be a sequence of i.i.d random variables with distribution, $P$. Define
\begin{align*}
	Z_0 &\equiv 1 \text{ and}\\
	Z_{n+1} &=\sum_{i=1}^{Z_n} X_{i,n} \text{ for all } n \geq 0.
\end{align*}
 The process $\{Z_n\}_{n \geq 1}$ is called a Galton-Watson process. Here $Z_n$ denotes the number of individuals at the $n$-th generation and $X_{i,n}$ represents the number of offsprings of $i$. For a vertex $i$ of $n$-th generation, we use the notation $(i,1),(i,2),\ldots,(i,X_{i,n})$ to denote the offsprings of $i$. A Galton-Watson tree with offspring distribution $P$, is constructed by considering a Galton-Watson process process $Z_n$ with offspring distribution $P$, and then joining all vertices to their offsprings via edges.
For $d \in \N$, a $d$-block Galton-Watson graph with offspring distribution $dP$ is constructed by taking $X_{i,n} \sim dP$ and constructing the following edges:
\begin{enumerate}[(i)]
	\item Edges exist between vertices and their offsprings.
	\item There exist an edge between $(i,j_1)$ and $(i,j_2)$, if $\lfloor \frac{j_1-1}{d} \rfloor=\lfloor \frac{j_2-1}{d} \rfloor$.
	\item No other edges exist in the graph.
\end{enumerate}
$d$-block Galton-Watson graph will be defined in more concrete fashion in Section \ref{sec: local_weak}. For a visual representation of $d$-block Galton-Watson graph see Figure \ref{fig: d-block graph}. Our next main theorem gives the local weak limit of the line graph of $Y_d(n,p)$.
\begin{figure}
	\captionsetup[subfigure]{justification=centering}
	\hspace{-10mm}
	\begin{subfigure}[b]{0.45\textwidth}
		\centering
		\includegraphics[height=45mm, width=\textwidth]{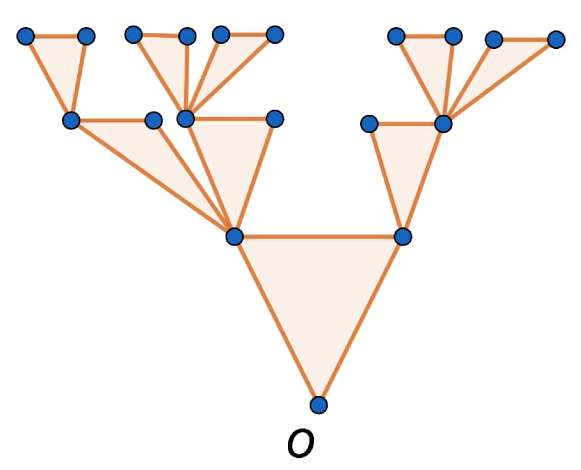}
		\caption{}
	\end{subfigure}
	\hspace{10mm}
	\begin{subfigure}[b]{0.45\textwidth}
		\centering
		\includegraphics[height=45mm, width=\textwidth]{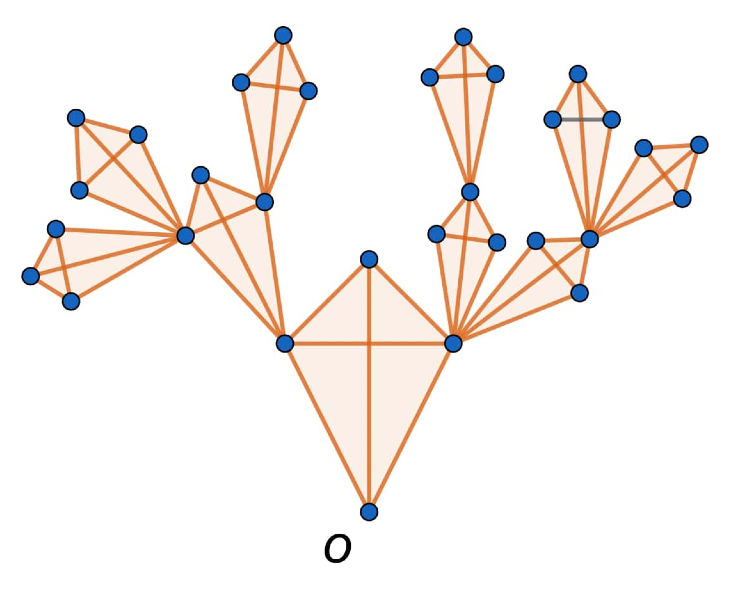}
		\caption{}
	\end{subfigure}
	\caption{(a) shows a finite 2-block Galton-Watson graph and (b) shows a finite 3-block Galton-Watson graph.}	\label{fig: d-block graph}
\end{figure}

\begin{theorem}\label{thm: weak limit}
	For $d \geq 2$ and $\lambda> 0$, let $G_n$ be the line graph of $Y_d(n,\lambda/n)$. Then the local weak limit of $G_n$ is $dGW(d\operatorname{Poi}(\lambda))$, the probability measure on $\mathcal{G}^*$ induced by $d$-block Galton-Watson graph with offspring distribution $d \operatorname{Poi}(\lambda)$.
\end{theorem}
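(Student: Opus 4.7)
The plan is to reduce weak convergence on $\mathcal{G}^*$ to convergence of the law of the rooted $t$-neighborhood for every fixed $t \geq 1$. By the definition of the metric in \eqref{eqn: metric G*}, convergence $U(G_n) \to dGW(d\operatorname{Poi}(\lambda))$ is equivalent to: for every $t \geq 1$ and every isomorphism class $[H,o^*]$ of finite rooted graphs of depth $t$, the probability that $(G_n,\sigma_0)_t \simeq (H,o^*)$ (where $\sigma_0$ is uniform on $V_n$) tends to $dGW(d\operatorname{Poi}(\lambda))\bigl(\{g : (g)_t \simeq (H,o^*)\}\bigr)$. The set of isomorphism classes is countable, so pointwise convergence suffices.

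First I would analyze the local structure of $G_n$ around a uniform root $\sigma_0 \in V_n$. The neighbors of $\sigma_0$ in $G_n$ decompose into \textbf{blocks} indexed by the $d$-cells $\tau \in Y_d(n,p)$ that contain $\sigma_0$: each such $\tau = \sigma_0 \cup \{v\}$ contributes the $d$ other $(d-1)$-faces $\tau \setminus \{w\}$ ($w \in \sigma_0$), which form a $(d+1)$-clique together with $\sigma_0$. The number of such $\tau$ is $\operatorname{Bin}(n-d,\lambda/n)$, converging in total variation to $\operatorname{Poi}(\lambda)$. This already exposes the $d$-block structure: each of $\operatorname{Poi}(\lambda)$ blocks consists of $d$ children, mutually joined, and all joined to the parent, matching the definition of $dGW(d\operatorname{Poi}(\lambda))$.

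Next I would couple the BFS exploration of $(G_n,\sigma_0)$ up to depth $t$ with the canonical exploration of the $d$-block GW graph. At each newly reached $(d-1)$-cell $\sigma$, reveal only the $d$-cells of $Y$ containing $\sigma$ that have not been revealed yet; this is a $\operatorname{Bin}(n-d-s,\lambda/n)$ variable, where $s$ is the (random) number of vertices of $V = \{1,\ldots,n\}$ that have appeared in previously revealed cells. Define the \emph{good event} $\mathcal{E}_n$ to be that throughout the depth-$t$ exploration, (i) every revealed $d$-cell introduces a single fresh vertex disjoint from all vertices seen so far, and (ii) no $(d-1)$-cell $\sigma' \in V_n$ other than those predicted by the block structure is adjacent to any explored cell. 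On $\mathcal{E}_n$, the explored graph has exactly the block-tree topology of the $d$-block GW graph, and each fresh block size is an independent $\operatorname{Bin}(n-d-s,\lambda/n)$; by induction on depth and the total-variation bound $\|\operatorname{Bin}(n-d-s,\lambda/n) - \operatorname{Poi}(\lambda)\|_{\mathrm{TV}} = O((\lambda+s)/n)$, the law of the depth-$t$ exploration converges to the law under $dGW(d\operatorname{Poi}(\lambda))$.

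The main obstacle is the probability estimate $\mathbb{P}(\mathcal{E}_n^c) = o(1)$, which requires a union bound over several collision types: two distinct $d$-cells sharing a vertex outside the common face, a new $d$-cell using a vertex seen earlier, or an extraneous $d$-cell bridging two blocks (which would insert an edge between cells belonging to different blocks when their fresh vertices coincide in a face). Each such event pins down a specific additional $d$-cell of $Y_d(n,\lambda/n)$, which is present with probability $\lambda/n$. Because the total size of the depth-$t$ exploration is stochastically dominated by a Galton-Watson tree with offspring mean $d\lambda$ (so it has bounded expectation for each fixed $t$), the number of potentially offending $d$-cells is stochastically $O(1)$, and a union bound yields $\mathbb{P}(\mathcal{E}_n^c) = O(1/n)$. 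Combining the coupling on $\mathcal{E}_n$ with the vanishing of $\mathbb{P}(\mathcal{E}_n^c)$ gives the desired pointwise convergence for every $t$ and every isomorphism class, completing the proof.
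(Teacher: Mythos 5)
Your exploration argument is a genuinely different route from the paper's: the paper never runs a BFS coupling on $G_n$ itself, but instead transfers the known local weak limit of the bipartite incidence graph $\widehat{G}_n$ of Linial--Peled (Lemma \ref{lemma: weak limit nu d,n}) through the continuous map $\hat{\varphi}$, identifying $U(G_n)$ with the pushforward $\nu_{d,n}^{\hat{\varphi}}$ and the pushforward of the Poisson $d$-tree law with $dGW(d\operatorname{Poi}(\lambda))$. Your direct approach is self-contained and makes the $d$-block structure transparent, at the cost of redoing the Poisson approximation and collision estimates that the citation absorbs. The block decomposition, the $\operatorname{Bin}(n-d-s,\lambda/n)\to\operatorname{Poi}(\lambda)$ step, and the $O(1/n)$ bound on collisions (using that the depth-$t$ exploration size has all moments bounded, by domination by a Galton--Watson tree with Poisson-type offspring) are all sound.

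There is, however, a genuine gap: what you prove is convergence of $\mathbb{P}\left((G_n,\sigma_0)_t \simeq (H,o^*)\right)$, where the probability is taken over \emph{both} the randomness of $Y_d(n,\lambda/n)$ and the uniform root $\sigma_0$. That is weak convergence of the \emph{annealed} measure $\mathbb{E}\,U(G_n)$, not of the random measures $U(G_n)$ themselves. The theorem, as the paper uses it (Lemma \ref{lemma: U(Gn) convergence} and the proof of Theorem \ref{thm: spectral measure dGW}), asserts that $U(G_n)$ converges weakly to $dGW(d\operatorname{Poi}(\lambda))$ \emph{almost surely}; your argument as written does not deliver this. The standard repair is a second-moment step: take two independent uniform roots $\sigma_0,\sigma_0'$ and run the exploration around both, showing that with probability $1-O(1/n)$ the two depth-$t$ neighbourhoods are disjoint and hence asymptotically independent, which gives $\operatorname{Var}\left(U(G_n)(B)\right)\to 0$ for each of the countably many events $B=\{g:(g)_t\simeq (H,o^*)\}$; this yields convergence in probability, and a quantitative variance bound (or a concentration inequality) upgrades it to almost sure convergence along the full sequence. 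You should add this step explicitly before claiming the theorem.
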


Figure \ref{fig:limiting_dist.jpg} suggests that for small values of $\lambda$, $\Gamma_d(\lambda)$ does not have a continuous part. In the next theorem, we gave a necessary condition for the limiting spectral distribution of adjacency matrices of Linial-Meshulam complex to have a continuous part. Our work here is inspired by \cite{BVS}, in which the authors have showed that for $np \rightarrow \lambda$, the limiting spectral distribution of the adjacency matrix of Erd\H{o}s-R{\'e}nyi random graph has a continuous part if and only if $\lambda>1$.
\begin{theorem}\label{thm: spectral measure dGW}
For $\lambda \leq 1/d$, the distribution $\Gamma_d(\lambda)$ is purely atomic.
\end{theorem}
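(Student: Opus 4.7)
The plan is to combine the local weak convergence of $G_n$ to $dGW(d\operatorname{Poi}(\lambda))$ (Theorem \ref{thm: weak limit}) with the spectral theory of unimodular random graphs, and then exploit subcriticality of the driving branching process to confine the spectrum to a countable set of algebraic numbers.

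First, I would recall that the line graph $G_n$ has adjacency matrix equal to the unsigned adjacency matrix $A_n$ of $Y_d(n, p)$ by Definition \ref{defn: graph Gn}, so Theorem \ref{thm: existence_LSD}(i) gives $F_{A_n}\to\Gamma_d(\lambda)$ almost surely. By Theorem \ref{thm: weak limit}, $U(G_n)$ converges weakly to $dGW(d\operatorname{Poi}(\lambda))$, which the paper has already established to be a unimodular probability measure on $\mathcal{G}^*$. I would then invoke the standard spectral principle for unimodular random graphs (Bordenave--Lelarge--Salez, Lyons): for a sequence of finite graphs converging locally weakly to a unimodular random rooted graph $(G, o)$, the expected empirical spectral distribution of the adjacency matrices converges weakly to the expectation $\mathbb{E}[\mu^o_G]$ of the spectral measure at the root. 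This identifies $\Gamma_d(\lambda) = \mathbb{E}[\mu^o_G]$, where $(G, o)\sim dGW(d\operatorname{Poi}(\lambda))$.

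Next, I would exploit that the tree skeleton of the $d$-block Galton--Watson graph is a Galton--Watson tree with offspring distribution $d\operatorname{Poi}(\lambda)$, whose mean is $d\lambda$. For $\lambda\le 1/d$, this mean is at most $1$, so the underlying Galton--Watson process is subcritical or critical and dies out almost surely. Since the extra block-clique edges in the $d$-block construction do not enlarge the vertex set of any connected component, the component $G(o)$ containing the root is finite almost surely. For such a realization, the adjacency matrix of $G(o)$ is a finite matrix with entries in $\{0,1\}$, so its characteristic polynomial has integer coefficients and all its eigenvalues are algebraic integers. Consequently $\mu^o_G$ is almost surely a finite atomic measure supported on the countable set of real algebraic numbers, and the averaged measure $\mathbb{E}[\mu^o_G]=\Gamma_d(\lambda)$ is also supported on this countable set, which forces it to be purely atomic.

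The main subtlety I anticipate lies in justifying the spectral-convergence step cleanly in this sparse, potentially unbounded-degree setting: the random-graph spectral principle is usually formulated for simple graphs, whereas $G_n$ carries the additional clique edges coming from each $d$-simplex, and one must verify that these are recorded by the local topology used in Theorem \ref{thm: weak limit}. In the subcritical and critical regimes this turns out to be a mild technical issue rather than a conceptual one, because finiteness of the root component makes $\mu^o_G$ well-defined trajectorywise without any self-adjoint-operator machinery. The conceptual core of the argument is therefore the algebraicity/countability observation applied to finite integer adjacency matrices of the limit components.
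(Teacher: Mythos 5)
Your proposal is correct and follows essentially the same route as the paper: local weak convergence to $dGW(d\operatorname{Poi}(\lambda))$ via Theorem \ref{thm: weak limit}, identification of $\Gamma_d(\lambda)$ with the expected spectral measure at the root through the unimodular spectral-convergence principle (the paper's Proposition \ref{Prop mu_rho converg} combined with Theorem \ref{thm: existence_LSD}), and almost-sure finiteness of the root component when $d\lambda\le 1$. The only difference is that where the paper cites [Lemma 3.1, \cite{coursSRG}] for the implication ``supported on finite graphs $\Rightarrow$ purely atomic,'' you reprove that lemma inline via algebraicity and countability of the eigenvalues of finite $\{0,1\}$-matrices, which is precisely the standard proof of that lemma rather than a genuinely different argument.
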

\section{LSD of adjacency matrices of Linial-Meshulam complex}\label{sec: Existence LSD}

Our central objective in this section is to prove Theorem \ref{thm: existence_LSD}. In Section \ref{subsec: LSD of centred adjacency matrix}, we prove that the centred adjacency matrices of $Y_d(n,p)$ have the same LSD as in Theorem \ref{thm: existence_LSD} (see Theorem \ref{thm: centred LSD}). In Section \ref{subsec:LSD for adjacency matrix}, it is shown that the eigenvalues of adjacency matrices are sufficiently close to the eigenvalues of their centred version, and therefore have the same LSD. We end this section with a few observations on the LSD obtained using its moments. We begin by defining the necessary combinatorial objects and the preliminary results required in the proof of Theorem \ref{thm: existence_LSD}.
\subsection{Combinatotial objects for Linial-Meshulam complex}

Recall the definition of a cell from Definition \ref{defn: d-cell}. For $j \geq -1$, we denote the set of all $(d-1)$ cells on $\{1,2,\ldots,n\}$ by $K^j$. Here, the dependence on $n$ is suppressed for notational convenience.
\begin{definition}\label{defn: word}
We define an element of $K^{d-1}$ as a letter. A word of length $k \geq 1$ is a sequence $\sigma_{1} \sigma_{2} \cdots \sigma_{k}$ of letters such that $\sigma_{i} \cup \sigma_{i+1}$ is a $d$-cell. For a word $w=\sigma_{1} \sigma_{2} \cdots \sigma_{k}$, we define $\operatorname{supp}_{0}(w)=\sigma_{1} \cup \sigma_{2} \cup \ldots \cup \sigma_{k}$ and $\operatorname{supp}_{d}(w)=\left\{\sigma_{i} \cup \sigma_{i+1} : 1 \leq i \leq k-1\right\}.$ The set $\left\{\sigma_{i}, \sigma_{1+1}\right\}$ is called an edge. We denote by $N_{w}(e)$, the number of times an edge $e$ is crossed by $w$. For  $\tau \in K^d$, we define $N_{w}(\tau):=\sum_{e \in E_{w}(\tau)} N_{w}(e)$, where $E_{w}(\tau)=\left\{\left\{\sigma_{i}, \sigma_{i+1}\right\}: \sigma_{i} \cup \sigma_{i+1}=\tau\right\}$.
\end{definition}

Consider the word $w=\{1,2\}\{1,3\}\{3,4\}\{1,4\}\{2,4\}\{1,2\}$. For the word $w$, we have $\operatorname{supp}_0(w)=\{1,2,3,4\}$ and $\operatorname{supp}_d(w)=\{\{1,2,3\},\{1,3,4\},\{1,2,4\}\}$. Here note that each edge appears only once and therefore $N_w(e)=1$ for all edges $e$. Now, for $d$-cells $\tau \in \operatorname{supp}_d(w)$, $N_w(\tau)$ denotes the number of $1 \leq i \leq 5$ such that $\sigma_i \cup \sigma_{i+1}=\tau$, and therefore $N_w(\{1,2,3\})=1$, $N_w(\{1,3,4\})=2$, $N_w(\{1,2,4\})=2$.
\begin{definition}\label{defn:equi_word}
	We say two words $w=\sigma_{1} \sigma_{2} \cdots \sigma_{k}$ and $w^\prime=\sigma_{1}^\prime \sigma_{2}^\prime \cdots \sigma_{k}^\prime$ are equivalent if there exists a bijection $\pi:\operatorname{supp}_0(w) \rightarrow \operatorname{supp}_0(w^\prime)$ such that $\pi(\sigma_i)=\sigma_i^\prime$ for all $i$ and $\pi|_{\sigma_1}$ is a strictly increasing function. 
\end{definition}

Consider the words $w=\{1,2\}\{1,3\}\{3,4\}\{2,4\}\{1,2\}$ and $w^\prime=\{3,5\}\{1,3\}\{1,7\}\{5,7\}\{3,5\}$. Here $\operatorname{supp}_0(w)=\{1,2,3,4\}$ and $\operatorname{supp}_0(w^\prime)=\{3,5,1,7\}$. The words $w$ and $w^\prime$ are equivalent and the appropriate bijection $\pi$ is given by $\pi(1)=3,\pi(2)=5,\pi(3)=1,\pi(4)=7$ and this is the only possible bijection showing equivalence between $w$ and $w^\prime$.

\begin{definition}\label{defn: W_s^k}
	A word $ w=\sigma_{1} \sigma_{2} \cdots \sigma_{k}$ is said to be closed if $\sigma_{k}=\sigma_{1}$. We denote by $\mathcal{W}_s^k(n,d)$, the set of representatives for the equivalence class of closed words of length $k+1$ and $|\operatorname{supp}_0(w)|=s$ such that $N_w(\tau) \neq 1$ for every $\tau \in K^d$. When $n,d$ are clear from the context, we denote the set $\mathcal{W}_s^k(n,d)$ by $\mathcal{W}_s^k$.
\end{definition}
\begin{figure}[h!]
	\includegraphics[height=35mm, width =105mm]{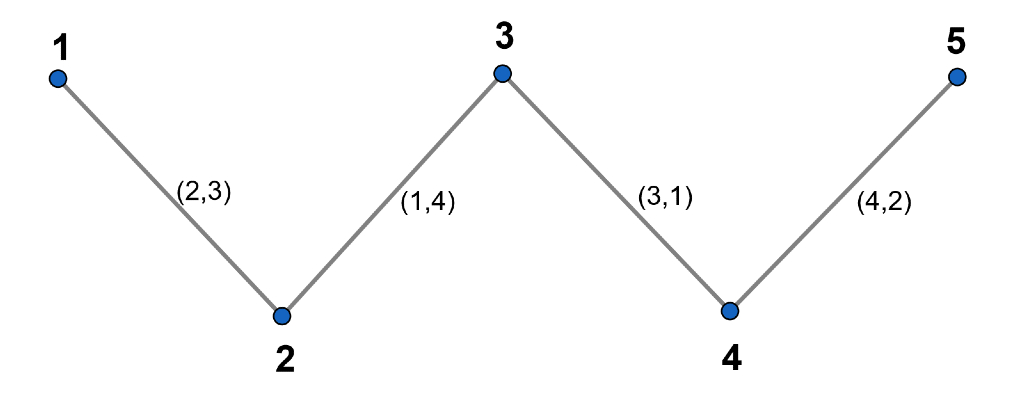}
	\caption{Consider the word $w=\{2,3\}\{1,2\}\{1,4\}\{2,4\}\{2,3\}$. Above figure shows the labelled path corresponding to $w$. The label $r_i$ is determined by the element in $\sigma_{i}\setminus \left(\sigma_{i+1}\cap \sigma_{i}\right)$ and $s_i$ is determined by the element in $\sigma_{i+1}\setminus \left(\sigma_{i+1}\cap \sigma_{i}\right)$. The ordering $\eta$ is given by $\eta(\{2\})=1,\eta(\{3\})=2,\eta(\{1\})=3,\eta(\{4\})=4$. Thus $r_1=2,r_2=2,r_3=1$ and $r_4=2$ and $s_1=3,s_2=4,s_3=1$ and $s_4=2$.
	}
	\label{fig:example_word}	
\end{figure}
Consider a word $w=\sigma_{1} \sigma_{2} \cdots \sigma_{k}$ with vertex set $V=[n]$. We define a new ordering $\eta:\operatorname{supp}_0(w) \rightarrow
\{1,2,\ldots, |\operatorname{supp}_0(w)|\}$ in the following way: For $u,v \in \operatorname{supp}_0(w)$, we say $\eta(u)<\eta(v)$ if 
\begin{enumerate}[(a)]
	\item $\min\{j:u \in \sigma_j\}<\min\{j:v \in \sigma_j\}$ and
	\item for $\min\{j:u \in \sigma_j\}=\min\{j:v \in \sigma_j\}$, if $u <v$ as natural numbers.
\end{enumerate}
We remark here that condition (b) occurs only when $\min\{j:u \in \sigma_j\}=1$.

For the word $w$, we construct a rooted labelled graph $G_w= (V_w,E_w,\ell_w,o)$ where the vertex set $V_w=[k]$, the edge set is $E_w=\{\{i,i+1\}:1\leq i \leq k-1\}$ and the root $o=1$. Note that since $\sigma_{i}\cup \sigma_{i+1}$ is always $d$-cell, both $\sigma_{i}\setminus \sigma_{i+1}$ and $\sigma_{i+1}\setminus \sigma_i$ are singleton sets. Now we define the labelling $\ell_w$ as follows: for an edge $\{i,i+1\} \in E_w$, we define
\begin{equation*}
	\ell_w\left(\{i,i+1\}\right)=(r_i,s_i),
\end{equation*} 
where $r_i=\eta(x_i)$, with $\{x_i\}=\sigma_i \setminus \sigma_{i+1}$ and $s_i=\eta(y_i)$, with $\{y_i\}=\sigma_{i+1}\setminus \sigma_i$.
For an example, see the construction of rooted labelled graph in Figure \ref{fig:example_word}. It follows that two words $w$ and $w^\prime$ are said to equivalent if and only if $\ell_w\left((i,i+1)\right)=\ell_{w^\prime}\left((i,i+1)\right)$ for all $1 \leq i \leq k-1$. The notion of labelled graph reduces the number of possible elements $w \in \widetilde{\mathcal{W}}_s^k$ to the order of $O(s^k)$. This shall be used to compute values of $|\widetilde{\mathcal{W}}_s^k|$ in Table \ref{table:W_s^k}.

Consider a word $w$ such that $|\operatorname{supp}_0(w)|=s$. We use the following convention to denote a representative element of $[w]$. The first $(d-1)$-cell is always taken as $\{1,2,\ldots ,d\}$ and the subsequent new $0-$cells appear in ascending order from $\{1,2,\ldots ,s\}$. For example, the representative element for the word $\{3,4\}\{3,6\}\{6,5\}\{3,6\}\{3,4\}$ would be $\{1,2\}\{1,3\}\{3,4\}\{1,3\}\{1,2\}$.

Next, we prove some preliminary lemmas about words required in the proof of Theorem \ref{thm: existence_LSD}. The reader may skip the proofs for the time being and read them together with the proof of Theorem \ref{thm: existence_LSD} in Section \ref{subsec:LSD for adjacency matrix}. 
\begin{lemma}\label{claim: supp_0,supp_d}
	For every word $w$, $\left|\operatorname{supp}_{0}(w)\right| \leq\left|\operatorname{supp}_{d}(w)\right|+d$.
\end{lemma}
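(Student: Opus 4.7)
The plan is to prove the inequality by induction on the length $k$ of the word $w=\sigma_1\sigma_2\cdots\sigma_k$. The intuition is that each successive letter $\sigma_{i+1}$ differs from $\sigma_i$ by exactly one vertex (since $\sigma_i\cup\sigma_{i+1}$ is a $d$-cell, i.e.\ has $d+1$ vertices while each $\sigma_j$ has $d$ vertices), so at most one new vertex can appear at each step; the content of the lemma is that a new vertex can appear only when a genuinely new $d$-cell $\sigma_i\cup\sigma_{i+1}$ is created.

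For the base case $k=1$, we have $|\operatorname{supp}_0(w)|=|\sigma_1|=d$ and $|\operatorname{supp}_d(w)|=0$, so equality holds. For the inductive step, let $w_k=\sigma_1\cdots\sigma_k$ be a word and form $w_{k+1}=w_k\sigma_{k+1}$. I would split into two cases according to whether the newly formed $d$-cell $\tau:=\sigma_k\cup\sigma_{k+1}$ already belongs to $\operatorname{supp}_d(w_k)$ or not.

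In the first case, where $\tau \in \operatorname{supp}_d(w_k)$, the $d$-cell support does not grow, and moreover every vertex of $\tau$ (in particular the unique vertex of $\sigma_{k+1}\setminus\sigma_k$) already lies in $\operatorname{supp}_0(w_k)$, since $\tau$ appeared earlier as some $\sigma_j\cup\sigma_{j+1}\subset\operatorname{supp}_0(w_k)$. Hence both supports are unchanged and the inductive hypothesis transfers verbatim. In the second case, $|\operatorname{supp}_d(w_{k+1})|=|\operatorname{supp}_d(w_k)|+1$, while $|\operatorname{supp}_0(w_{k+1})|\leq|\operatorname{supp}_0(w_k)|+1$ since $\sigma_{k+1}\setminus\sigma_k$ contains a single vertex. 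Adding $1$ to both sides of the inductive hypothesis closes the induction.

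There is no real obstacle here: the only subtle point is recognising that a repeated $d$-cell $\tau$ forces all of its vertices to be already present in $\operatorname{supp}_0$, which is immediate from the definition of $\operatorname{supp}_d$ as $\{\sigma_j\cup\sigma_{j+1}\}$ and of $\operatorname{supp}_0$ as $\bigcup_j\sigma_j$. The argument never uses the condition $N_w(\tau)\neq 1$, so in fact the inequality holds for every word, not only those in $\mathcal{W}_s^k$; this matches the statement of the lemma.
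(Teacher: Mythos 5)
Your proof is correct and follows essentially the same route as the paper: induction on the word length with a two-case analysis at each step, resting on the observation that a new vertex in $\sigma_{k+1}$ can appear only together with a new $d$-cell $\sigma_k\cup\sigma_{k+1}$ (and conversely a repeated $d$-cell forces no new vertex). The only cosmetic difference is that you split cases on whether $\operatorname{supp}_d$ grows while the paper splits on whether $\operatorname{supp}_0$ grows; the two case analyses are logically equivalent.
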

\begin{proof}
	We prove the result by induction on $k$, the length of the word. For $k=1$, $\left|\operatorname{supp}_{0}(w)\right|=d$, $\left|\operatorname{supp}_{d}(w)\right|=0$ and therefore the inequality holds. Suppose the inequality holds for all words of length $k$, for some fixed $k \geq 1$. Consider a word $w=\sigma_{1} \sigma_{2} \cdots \sigma_{k+1}$ and let $v= \sigma_{1} \sigma_{2} \cdots \sigma_{k}$. Then by induction hypothesis, $\left|\operatorname{supp}_{0}(v)\right| \leq \left|\operatorname{supp}_{d}(v)\right| +d $. The following two cases occur:\\\\
	\textbf{Case 1:} $\left|\operatorname{supp}_{0}(w)\right| = \left|\operatorname{supp}_{0}(v)\right|$.\\\\ 
	Since, $\left|\operatorname{supp}_{d}(v)\right| \leq \left|\operatorname{supp}_{d}(w)\right|$, it follows from induction hypothesis that \begin{equation*}
		\left|\operatorname{supp}_{0}(w)\right| =\left|\operatorname{supp}_{0}(v)\right| \leq \left|\operatorname{supp}_{d}(v)\right|+d \leq \left|\operatorname{supp}_{d}(w)\right|+d.
	\end{equation*}
	\\
	\noindent\textbf{Case 2:} $\left|\operatorname{supp}_{0}(w)\right| >  \left|\operatorname{supp}_{0}(v)\right|$.\\\\
	Since $\sigma_k \cup \sigma_{k+1}$ is a $d$-cell, $\sigma_{k+1}$ can contain at most one new $0$-cell and thus, the only possible value of $\left|\operatorname{supp}_{0}(w)\right|$ in this case is $(\left|\operatorname{supp}_{0}(v)\right|+1)$. As $\sigma_{k+1}$ contains a $0$-cell not contained in $\left|\operatorname{supp}_{0}(v)\right|$, $\sigma_k \cup \sigma_{k+1} \notin \operatorname{supp}_{d}(v)$ and thus $|\operatorname{supp}_d(w)|=|\operatorname{supp}_d(v)|+1$. Hence we get,
	\begin{equation*}
		\left|\operatorname{supp}_{0}(w)\right| =\left|\operatorname{supp}_{0}(v)\right|+1 \leq \left|\operatorname{supp}_{d}(v)\right|+d+1 = \left|\operatorname{supp}_{d}(w)\right|+d.
	\end{equation*}
	
	This completes the proof.
\end{proof}

Recall the definition of $\mathcal{W}_s^k$ from Definition \ref{defn: W_s^k}. Also note that for $k=1$, the set $\mathcal{W}_s^1$ is an empty set. The following claim gives an upper bound on the size of $\mathcal{W}_s^k$ for $k \geq 2$.

\begin{lemma}\label{claim:size W_k,s}
	For every $d \geq 2$, $k \geq 2$ and $ d+1 \leq s \leq \bigl\lfloor\frac{k}{2}\bigr\rfloor+d$,
	$$|\mathcal{W}_s^k| \leq  (ds)^{k}.$$
\end{lemma}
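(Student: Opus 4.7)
The plan is to encode each word in $\mathcal{W}_s^k$ by its sequence of edge labels on the labelled graph $G_w$ introduced just before the lemma, and then count the possible label sequences. Recall that for a closed word $w = \sigma_1 \sigma_2 \cdots \sigma_{k+1}$ of length $k+1$, the labelled graph has $k+1$ vertices and $k$ edges, with each edge $\{i,i+1\}$ carrying a label $(r_i,s_i)$, where $r_i = \eta(x_i)$ for $\{x_i\} = \sigma_i \setminus \sigma_{i+1}$ and $s_i = \eta(y_i)$ for $\{y_i\} = \sigma_{i+1} \setminus \sigma_i$. Both coordinates lie in $\{1,\ldots,s\}$, since $|\operatorname{supp}_0(w)| = s$.

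The first step is to argue that the map $w \mapsto \bigl((r_i,s_i)\bigr)_{i=1}^{k}$ is injective on canonical representatives (those with $\sigma_1 = \{1,2,\ldots,d\}$ and new $0$-cells introduced in ascending order through $\{1,\ldots,s\}$, as prescribed just before the lemma). Given the label sequence together with $\sigma_1 = \{1,\ldots,d\}$, one reconstructs $\sigma_{i+1}$ inductively: remove from $\sigma_i$ the vertex whose $\eta$-label is $r_i$, then insert the vertex whose $\eta$-label is $s_i$. If $s_i$ has already appeared, this refers to an existing $0$-cell; otherwise the canonical convention forces $s_i$ to equal one more than the largest $\eta$-label seen so far, and a single fresh vertex is thereby introduced. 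The reconstruction is unambiguous, so distinct canonical representatives yield distinct label sequences.

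The second step is a direct count. Since $x_i \in \sigma_i$ and $|\sigma_i| = d$, the coordinate $r_i$ admits at most $d$ values. Since $y_i \in \operatorname{supp}_0(w)$, the coordinate $s_i$ admits at most $s$ values. Multiplying over the $k$ edges of $G_w$ gives at most $(ds)^k$ admissible label sequences, and combining with the injectivity above yields $|\mathcal{W}_s^k| \leq (ds)^k$.

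The argument is essentially careful bookkeeping of the label encoding and poses no serious obstacle; the only delicate point is in Step 1, where one must check that the ascending-order convention really does force a unique reconstruction of a fresh vertex whenever $s_i$ is a value not previously encountered. Observe that the extra constraint $N_w(\tau) \neq 1$ in the definition of $\mathcal{W}_s^k$ is not used in this upper bound; it plays its role later, together with Lemma \ref{claim: supp_0,supp_d}, by restricting $s$ to $d+1 \leq s \leq \lfloor k/2 \rfloor + d$ for the words that actually contribute to moment computations.
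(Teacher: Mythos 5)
Your proof is correct and is essentially the paper's argument: the label $(r_i,s_i)$ records exactly which of the $d$ vertices of $\sigma_i$ is deleted and which of the at most $s$ vertices of $\operatorname{supp}_0(w)$ is added, so your count of label sequences coincides with the paper's direct enumeration of canonical representatives ($\sigma_1$ fixed, then at most $d\times s$ choices per step). The injectivity you verify in Step~1 is the observation, already recorded in the paper just before the lemma, that equivalence of words is equivalent to equality of their label sequences.
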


\begin{proof}
	Notice that $\mathcal{W}_s^k$ is non-empty only if $d+1 \leq s \leq \bigl\lfloor\frac{k}{2}\bigr\rfloor+d$. We prove the claim by enumerating the number of different possible representatives for elements of $\mathcal{W}_s^k$. Note that $\sigma_1$ is always given by $\{1,2, \ldots ,d\}$ and therefore $\sigma_{1}$ has only one possibility. For each $1 \leq i \leq k-1$, $\sigma_{i+1} \setminus \sigma_{i}$ is a singleton set. Therefore $\sigma_{i+1}$ can be obtained from $\sigma_{i}$ by the deletion of a $0-$cell from $\sigma_{i}$ followed by the addition of a new $0-$cell. Therefore for each fixed choice of $\sigma_{i}$, the number of choices for $\sigma_{i+1}$ is bounded by $d \times s$. It follows that
	$$|\mathcal{W}_s^k| \leq d \times (ds)^{k-1} \leq  (ds)^k.$$
\end{proof}

The following notation is inspired by Lemma \ref{claim: supp_0,supp_d}. 

\begin{notation}\label{notation:W_s^k tilde}
	We denote by $\widetilde{\mathcal{W}}_s^k$, the set of all closed words $w \in \mathcal{W}_s^k$ such that $|\operatorname{supp}_d(w)|=s-d$.
\end{notation}

\begin{definition}\label{defn:subword}
	For a word $w=\sigma_1\sigma_{2}\cdots \sigma_{k+1} \in \widetilde{\mathcal{W}}_s^k$,  $\tilde{w}=\sigma_{i_1}\sigma_{i_2}\cdots \sigma_{i_\ell}$ such that $1 \leq i_1 <i_2<\cdots < i_\ell \leq k+1$ are consecutive integers, is called a subword of $w$. For the word $w$ and $1 \leq i \leq k+1$, define the subword $\tilde{w}_i$ as $\tilde{w}_i=\sigma_{1}\sigma_{2}\cdots \sigma_{i}$.
\end{definition}
We now briefly explain a special characteristic of words $w \in \widetilde{\mathcal{W}}_s^k$. 
Note that, $|\operatorname{supp}_0(\tilde{w}_1)|=d$ and $|\operatorname{supp}_d(\tilde{w}_1)|=0$, and thus $|\operatorname{supp}_d(\tilde{w}_1)|=|\operatorname{supp}_0(\tilde{w}_1)|-d$. Also, we have that $|\operatorname{supp}_0(\tilde{w}_i)|$ and $|\operatorname{supp}_d(\tilde{w}_i)|$ are increasing functions on $i$. Furthermore, $|\operatorname{supp}_0(\tilde{w}_{i+1})|-|\operatorname{supp}_0(\tilde{w}_i)|$ and $|\operatorname{supp}_d(\tilde{w}_{i+1})|-|\operatorname{supp}_d(\tilde{w}_i)|$ are always bounded above by 1. Thus we get that 
\begin{equation}\label{eqn:iff W_s^k}
	|\operatorname{supp}_d(w)|=s-d \text{ if and only if } |\operatorname{supp}_d(\tilde{w}_{i})|=|\operatorname{supp}_0(\tilde{w}_i)|-d \text{ for all } 1 \leq i \leq k+1,
\end{equation}
or in other words, $\operatorname{supp}_0(\tilde{w}_{i+1})$ contains a new $0$-cell not contained in $\operatorname{supp}_0(\tilde{w}_i)$ if and only if $\operatorname{supp}_d(\tilde{w}_{i+1})$ contains a new $d$-cell not contained in $\operatorname{supp}_d(\tilde{w}_i)$. Now, we prove the following claim about the number of elements in each equivalence class in $\widetilde{\mathcal{W}}_s^k$.
\begin{lemma}\label{claim:card_[w]}
	For every $d \geq 2, k \geq 1$ and $w \in \widetilde{\mathcal{W}}_s^k$, $|[w]|=\frac{n!}{(n-s)!d!}$.
\end{lemma}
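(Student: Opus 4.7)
The plan is to parametrise $[w]$ by bijections realising the equivalence of Definition~\ref{defn:equi_word}, show that each word in $[w]$ arises from a \emph{unique} such bijection, and then count the bijections directly.

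First, I would use the canonical representative convention so that $\operatorname{supp}_0(w) = \{1, 2, \ldots, s\}$, $\sigma_1 = \{1, 2, \ldots, d\}$, and the new $0$-cells $d+1, d+2, \ldots, s$ appear in this order as $i$ increases. A word $w' = \sigma_1' \cdots \sigma_{k+1}'$ on $[n]$ belongs to $[w]$ iff there exists a bijection $\pi \colon \{1, \ldots, s\} \to \operatorname{supp}_0(w')$ with $\pi(\sigma_i) = \sigma_i'$ for every $i$ and $\pi|_{\sigma_1}$ strictly increasing. The goal is thus to enumerate the admissible $\pi$'s and verify that the map $\pi \mapsto w'$ is injective.

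Next, I would establish the uniqueness of $\pi$ by induction on $i$. The strictly-increasing constraint on $\pi|_{\sigma_1}$, together with the image $\sigma_1'$, fixes $\pi$ on $\{1, \ldots, d\}$. Since $\sigma_i \cup \sigma_{i+1}$ is a $d$-cell, consecutive letters of any word differ by exactly one $0$-cell, so whenever $\sigma_i$ contains a new vertex $v \in \{d+1, \ldots, s\}$, the remaining $d-1$ elements of $\sigma_i$ lie in $\operatorname{supp}_0(\sigma_1, \ldots, \sigma_{i-1})$, where $\pi$ has already been specified by the inductive hypothesis. The equality $\pi(\sigma_i) = \sigma_i'$ then pins down $\pi(v)$ as the unique element of $\sigma_i' \setminus \pi(\sigma_i \setminus \{v\})$. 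This shows simultaneously the existence of a $\pi$ for any $w' \in [w]$ and its uniqueness.

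Finally, I would count admissible $\pi$'s. There are $\binom{n}{d}$ choices for the image $\sigma_1' = \pi(\sigma_1) \subset [n]$, and the increasing constraint then forces $\pi|_{\sigma_1}$. The $s-d$ remaining images $\pi(d+1), \pi(d+2), \ldots, \pi(s)$ can be picked sequentially from the unused vertices of $[n]$, contributing $(n-d)(n-d-1) \cdots (n-s+1) = (n-d)!/(n-s)!$ assignments. By the injectivity above, each $\pi$ yields a distinct word of $[w]$, hence
\[
|[w]| \;=\; \binom{n}{d} \cdot \frac{(n-d)!}{(n-s)!} \;=\; \frac{n!}{d!\,(n-s)!}.
\]
The principal subtlety is the uniqueness step: it depends on the observation that each new $0$-cell in a word is introduced in isolation (because $|\sigma_i \triangle \sigma_{i+1}| = 2$), so its image under $\pi$ is determined by the values of $\pi$ at previously seen vertices. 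The condition $w \in \widetilde{\mathcal{W}}_s^k$ plays no role beyond fixing $s = |\operatorname{supp}_0(w)|$.
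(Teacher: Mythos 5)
Your proposal is correct and follows essentially the same route as the paper: both arguments enumerate the vertex-label assignments realizing the equivalence of Definition~\ref{defn:equi_word}, with the count factoring as the number of choices for the image of $\sigma_1$ (forced to be increasing) times the $(s-d)$ free sequential choices for the newly appearing $0$-cells, the paper writing this as $\binom{n}{s}\binom{s}{d}(s-d)!$ and you as $\binom{n}{d}(n-d)\cdots(n-s+1)$. Your bookkeeping via bijections is slightly cleaner in that it makes the injectivity of $\pi\mapsto w'$ explicit and, as you note, never uses the hypothesis $w\in\widetilde{\mathcal{W}}_s^k$ beyond $|\operatorname{supp}_0(w)|=s$, whereas the paper invokes that hypothesis only to organize its product over first occurrences of $d$-cells.
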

\begin{proof}
	Consider a word $w \in \widetilde{\mathcal{W}}_s^k$. Note that the number of choices for choosing $s$ many 0-cells in $\supp_0(w)$ is ${n \choose s}$. Our objective is to enumerate the number of choices for $\sigma_i$ for each fixed choice of $\operatorname{supp}_0(w)$. For a fixed choice of $\operatorname{supp}_0(w)$, the number of ways for choosing $\sigma_{1}$ is ${s \choose d}$. 
	
	Suppose $\sigma_1,\sigma_2, \ldots ,\sigma_i$ are chosen. Consider the graph $G_w$ corresponding to $w$ with labelling $\ell_w$ and let $\ell(\{i,i+1\})=(r_i,s_i)$. Suppose there exits $j < i$ such that $s_j=s_i$. This implies that the $d$-cell $\sigma_i \cup \sigma_{i+1}$ has already occurred in $w$. Thus, all the $0$-cells of $\sigma_{i+1}$ are already chosen. Furthermore, $\sigma_{i+1} \setminus \sigma_{i}$ is a singleton set and therefore, there exists a unique choice for the $0$-cell to be added. Thus in this case, $\sigma_{i+1}$ has only one choice.
	
	Suppose $i$ is the first occurrence of $s_i$ as the second component. This implies that $\sigma_i \cup \sigma_{i+1}$ is the first occurrence of a $d$-cell. Since $w \in \widetilde{\mathcal{W}}_s^k$, it follows that $\sigma_{i+1}$ contains a $0$-cell that is not an element of $\supp_0(\tilde{w}_i)$ where $\tilde{w}_i=\sigma_{1}\sigma_{2}\cdots \sigma_{i}$. If $\sigma_i \cup \sigma_{i+1}$ is the $r$-th $d$-cell appearing in the word $w$, then the number of ways of choosing the new $0$-cell, and thereby $\sigma_{i+1}$, is ${s-d-r \choose 1}$. Note that $|\supp_d(w)|=s-d$ and thus the number of choices for $|[w]|$ is
	\begin{align*}
		|[w]|&={n \choose s}{s \choose d}\prod_{r=0}^{s-d-1}{s-d-r \choose 1}\\
		&=\frac{n!}{(n-s)!d!}.
	\end{align*}
\end{proof}

\begin{definition}
	For $\lambda>0 $ and $k \in \Z^+$, define 
	$$\displaystyle\beta_{k}(\lambda)=\sum_{s=d+1}^{\lfloor\frac{k}{2}\rfloor +d}\left|\widetilde{W}_{s}^{k}\right| \lambda^{s-d},$$
	where $\widetilde{\mathcal{W}}_s^k$ is as defined in Notation \ref{notation:W_s^k tilde}. Note that $\beta_1(\lambda)=0$ for all $\lambda>0$.
	
\end{definition}
\begin{table}[h!]
	\begin{tabular}[]{c c}
		\setlength\extrarowheight{4pt}
		\renewcommand{\arraystretch}{1.3}
		\hspace{-10mm}\begin{minipage}{.5\linewidth}
			\begin{tabular}[pos]{|c|c|c|c|c|c|c|c|c|}
				\hline
				$d=2$ & \multicolumn{8}{c}{Values of $k$} \vline \\
				\hline
				values of $s$ & \textbf{1} & \textbf{2} &  \textbf{3} & \textbf{4} & \textbf{5} &  \textbf{6} & \textbf{7} & \textbf{8} \\
				\hline
				3& 0 & 2 & 2 &  6 & 10 & 22 &  42 & 86 \\ 
				4& 0 & 0 & 0 &  8 & 20 & 84 & 224 & 688\\
				5& 0 & 0 & 0 &  0 & 0 & 40 &  168 & 896 \\
				6& 0 & 0 & 0 &  0 & 0 & 0  &  0   & 224 \\
				\hline  
			\end{tabular}
		\end{minipage}&
		\begin{minipage}{.5\linewidth}
			\setlength\extrarowheight{4pt}
			\renewcommand{\arraystretch}{1.3}
			\begin{tabular}[pos]{|c|c|c|c|c|c|c|c|c|}
				\hline
				$d=3$  & \multicolumn{8}{c}{Values of $k$} \vline  \\
				\hline
				values of $s$& \textbf{1} & \textbf{2} &  \textbf{3} & \textbf{4} & \textbf{5} &  \textbf{6} & \textbf{7} & \textbf{8} \\
				\hline
				
				4& 0 & 3 & 6 &  21 & 60 & 183 &  546 & 1641 \\
				5& 0 & 0 & 0 &  18 & 90 & 486 & 2142 & 9198\\
				6& 0 & 0 & 0 &  0 & 0 & 135 &  1134 & 8316 \\
				7& 0 & 0 & 0 &  0 & 0 & 0  &  0   & 1134 \\
				\hline  
			\end{tabular}
		\end{minipage}
	\end{tabular}
	\caption{Table showing the values of $|\widetilde{\mathcal{W}}_s^k|$ for $d=2,3$ and different values of $s,k$.}
	\label{table:W_s^k}
\end{table}
We will later show that for each $\lambda>0$, $\{\beta_{k}(\lambda)\}$ is the sequence of limiting moments of the EESD of the centred unsigned adjacency matrix of $Y_d(n,p)$ as $np \rightarrow \lambda$. Therefore $\{\beta_{k}(\lambda)\}$ is a positive semi-definite sequence and there exist probability measures on $\mathbb{R}$ with moment sequence $\{\beta_{k}(\lambda)\}$.
We proceed to show that the moment sequence $\{ \beta_k (\lambda) \}$ determine a unique probability measure on $\R$ to which $F_{B_n}$ converges almost surely.

\begin{lemma}\label{lemma: unique:beta_h}
	For every $\lambda>0$, the moment sequence $\{\beta_{k}(\lambda)\}_{k \geq 1}$ defines a unique probability measure on the real line.
\end{lemma}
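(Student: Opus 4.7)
The plan is to verify Carleman's condition, which asserts that if $\sum_{k \geq 1} \beta_{2k}(\lambda)^{-1/(2k)} = \infty$, then at most one probability measure on $\R$ has $\{\beta_k(\lambda)\}$ as its moment sequence. Since the paragraph preceding the lemma tells us the sequence $\{\beta_k(\lambda)\}$ is positive semi-definite and is realised as a moment sequence of \emph{some} probability measure, it suffices to establish Carleman's divergence, which then upgrades existence to uniqueness.

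To bound $\beta_k(\lambda)$ I would use the two already-proved combinatorial facts. By Lemma \ref{claim:size W_k,s} applied to the subset $\widetilde{\mathcal{W}}_s^k \subseteq \mathcal{W}_s^k$, we have $|\widetilde{\mathcal{W}}_s^k| \leq (ds)^k$. Since the summation in the definition of $\beta_k(\lambda)$ runs only over $d+1 \leq s \leq \lfloor k/2 \rfloor + d$, each factor $\lambda^{s-d}$ is bounded by $\max(1,\lambda)^{k/2}$ and $(ds)^k$ is bounded by $(d(k/2+d))^k$. Combining and absorbing the $O(k)$ terms in the sum, I would obtain a bound of the form
\begin{equation*}
\beta_k(\lambda) \;\leq\; C_\lambda\, k \cdot \bigl(C'_{d,\lambda}\, k\bigr)^k
\end{equation*}
for constants $C_\lambda, C'_{d,\lambda}$ depending only on $\lambda$ and $d$. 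Taking $2k$-th roots,
\begin{equation*}
\beta_{2k}(\lambda)^{1/(2k)} \;\leq\; C''_{d,\lambda}\, k
\end{equation*}
for all $k \geq 1$, with a new constant $C''_{d,\lambda}$.

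Consequently
\begin{equation*}
\sum_{k=1}^{\infty} \beta_{2k}(\lambda)^{-1/(2k)} \;\geq\; \sum_{k=1}^{\infty} \frac{1}{C''_{d,\lambda}\, k} \;=\; \infty,
\end{equation*}
which is exactly Carleman's condition. By the Hamburger moment problem, this guarantees that the probability measure with moments $\{\beta_k(\lambda)\}$ is unique, completing the proof.

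The only non-routine step is the combinatorial bound on $\beta_k(\lambda)$; however, this is essentially immediate from Lemma \ref{claim:size W_k,s} once one observes that the constraint $s \leq \lfloor k/2 \rfloor + d$ (which comes from the relation $|\operatorname{supp}_d(w)|=s-d$ for $w \in \widetilde{\mathcal{W}}_s^k$, combined with the fact that closed words with $N_w(\tau)\neq 1$ must traverse each supporting $d$-cell at least twice) keeps $s$ of order $k$ rather than independent of $k$. No subtle analytic issue arises beyond verifying the growth rate, so I expect no serious obstacle; the argument is a short Carleman-type estimate.
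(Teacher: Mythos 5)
Your proposal is correct and follows essentially the same route as the paper: both verify Carleman's condition by bounding $\beta_{2k}(\lambda)$ via the estimate $|\widetilde{\mathcal{W}}_s^{2k}| \leq |\mathcal{W}_s^{2k}| \leq (ds)^{2k}$ from Lemma \ref{claim:size W_k,s} together with the constraint $s \leq \lfloor k \rfloor + d$, yielding $\beta_{2k}(\lambda)^{1/(2k)} = O(k)$ and hence divergence of $\sum_k \beta_{2k}(\lambda)^{-1/(2k)}$. The only cosmetic difference is that you absorb the dependence on $\lambda$ into a single constant via $\max(1,\lambda)^{k/2}$ whereas the paper splits into the cases $\lambda>1$ and $\lambda\leq 1$; this changes nothing of substance.
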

\begin{proof}
	We prove the lemma using Carleman's condition (Lemma B.3, \cite{bai_silverstein}). Consider the case $\lambda>1$. Note that since $\widetilde{\mathcal{W}}_s^{2k} \subseteq \mathcal{W}_s^{2k}$, it follows from Lemma \ref{claim:size W_k,s} that $|\widetilde{\mathcal{W}}_s^{2k}| \leq (d(k+d))^{2k}$, and in turn
	\begin{align*}
		\beta_{2k}(\lambda) & \leq k \times(d(k+d))^{2k} \times \lambda^{2k}\\
		&\leq (k+d)^{2k+1} \times d^{2k} \times \lambda^{2k}.
	\end{align*}
	Thus 
	$\displaystyle {(\beta_{2k}(\lambda))}^{-\frac{1}{2k}} \geq a_k:= \frac{1}{d\lambda} \times \frac{1}{(k+d)^{1+1/2k}} $. Since $\sum_{k \geq 1} a_k$ diverges, it follows from Carleman's condition that there exists a unique probability measure on $\R$ with $\{ \beta_k(\lambda)\}$ as its moments. For $\lambda \leq 1$, we get $\beta_{2k}(\lambda) \leq k \times(d(k+d))^{2k}$ and a similar argument completes the proof. 
\end{proof}

Now, with all the required preliminaries in hand, we are equipped to prove Theorem \ref{thm: centred LSD}.

\subsection{LSD of centred adjacency matrices}\label{subsec: LSD of centred adjacency matrix}

The main objective of this section is to prove the existence of limiting spectral distribution for the centred versions of the adjacency matrices of Linial-Meshulam complex. We define the centred adjacency matrices of $Y_d(n,p)$ as
	\begin{equation*}
		B_n= A_n- \mathbb{E} A_n \text{ and } B_n^{+}= A_n^+- \mathbb{E} A_n^+.
	\end{equation*}
In this section, we show that $B_n$ and $B_n^+$, have limiting spectral distribution $\Gamma_d(\lambda)$ and $-\Gamma_d(\lambda)$, respectively. Later, this theorem will be used to prove Theorem \ref{thm: existence_LSD}.

\begin{theorem}\label{thm: centred LSD}
	For $d \geq 2$ and $n \in \N$, let $F_{B_n}$ and $F_{B_n^+}$ denote the empirical spectral distribution of $B_n$ and $B_n^+$, respectively. Then as $np \rightarrow \lambda >0$,
	\begin{enumerate}[(i)]
		\item $F_{B_n} \stackrel{D}{\rightarrow} \Gamma_d(\lambda)$ almost surely,
		\item  $F_{B_n^+}\stackrel{D}{\rightarrow}-\Gamma_d(\lambda)$ almost surely,
	\end{enumerate} 
	where $\Gamma_d(\lambda)$ is the unique distribution function on real line with moment sequence $\{\beta_k\}$.	
\end{theorem}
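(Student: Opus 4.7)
The plan is to apply the method of moments: for each fixed $k$, show that $\frac{1}{\binom{n}{d}}\operatorname{tr}(B_n^k)\to\beta_k(\lambda)$ almost surely, and then invoke Lemma \ref{lemma: unique:beta_h} together with Carleman's criterion to upgrade moment convergence to weak convergence of $F_{B_n}$ toward $\Gamma_d(\lambda)$. Expanding the trace yields
\[
\operatorname{tr}(B_n^k)=\sum_{\sigma_1,\ldots,\sigma_k}\prod_{i=1}^{k}(B_n)_{\sigma_i\sigma_{i+1}}\qquad(\sigma_{k+1}=\sigma_1),
\]
in which a tuple contributes only when $\sigma_i\cup\sigma_{i+1}$ is a $d$-cell for every $i$, so the nontrivial terms are indexed by closed words in the sense of Definition \ref{defn: word}. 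Writing $(B_n)_{\sigma\sigma'}=\mathbbm{1}[\tau\in Y_d(n,p)]-p$ with $\tau=\sigma\cup\sigma'$, the independence of the indicators across distinct $d$-cells forces the expectation to vanish whenever some $\tau$ has $N_w(\tau)=1$, so only words $w\in\mathcal{W}_s^k$ contribute. For such $w$ with $|\operatorname{supp}_d(w)|=m\ge s-d$, each of the $m$ distinct $d$-cells contributes a factor of order $p$ (since $\mathbb{E}[(\mathbbm{1}[\tau\in Y]-p)^{r}]=p+O(p^2)$ for $r\ge 2$), the equivalence class $[w]$ has size $O(n^s)$, and normalization by $\binom{n}{d}\sim n^d/d!$ produces a contribution of order $(np)^{m}\,n^{s-d-m}$. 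Lemma \ref{claim: supp_0,supp_d} gives $m\ge s-d$, so after $np\to\lambda$ only classes with $m=s-d$, i.e.\ $w\in\widetilde{\mathcal{W}}_s^k$, survive in the limit; using the exact count $|[w]|=n!/[(n-s)!\,d!]$ from Lemma \ref{claim:card_[w]}, each surviving class contributes $\lambda^{s-d}$, and summing over $s$ yields precisely $\beta_k(\lambda)$.

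To promote expected moment convergence to almost-sure convergence I would bound the variance of the normalized trace. Writing $\mathbb{E}[\operatorname{tr}(B_n^k)^2]$ as a double sum over pairs $(w,w')$ of closed words, the independence of the $d$-cell indicators causes pairs with disjoint $d$-cell supports to factorise and cancel against $(\mathbb{E}\operatorname{tr}(B_n^k))^2$, leaving only pairs with $\operatorname{supp}_d(w)\cap\operatorname{supp}_d(w')\ne\emptyset$. The shared-$d$-cell constraint, combined with Lemma \ref{claim: supp_0,supp_d} applied to the concatenation, costs an extra factor of $n^{-1}$ in the scaling and gives $\operatorname{Var}\bigl[\tfrac{1}{\binom{n}{d}}\operatorname{tr}(B_n^k)\bigr]=O(n^{-2})$. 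Borel--Cantelli then yields almost-sure convergence of each moment, and Lemma \ref{lemma: unique:beta_h} together with Carleman's criterion completes the proof of (i).

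For (ii) the same expansion applies, but each entry now carries an orientation sign: $(B_n^+)_{\sigma\sigma'}=\epsilon_{\sigma\sigma'}\bigl(\mathbbm{1}[\tau\in Y]-p\bigr)$ where $\epsilon_{\sigma\sigma'}=-c(\tau,\sigma)c(\tau,\sigma')\in\{\pm 1\}$ and $c(\tau,\sigma)$ denotes the coefficient of the positively oriented $\sigma$ in the boundary of a fixed orientation of $\tau$. Consequently, the contribution of a closed word $w$ to $\operatorname{tr}((B_n^+)^k)$ equals its contribution to $\operatorname{tr}(B_n^k)$ multiplied by the deterministic factor
\[
\prod_{i=1}^{k}\epsilon_{\sigma_i\sigma_{i+1}}=(-1)^{k}\prod_{i=1}^{k}c(\tau_i,\sigma_i)\,c(\tau_i,\sigma_{i+1}),
\]
so (ii) reduces to showing that the rightmost product equals $+1$ for every $w\in\widetilde{\mathcal{W}}_s^k$. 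The key observation is that the tree-like structure encoded by (\ref{eqn:iff W_s^k}) --- namely that each newly visited $d$-cell introduces exactly one new $0$-cell --- forces every pair $(\tau,\sigma)$ with $\sigma\subset\tau$ to be incident to an even number of walk-edges labelled by $\tau$, so the $\pm 1$ factors cancel in pairs and the product collapses to $+1$. Hence the $k$-th normalized moment of $B_n^+$ converges to $(-1)^k\beta_k(\lambda)$, which are the moments of $-\Gamma_d(\lambda)$, and the same Carleman/variance argument as above yields (ii).

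The main obstacle I anticipate is the sign calculation in part (ii): while each $d$-cell being visited exactly twice would make the tree-based parity argument immediate, in general $N_w(\tau)$ can exceed $2$ and a single $d$-cell contributes up to $\binom{d+1}{2}$ distinct edges of the line graph; the parity argument must therefore be organized carefully so that the multiset of $(\tau,\sigma)$-incidences is manifestly even regardless of how many face-to-face transitions the walk makes inside any given $\tau$. The variance bound required for the a.s.\ conclusion is technically delicate but follows the template of standard sparse trace-method computations, and the passage from moment convergence to weak convergence of distribution functions is handled cleanly by Lemma \ref{lemma: unique:beta_h}.
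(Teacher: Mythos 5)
Your part (i) follows the paper's proof essentially step for step: trace expansion over closed words, the order estimate $(np)^m n^{s-d-m}$ that isolates $\widetilde{\mathcal{W}}_s^k$, the exact class count from Lemma \ref{claim:card_[w]}, a variance bound plus Borel--Cantelli, and Lemma \ref{lemma: unique:beta_h} for uniqueness of the limit. One small caveat there: for pairs $(w_1,w_2)$ with intersecting $d$-supports you cannot literally apply Lemma \ref{claim: supp_0,supp_d} ``to the concatenation,'' since two closed walks sharing a $d$-cell need not concatenate into a single word; the inequality $|\operatorname{supp}_0(w_1,w_2)|\le|\operatorname{supp}_d(w_1,w_2)|+d$ requires the separate counting argument the paper supplies inside Lemma \ref{lemma: sum Var B_n}. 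That is a routine repair, and your claimed $O(n^{-2})$ bound is still summable.

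The genuine gap is the parity claim at the heart of part (ii). Your factorization $(B_n^+)_{\sigma\sigma'}=-c(\tau,\sigma)c(\tau,\sigma')\bigl(\mathbbm{1}[\tau\in Y]-p\bigr)$ and the extraction of the global $(-1)^k$ correctly reduce (ii) to showing $\prod_{i}c(\tau_i,\sigma_i)c(\tau_i,\sigma_{i+1})=+1$ for every $w\in\widetilde{\mathcal{W}}_s^k$, which is a reformulation of the paper's Lemma \ref{lemma: sgn(w)=(-1)^k}. But you then assert that the tree-like structure (\ref{eqn:iff W_s^k}) ``forces every pair $(\tau,\sigma)$ to be incident to an even number of walk-edges labelled by $\tau$,'' and you yourself flag this as the unresolved main obstacle. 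The assertion does not follow directly from (\ref{eqn:iff W_s^k}): the steps of $w$ inside a fixed $\tau$ form a union of excursions, i.e.\ open walks in the complete graph on the $d+1$ facets of $\tau$, and a union of open walks generically leaves odd-degree endpoints. What makes all degrees even is that each re-entry into $\tau$ occurs at the facet where the previous excursion exited, and that the first entry facet coincides with the last exit facet --- precisely Claims 1 and 2 in the paper's proof of Lemma \ref{lemma: sgn(w)=(-1)^k}, each established by a nontrivial contradiction argument using entry/exit times, removal of ``bushes,'' and the characterization (\ref{eqn:iff W_s^k}). Without some version of that argument the product could a priori equal $-1$ for some words, the odd limiting moments of $B_n^+$ would not be identified, and part (ii) remains an outline with its central lemma unproved.
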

We prove Theorem \ref{thm: centred LSD} for the signed and unsigned cases in subsection \ref{subsubsec: Proof of centred unsigned} and subsection \ref{subsubsec: Proof of centred signed}, respectively. In both cases, we first show that the expected empirical spectral distribution converges to $\Gamma_d(\lambda)$ (or $-\Gamma_d(\lambda)$). Then, using a concentration argument, we show that the empirical spectral distribution is sufficiently close to the expected empirical spectral distribution and therefore, both have the same limit. 
 \subsubsection{Proof of Theorem \ref{thm: centred LSD}: Unsigned adjacency matrix}\label{subsubsec: Proof of centred unsigned}

 In the following theorem, we establish the convergence of expected empirical spectral distribution for the centred unsigned adjacency matrix.
\begin{theorem}\label{thm:EESD_centred_unsigned}
For $np \rightarrow \lambda>0$, $\mathbb{E}F_{B_n}$, the expected empirical spectral distribution of the centred unsigned adjacency matrix $B_n$, converges weakly to $\Gamma_d(\lambda)$, the distribution uniquely determined by the moment sequence $\beta_{k}(\lambda)$.
\end{theorem}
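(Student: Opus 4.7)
The plan is to apply the classical method of moments. It suffices to show that for every integer $k \geq 1$,
$$\int x^k\, d\mathbb{E}F_{B_n}(x) \;=\; \binom{n}{d}^{-1}\mathbb{E}[\operatorname{tr}(B_n^k)] \;\longrightarrow\; \beta_k(\lambda)$$
as $np \to \lambda$. Combined with Lemma \ref{lemma: unique:beta_h}, which asserts that $\Gamma_d(\lambda)$ is the unique probability measure with moment sequence $\{\beta_k(\lambda)\}$, the Fr\'echet--Shohat theorem will yield the desired weak convergence $\mathbb{E}F_{B_n} \Rightarrow \Gamma_d(\lambda)$.

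First I would expand the trace as
$$\mathbb{E}[\operatorname{tr}(B_n^k)] \;=\; \sum_{(\sigma_1, \ldots, \sigma_k) \in (K^{d-1})^k} \mathbb{E}\Bigl[\prod_{i=1}^{k} (B_n)_{\sigma_i, \sigma_{i+1}}\Bigr]$$
with cyclic convention $\sigma_{k+1} = \sigma_1$. By Definition \ref{defn:adjacency matrix}, $(B_n)_{\sigma, \sigma'}$ vanishes unless $\sigma \cup \sigma'$ is a $d$-cell, in which case $(B_n)_{\sigma, \sigma'} = Y_{\sigma \cup \sigma'} - p$ with $Y_\tau := \mathbf{1}(\tau \in Y_d(n,p))$. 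Thus only tuples corresponding to closed words $w = \sigma_1 \cdots \sigma_k \sigma_1$ (in the sense of Definition \ref{defn: word}) survive. Since $\{Y_\tau : \tau \in K^d\}$ are independent centred Bernoullis, collecting repeated $d$-cells yields
$$\mathbb{E}\Bigl[\prod_{i=1}^{k} (B_n)_{\sigma_i, \sigma_{i+1}}\Bigr] \;=\; \prod_{\tau \in \operatorname{supp}_d(w)} \mathbb{E}\bigl[(Y_\tau - p)^{N_w(\tau)}\bigr],$$
which vanishes whenever some $N_w(\tau) = 1$; so only words $w \in \bigcup_s \mathcal{W}_s^k$ contribute. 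A direct expansion gives $\mathbb{E}[(Y - p)^m] = p + O(p^2)$ with leading coefficient exactly $1$ for every $m \geq 2$, hence the product equals $p^{\ell}(1 + O(p))$ where $\ell := |\operatorname{supp}_d(w)|$.

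Next I would group tuples by equivalence class $[w]$ and control each contribution. Setting $s := |\operatorname{supp}_0(w)|$, the contribution of class $[w]$ to $\binom{n}{d}^{-1}\mathbb{E}[\operatorname{tr}(B_n^k)]$ is at most
$$\frac{|[w]|}{\binom{n}{d}} \cdot p^\ell(1 + O(p)) \;=\; O\!\bigl(n^{s-d-\ell}(np)^{\ell}\bigr),$$
using the trivial bound $|[w]| \leq n^s/d!$. Lemma \ref{claim: supp_0,supp_d} gives $s \leq \ell + d$, with equality exactly when $w \in \widetilde{\mathcal{W}}_s^k$, so every class outside $\widetilde{\mathcal{W}}_s^k$ contributes $o(1)$. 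For $w \in \widetilde{\mathcal{W}}_s^k$, Lemma \ref{claim:card_[w]} gives the exact count $|[w]| = n!/((n-s)!\,d!)$, which combined with $np \to \lambda$ produces a contribution of exactly $\lambda^{s-d}$ per class. The uniform bound $|\mathcal{W}_s^k| \leq (ds)^k$ from Lemma \ref{claim:size W_k,s} justifies interchanging the limit with the finite, $n$-independent sum over equivalence classes, giving
$$\lim_{n\to\infty}\binom{n}{d}^{-1}\mathbb{E}[\operatorname{tr}(B_n^k)] \;=\; \sum_{s = d+1}^{\lfloor k/2 \rfloor + d} |\widetilde{\mathcal{W}}_s^k|\, \lambda^{s-d} \;=\; \beta_k(\lambda),$$
as required.

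The main obstacle is essentially combinatorial rather than analytic: one must verify that the leading coefficient of $\mathbb{E}[(Y-p)^m]$ in $p$ is independent of $m \geq 2$ (so that $|\widetilde{\mathcal{W}}_s^k|$ emerges as the exact coefficient of $\lambda^{s-d}$, and not a multiplicity-weighted variant), and track the vanishing of non-$\widetilde{\mathcal{W}}$ classes uniformly across the finitely many surviving classes. The word formalism of Definition \ref{defn: word} together with Lemmas \ref{claim: supp_0,supp_d}--\ref{claim:card_[w]} is designed precisely to make these points transparent; once set up, the moment convergence and subsequent Fr\'echet--Shohat application are routine.
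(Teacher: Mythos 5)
Your proposal is correct and follows essentially the same route as the paper's own proof: the same trace--moment expansion over closed words, the same factorization via independence of the $d$-cell indicators, the same split of equivalence classes into $\widetilde{\mathcal{W}}_s^k$ versus the rest using Lemmas \ref{claim: supp_0,supp_d}, \ref{claim:size W_k,s} and \ref{claim:card_[w]}, and the same conclusion via the method of moments with Lemma \ref{lemma: unique:beta_h}. The one point you highlight explicitly --- that $\mathbb{E}[(\chi-p)^m]=p(1+O(p))$ with leading coefficient exactly $1$ for all $m\geq 2$, so that $|\widetilde{\mathcal{W}}_s^k|$ appears unweighted --- is implicit in the paper's equation for $T(w)$ and is correctly verified.
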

\begin{proof}

For $k=0$, we have $\int_{\mathbb{R}} x^k\mu_{B_n}(dx)=1$.
For $k \geq 1$, 
\begin{equation}\label{eqn:k-th moment exp1}
\mathbb{E}\left[\int_{\mathbb{R}} x^{k} \mu_{B_n}(dx)\right]=\frac{1}{N} \sum_{\sigma_{1}, \ldots, \sigma_{k} \in K^{d-1}} \mathbb{E}\left[B_{\sigma_{1} \sigma_{2}} B_{\sigma_{2} \sigma_{3}} \ldots B_{\sigma_{k-1} \sigma_{k}} B_{\sigma_{k} \sigma_{1}}\right]
\end{equation}
where $B_{\sigma_i \sigma_{i+1}}$ is an element of the matrix $B_n$ indexed by $(d-1)$ cells and $N=\operatorname{dim}(B_n)$. By the definition of  $B_n$, $B_{\sigma_i \sigma_{i+1}}$ is non-zero only if $\sigma_i \cup \sigma_{i+1}$ is a $d$-cell in $Y$. As a result, the summation in (\ref{eqn:k-th moment exp1}) can be restricted to the summation over closed words of length $k+1$. Furthermore, for distinct $d$-cells $\tau,\tau^\prime$ and integers $i,j$ such that $\sigma_i \cup \sigma_{i+1}=\tau,\ \sigma_j \cup \sigma_{j+1}=\tau^\prime, $ the random variables  $B_{\sigma_i \sigma_{i+1}}$ and $B_{\sigma_j \sigma_{j+1}}$ are independent. Thus, we get
\begin{equation}\label{eqn:k-th moment exp2}
\mathbb{E}\left[\int_{\mathbb{R}} x^{k} \mu_{B_n}(dx)\right]=\frac{1}{N}\sum_{\substack{\text{closed words } w \\ \text{of length } k+1}} \prod_{\tau \in K^{d}} \mathbb{E}\left[(\chi-p)^{N_{w}(\tau)}\right],
\end{equation}
where $\chi$ is a Bernoulli random variable with parameter $p$ and $N_w(\tau)$ is as defined in Definition \ref{defn: word}.

Recall the equivalence relation defined on the set of words in Definition \ref{defn:equi_word}. For a word $w$ in (\ref{eqn:k-th moment exp2}), let $[w]$ denote the equivalence class containing $w$. For a closed word $w$, define
\begin{equation*}
T(w):= \prod_{\tau \in K^{d}} \mathbb{E}\left[(\chi-p)^{N_{w}(\tau)}\right].
\end{equation*}
Notice that $T(w^{\prime})= T(w)$ for all $w^{\prime} \in [w]$. 

Furthermore, if $N_w(\tau)=1$ for some $\tau \in K^d$, then $\mathbb{E}\left[(\chi-p)^{N_{w}(\tau)}\right]=0$. Also, $\mathbb{E}\left[(\chi-p)^{N_{w}(\tau)}\right]=1$ if $N_w(\tau)=0$ for some $\tau \in K^d$. Thus, the summation in (\ref{eqn:k-th moment exp2}) can be restricted to summation over closed words $w$ of length $k+1$ such that $N_w(\tau) \neq 1$ for all $\tau \in K^d$. 

Consider a word $w$ such that $N_w(\tau) \neq 1$ for all $\tau \in K^d$. Then $|\operatorname{supp}_d(w)|$ is bounded above by $\lfloor k/2 \rfloor$ and thus by Lemma \ref{claim: supp_0,supp_d}, $|\operatorname{supp}_0(w)|$ is bounded above by $\lfloor k/2 \rfloor + d$. Therefore (\ref{eqn:k-th moment exp2})  can be written as 
\begin{align}\label{eqn:k-th moment exp3}
\mathbb{E}\left[\int_{\mathbb{R}} x^{k} \mu_{B_n}(dx)\right]&=\frac{1}{N}\sum_{[w]:\substack{ N_w(\tau)\neq 1 \\ \forall \tau \in K^d}}|[w]| T(w)\nonumber\\
&=\frac{1}{N}\sum_{s=d+1}^{\left\lfloor\frac{k}{2}\right\rfloor+d} \sum_{[w] \in \mathcal{W}_{s}^{k}}|[w]| T(w) \nonumber\\
&= \frac{1}{N}\sum_{s=d+1}^{\left\lfloor\frac{k}{2}\right\rfloor+d} \left(\sum_{[w] \in \widetilde{\mathcal{W}}_{s}^{k}}|[w]| T(w)+\sum_{[w] \in \mathcal{W}_s^k\setminus \widetilde{\mathcal{W}}_s^k}|[w]| T(w)\right).
\end{align}

 First, note that $N={n \choose d}= O(n^d)$. Next consider the term $T(w)$ for $w \in \mathcal{W}_s^k$. Substituting the expression for moments of the Bernoulli random variable $\chi$, we get 
\begin{align}\label{eqn: prod_expectation}
	T(w)
	&= 
	\prod_{\tau \in K^{d} \atop N_{w}(\tau) \geq 2} p(1-p)\left[(1-p)^{N_{w}(\tau)-1}+(-1)^{N_{w}(\tau)} p^{N_{w}(\tau)-1}\right].
\end{align}
As $np \rightarrow \lambda>0$, $p^{N_w(\tau)-1} \rightarrow 0$ and $(1-p)^{N_{w}(\tau)-1} \rightarrow 1$. Thus each term in the product inside (\ref{eqn: prod_expectation}) is of the order $O(p)=O(1/n)$. Therefore for $[w] \in \mathcal{W}_s^k$,
\begin{align*}
	T(w)
	&= O\left(\frac{1}{n^{|\{\tau:N_{w}(\tau) \geq 2\}|}}\right)\\
	&= O\left( \frac{1}{n^{|\operatorname{supp}_d(w)|}}\right)= O\left( p^{|\operatorname{supp}_d(w)|}\right).
\end{align*}

Consider $[w] \in \mathcal{W}_s^k \setminus \widetilde{\mathcal{W}}_s^k$ for some $d+1 \leq s \leq \lfloor k/2 \rfloor +d$. Note that the number of different choices of $0$-cells in $\operatorname{supp}_0(w)$ is bounded by $n^s$ and therefore $|[w]| \leq n^s$.
Lemma \ref{claim:size W_k,s} implies that the summation in (\ref{eqn:k-th moment exp3}) is a finite sum. Furthermore, note that since $[w] \notin \widetilde{\mathcal{W}}_s^k$, we get from Lemma \ref{claim: supp_0,supp_d} that $|\operatorname{supp}_d(w)|>s-d$. As a result, it follows that
\begin{equation*}
	\frac{1}{N}\sum_{s=d+1}^{\left\lfloor\frac{k}{2}\right\rfloor+d} \sum_{[w] \in \mathcal{W}_s^k\setminus \widetilde{\mathcal{W}}_s^k}\hspace{-3mm}|[w]| T(w)=\hspace{-1mm} \sum_{s=d+1}^{\left\lfloor\frac{k}{2}\right\rfloor+d}O\left(\frac{1}{n^d} \times n^s \times \frac{1}{n^{|\operatorname{supp}_d(w)}|}\right)\hspace{-1mm}=\hspace{-1mm} \sum_{s=d+1}^{\left\lfloor\frac{k}{2}\right\rfloor+d}O(n^{s-d-|\operatorname{supp}_d(w)|})=o(1).
\end{equation*}

For $[w] \in \widetilde{\mathcal{W}}_s^k$, $|\operatorname{supp}_d(w)|=s-d$ and from Lemma \ref{claim:card_[w]} we have that $|[w]|=\frac{n!}{(n-s)!d!}$. This implies that
\begin{align}
\lim_{n \rightarrow \infty}\mathbb{E}\left[\int_{\mathbb{R}} x^{k} \mu_{B_n}(dx)\right]&= \lim_{n \rightarrow \infty} \sum_{s=d+1}^{\left\lfloor\frac{k}{2}\right\rfloor+d} \frac{\frac{n!}{(n-s)!d!}}{\frac{n!}{d!(n-d)!}}\sum_{w \in \widetilde{\mathcal{W}}_s^k} T(w)\nonumber\\
&= \lim_{n \rightarrow \infty}\sum_{s=d+1}^{\left\lfloor\frac{k}{2}\right\rfloor+d}\frac{n^s}{n^d}\sum_{w \in \widetilde{\mathcal{W}}_s^k}p^{s-d}\nonumber\\
&=\sum_{s=d+1}^{\left\lfloor\frac{k}{2}\right\rfloor+d} \lambda^{s-d}\left|\widetilde{\mathcal{W}}_s^k\right| = \beta_k \label{eqn: moment_final}.
\end{align}
Since $\beta_{k}$ is the limiting moment of the sequence of $k$-th moments of $\mu_{B_n}$, it follows that the sequence $\{\beta_k\}$ is a positive semi-definite sequence and thus there exist distributions on $\R$ with moment sequence $\{\beta_{k}\}$. By Lemma \ref{lemma: unique:beta_h} and the method of moments, we obtain that the expected empirical spectral distribution of $B_n$ converges to $\Gamma_d(\lambda)$. 
\end{proof}

Now we proceed to show the convergence of ESD using a concentration inequality based argument. In particular, we show that the $k$-th moment of $B_n$, $m_k(B_n)$ converge to $\beta_k$ almost surely.

\begin{lemma}\label{lemma: sum Var B_n}
For $d \geq 2, n \geq d+1$, let $B_n$ denote the unsigned adjacency matrix of $Y_d(n,p)$ and let $m_k(B_n)$ denote the $k-$th moment of $B_n$. Then for $np \rightarrow \lambda>0$,
$\lim_{n \rightarrow \infty} \mathbb{E} \left[m_k(B_n)- \mathbb{E}m_k(B_n)\right]^2 =O(1/n^d)$ for every positive integer $k$.
\end{lemma}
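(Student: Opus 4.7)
The plan is to expand the variance as a sum over pairs of closed words and exploit the independence of entries of $B_n$ to kill most terms. Writing $B_{\sigma\sigma'} = X_{\sigma\cup\sigma'}$, where $\{X_\tau\}_{\tau\in K^d}$ are independent centred Bernoulli$(p)$ random variables, we have
$$\mathbb{E}\l[m_k(B_n)-\mathbb{E}m_k(B_n)\r]^2 = \frac{1}{N^2}\sum_{w_1,w_2}\mathrm{Cov}(T(w_1),T(w_2)),$$
summed over pairs of closed words of length $k+1$, with $T(w) = \prod_\tau X_\tau^{N_w(\tau)}$ and $N=\binom{n}{d}$. Independence of the $X_\tau$ across distinct $\tau$ forces $\mathrm{Cov}(T(w_1),T(w_2)) = 0$ whenever $\operatorname{supp}_d(w_1)\cap\operatorname{supp}_d(w_2)=\emptyset$, so the relevant sum is over pairs sharing at least one $d$-cell.

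For such pairs, $\mathbb{E}[X_\tau^m] = 0$ for $m=1$ and $|\mathbb{E}[X_\tau^m]| = O(p)$ for $m \geq 2$ (as $np \to \lambda$) yield
$$|\mathrm{Cov}(T(w_1),T(w_2))| \leq |\mathbb{E}[T(w_1)T(w_2)]| + |\mathbb{E}[T(w_1)]||\mathbb{E}[T(w_2)]| = O\l(p^R\r),$$
where $R = |\operatorname{supp}_d(w_1)\cup\operatorname{supp}_d(w_2)|$, since each $d$-cell in the union contributes a single factor of $O(p)$ (or makes the expectation vanish). Grouping labelled pairs by their joint shape (equivalence class under simultaneous vertex relabelling), the number of shapes is bounded by a constant depending only on $k$ and $d$, and each shape on $s = |\operatorname{supp}_0(w_1)\cup\operatorname{supp}_0(w_2)|$ vertices has $O(n^s)$ labelled representatives.

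The crucial combinatorial input is a joint analog of Lemma \ref{claim: supp_0,supp_d}: whenever $\operatorname{supp}_d(w_1)\cap\operatorname{supp}_d(w_2)\neq\emptyset$, one has $s \leq R+d$. To prove this, cyclically shift the closed word $w_2$ so that its first step uses a shared $d$-cell, i.e.\ $\sigma_1'\cup\sigma_2' = \tau_0 \in \operatorname{supp}_d(w_1)\cap\operatorname{supp}_d(w_2)$; this is legal because $w_2$ is closed and $T(w_2)$ is invariant under cyclic rotation. Then $\sigma_1',\sigma_2'\subset\tau_0\subset\operatorname{supp}_0(w_1)$, and appending $w_2$ to $w_1$ one step at a time, the case analysis from the proof of Lemma \ref{claim: supp_0,supp_d} shows that each subsequent step of $w_2$ enlarges the running vertex set if and only if it enlarges the running $d$-cell set. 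Hence $|\operatorname{supp}_0(w_2)\setminus\operatorname{supp}_0(w_1)| \leq |\operatorname{supp}_d(w_2)\setminus\operatorname{supp}_d(w_1)|$, and combining with $|\operatorname{supp}_0(w_1)|\leq|\operatorname{supp}_d(w_1)|+d$ gives $s\leq R+d$.

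Putting everything together,
$$\mathbb{E}\l[m_k(B_n)-\mathbb{E}m_k(B_n)\r]^2 = \frac{1}{N^2}\sum_{\text{shapes}}O(n^s)\cdot O(n^{-R}) = O\l(n^{s-R-2d}\r) = O\l(n^{-d}\r),$$
as required. The main obstacle is the joint support bound $s \leq R+d$: without the shared $d$-cell hypothesis the naive bound is only $s \leq R + 2d$ (two independent walks), so the cyclic shift trick is precisely what recovers the extra factor of $n^d$ needed for the claimed $O(n^{-d})$ decay.
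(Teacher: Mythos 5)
Your proof is correct and follows essentially the same route as the paper: the same covariance expansion over pairs of closed words, the same vanishing of the summand when $\operatorname{supp}_d(w_1)\cap\operatorname{supp}_d(w_2)=\emptyset$, and the same key joint support bound $s\leq R+d$ leading to $O(n^{s-R-2d})=O(n^{-d})$. The only (cosmetic) difference is in proving that joint bound: you cyclically rotate the closed word $w_2$ so that it starts at a shared $d$-cell and then traverse forward, whereas the paper counts outward from the first occurrence of a shared $d$-cell in $w_2$ (backwards to $\sigma_1^{(2)}$, then forwards to $\sigma_{k+1}^{(2)}$) --- both implement the same ``each new $0$-cell forces a new $d$-cell'' argument.
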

\begin{proof}
We have
\begin{align}\label{eqn: m_k(B_n)}
m_k(B_n)&=\frac{1}{N} \sum_{\sigma_{1}, \ldots, \sigma_{k} \in K^{d-1}} B_{\sigma_{1} \sigma_{2}} B_{\sigma_{2} \sigma_{3}} \ldots B_{\sigma_{k-1} \sigma_{k}} B_{\sigma_{k} \sigma_{1}} \nonumber\\
&=\frac{1}{N} \sum_{w}  \prod_{\tau \in K^d} B_{\tau}^{N_w(\tau)}, 
\end{align}
where the summation is over closed words of length $k+1$ on the vertex set $V=\{1,2,\ldots,n\}$, and $\{B_{\tau}\}_{\tau \in K^d}$ are independent random variables with distribution Ber($p$). From (\ref{eqn: m_k(B_n)}), we get
\begin{equation}\label{eqn: m_k(B_n) var}
\mathbb{E} \left[m_k(B_n)- \mathbb{E}m_k(B_n)\right]^2= \frac{1}{N^2} \sum_{w_1, w_2}  \left(\prod_{\tau \in K^d} \mathbb{E} B_{\tau}^{N_{w_1}(\tau)+N_{w_2}(\tau)}- \prod_{\tau \in K^d} \mathbb{E} B_{\tau}^{N_{w_1}(\tau)}\mathbb{E} B_{\tau}^{N_{w_2}(\tau)}\right),
\end{equation}
where $w_1, w_2$ are  closed words of length $k+1$ on the vertex set $V$. 

For closed words $w_1,w_2$, we introduce the notations, $\operatorname{supp}_0(w_1,w_2)= \operatorname{supp}_0(w_1) \cup \operatorname{supp}_0(w_2)$ and $\operatorname{supp}_d(w_1,w_2)= \operatorname{supp}_d(w_1) \cup \operatorname{supp}_d(w_2)$. We claim that for closed words $w_1, w_2$ such that $\operatorname{supp}_d(w_1) \cap \operatorname{supp}_d(w_2)$ is empty, the summand in (\ref{eqn: m_k(B_n) var}) is zero. To see this, notice that for any $\tau \in \operatorname{supp}_d(w_1,w_2)$ such that $\tau$ belong to only one of $\operatorname{supp}_d(w_1)$ or $\operatorname{supp}_d(w_2)$, $\mathbb{E} B_{\tau}^{N_{w_1}(\tau)+N_{w_2}(\tau)}= \mathbb{E} B_{\tau}^{N_{w_1}(\tau)}\mathbb{E} B_{\tau}^{N_{w_2}(\tau)}$. Thus, if $\operatorname{supp}_d(w_1) \cap \operatorname{supp}_d(w_2) = \emptyset$, 
$$\prod_{\tau \in K^d} \mathbb{E} B_{\tau}^{N_{w_1}(\tau)+N_{w_2}(\tau)}= \prod_{\tau \in K^d} \mathbb{E} B_{\tau}^{N_{w_1}(\tau)}\mathbb{E} B_{\tau}^{N_{w_2}(\tau)}.$$ Therefore the summand in (\ref{eqn: m_k(B_n) var}) is equal to zero in this case. Hence $(\ref{eqn: m_k(B_n) var})$ can be written as,

\begin{equation}\label{eqn: Var m_k, two terms}
\mathbb{E} \left[m_k(B_n)- \mathbb{E}m_k(B_n)\right]^2= \frac{1}{N^2} \hspace{-7mm} \sum_{w_1, w_2 \atop \operatorname{supp}_d(w_1) \cap \operatorname{supp}_d(w_2) \neq \phi } \hspace{-3mm} \left(\prod_{\tau \in K^d}\hspace{-2mm} \mathbb{E} B_{\tau}^{N_{w_1}(\tau)+N_{w_2}(\tau)}\hspace{-2mm} - \prod_{\tau \in K^d} \hspace{-2mm} \mathbb{E} B_{\tau}^{N_{w_1}(\tau)}\mathbb{E} B_{\tau}^{N_{w_2}(\tau)}\right).
\end{equation}

Consider $\tau \in \operatorname{supp}_d(w_1) \cap \operatorname{supp}_d(w_2)$. Since $B_\tau=(\chi_\tau-p)$, where $\chi_\tau$ is a Bernoulli random variables, it follows that $\mathbb{E}B_{\tau}^r=O(p)=O(1/n)$ for all $r \geq 1$. Hence $\mathbb{E}B_{\tau}^{N_{w_1}(\tau)+N_{w_2}(\tau)}=O(1/n)$ and $\mathbb{E}B_{\tau}^{N_{w_1}(\tau)}\mathbb{E}B_{\tau}^{N_{w_2}(\tau)}=O(1/n^2)$.

Consider $w_1,w_2$ such that $\operatorname{supp}_d(w_1) \cap \operatorname{supp}_d(w_2)$ is non-empty. It follows that for $\tau \in \operatorname{supp}_d(w_1) \cap \operatorname{supp}_d(w_2)$, $\mathbb{E}B_{\tau}^{N_{w_1}(\tau)}\mathbb{E}B_{\tau}^{N_{w_2}(\tau)}= O\left(\frac{1}{n} \times\mathbb{E}B_{\tau}^{N_{w_1}(\tau)+N_{w_2}(\tau)}\right)$. Hence, if 
\begin{equation}\label{eqn:B_n var iff}
	\lim_{n \rightarrow \infty} \frac{1}{N^2} \sum_{w_1, w_2 \atop \operatorname{supp}_d(w_1) \cap \operatorname{supp}_d(w_2) \neq \phi }  \prod_{\tau \in K^d} \mathbb{E} B_{\tau}^{N_{w_1}(\tau)+N_{w_2}(\tau)} = O\left(\frac{1}{n^d}\right),
\end{equation}
then $\lim_{n \rightarrow \infty}\mathbb{E} \left[m_k(B_n)- \mathbb{E}m_k(B_n)\right]^2 =O(1/n^d).$ Therefore, to prove the lemma, it is sufficient to prove (\ref{eqn:B_n var iff}). We make the following claim:\\\\
\textbf{Claim:}
 For closed words $w_1, w_2$ of length $k+1$ with $\operatorname{supp}_d(w_1) \cap \operatorname{supp}_d(w_2) \neq \phi$
	\begin{equation*}
		d \leq |\operatorname{supp}_0(w_1,w_2)| \leq |\operatorname{supp}_d(w_1,w_2)|+d.
	\end{equation*} 
\textit{Proof of Claim:} It is clear that $d \leq |\operatorname{supp}_0(w_1,w_2)|$ and therefore it is only required to prove the second inequality. Consider two words $w_1=\sigma_{1}^{(1)} \sigma_{2}^{(1)} \cdots \sigma_{k+1}^{(1)}$ and $w_2=\sigma_{1}^{(2)} \sigma_{2}^{(2)} \cdots \sigma_{k+1}^{(2)}$ such that $w_1,w_2$ are closed and $\operatorname{supp}_d(w_1) \cap \operatorname{supp}_d(w_2) \neq \phi$.
We start by counting the number of elements in $\operatorname{supp}_0(w_1)$ and $\operatorname{supp}_d(w_1)$. From Lemma \ref{claim: supp_0,supp_d}, we have 
 \begin{equation}\label{eqn:supp_d w1}
  |\operatorname{supp}_0(w_1)| \leq |\operatorname{supp}_d(w_1)|+d.
 \end{equation}

Now we count the additional elements in $\operatorname{supp}_0(w_2)$ and $\operatorname{supp}_d(w_2)$ that have not been counted yet. Consider a $d$-cell $\tau \in \operatorname{supp}_d(w_1) \cap \operatorname{supp}_d(w_2)$ and let $i$ be the smallest integer such that $\sigma_i^{(2)}\cup\sigma_{i+1}^{(2)}=\tau$. Note that as $\tau \in \operatorname{supp}_d(w_1)$, the $d$-cell $\tau$ is already counted once. Furthermore, this also implies that all the 0-cells in $\tau$ has also been counted.

To count the remaining $0$-cells and $d$-cells, we start by counting the new $0$-cells in $\sigma_{i-1}^{(2)}$ and proceed in descending order till $\sigma_{1}^{(2)}$. Note that if $\tau^\prime=\sigma_{j}^{(2)} \cup \sigma_{j-1}^{(2)}$ contains a new $0$-cell that is not yet counted, then the $d$-cell $\tau^\prime$ has also not been counted yet. After reaching $\sigma_{1}^{(2)}$, we start counting the new $0$-cells in $\sigma_{i+2}^{(2)}$ and proceed in ascending order till $\sigma_{k+1}^{(2)}$. Note that in this case, if  $\tau^\prime=\sigma_{j}^{(2)} \cup \sigma_{j+1}^{(2)}$ contains a new $0$-cell, then the $d$-cell $\tau^\prime$ is also new. In short, for all new $0$-cell that appears in $\operatorname{supp}_0(w_2)$, associated to it, a new $d$-cell appears in $\operatorname{supp}_d(w_2)$. Using this information along with (\ref{eqn:supp_d w1}), we get
$|\operatorname{supp}_0(w_1,w_2)| \leq |\operatorname{supp}_d(w_1,w_2)|+d$.
 \hfill\qedsymbol
\\\\
We now define the equivalence between tuples of words $(w_1,w_2)$ and $(w_1^\prime,w_2^\prime)$. Two 2-tuples of words $(w_1,w_2)$ and $(w_1^\prime,w_2^\prime)$ are said to be equivalent if there exists a map $\pi: \operatorname{supp}_0(w_1,w_2) \rightarrow  \operatorname{supp}_0(w_1^\prime
,w_2^\prime) $ such that the restrictions $ \pi\big|_{\operatorname{supp}_0(w_1)}$ defines an equivalence between $w_1$ and $w_1^\prime$, and  $\pi\big|_{\operatorname{supp}_0(w_2)}$ is a bijection from $\operatorname{supp}_0(w_2)$ to $\operatorname{supp}_0(w_2^\prime)$. Now, note that the left hand side of (\ref{eqn:B_n var iff}) can be written as 
\begin{align*}
\frac{1}{N^2} \hspace{-4mm}\sum_{w_1, w_2 \atop \operatorname{supp}_d(w_1) \cap \operatorname{supp}_d(w_2) \neq \phi }  \prod_{\tau \in K^d} \mathbb{E} B_{\tau}^{N_{w_1}(\tau)+N_{w_2}(\tau)} &= \sum_{s=d}^{k+d} \sum_{(w_1,w_2) \in W_2(k,s,d)} \hspace{-4mm}\frac{|[(w_1,w_2)]|}{N^2}\prod_{\tau \in K^d} \mathbb{E} B_{\tau}^{N_{w_1}(\tau)+N_{w_2}(\tau)},
\end{align*}
 where $W_2(k,s,d)$ is the set of all equivalence classes of words $(w_1,w_2)$ such that $w_1,w_2$ are closed words of length $k+1$, $|\operatorname{supp}_0(w_1,w_2)|=s $ and $\operatorname{supp}_d(w_1) \cap \operatorname{supp}_d(w_2) \neq \phi$.
 
Since $\mathbb{E} B_{\tau}^{N_{w_1}(\tau)+N_{w_2}(\tau)}= O(1/n)$ for all $\tau$ and all choices of $N_{w_1}(\tau)+N_{w_2}(\tau)$, it follows that $\prod_{\tau \in K^d} \mathbb{E} B_{\tau}^{N_{w_1}(\tau)+N_{w_2}(\tau)}= O(n^{-|\operatorname{supp}_d(w_1,w_2)|})$. Furthermore, for each $(w_1,w_2) \in W_2(k,s,d)$, $|[(w_1,w_2)]| \leq O(n^s)$. Thus for each fixed $s$, the summand is of order $O\left(\frac{n^s}{n^{2d}} \times n^{-|\supp_d(w_1,w_2)|}\right)$. 

By the claim, we have $s-|\operatorname{supp}_d(w_1,w_2)|-2d \leq -d$ and therefore
 \begin{equation*}
 	 \sum_{s=d}^{k+d} \sum_{(w_1,w_2) \in W_2(k,s,d)} \frac{[(w_1,w_2)]}{N^2}\prod_{\tau \in K^d} \mathbb{E} B_{\tau}^{N_{w_1}(\tau)+N_{w_2}(\tau)}= O(n^{-d}).
 \end{equation*}
This completes the proof of the lemma.
\end{proof}
\begin{proof}[Proof of Theorem \ref{thm: centred LSD} (i)]
	From Theorem \ref{thm:EESD_centred_unsigned}, it follows that for all $k$, the $k$-th moment of the EESD, $m_k(\mathbb{E} F_{B_n})$ converges to $\beta_{k}(\lambda)$ as $np \rightarrow \lambda$. By Lemma \ref{thm:EESD_centred_unsigned} and Borel-Cantelli lemma, it follows that the $k$-th moment of ESD, $m_k( F_{B_n})$ converge almost surely to $\beta_{k}(\lambda),$ implying the required result.
\end{proof}

\subsubsection{Proof of Theorem \ref{thm: centred LSD}: Signed adjacency matrix}\label{subsubsec: Proof of centred signed}

 We introduce a definition and a lemma required to prove the convergence of EESD of the signed adjacency matrix. The proof of this lemma is pushed to the next subsection.
\begin{definition}\label{defn:sgn}
	Let $Y$ be a random simplicial complex of dimension $d$, $w$ be a closed word of length $k+1$ and $\tau \in \operatorname{supp}_d(w)$, we define the set $\widehat{E}_w(\tau)=\{(i,i+1):\sigma_i \cup\sigma_{i+1}=\tau \text{ and } \sigma_i \stackrel{Y}{\sim} \sigma_{i+1}\}$. Further, we define
	\begin{align*}
		&\sgn(w,\tau)= (-1)^{ |\widehat{E}_w(\tau)|} \text{ and }\\
		&\sgn(w)= \prod_{\tau \in \operatorname{supp}_d(w)}(-1)^{ |\widehat{E}_w(\tau)|}.
	\end{align*} 
\end{definition}
\begin{lemma}\label{lemma: sgn(w)=(-1)^k}
	Let $d \geq 2, k \in \mathbb{Z}^+$ and $d+1 \leq s \leq \lfloor \frac{k}{2}\rfloor +d$, consider a closed word $w$ of length $k+1$ such that $|\operatorname{supp}_0(w)|=s$ and $|\operatorname{supp}_d(w)|=s-d$. Then 
	$\sgn(w)=(-1)^k$.
\end{lemma}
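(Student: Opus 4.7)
My plan is to establish the identity via a parity argument on a bipartite tree naturally attached to $w$. The first step is to rewrite $\sgn(w)$ in terms of the incidence coefficients $[\sigma:\tau]\in\{\pm1\}$ coming from the oriented boundary $\partial\tau=\sum_{\sigma\subset\tau}[\sigma:\tau]\,\sigma$. Checking the definition of $\stackrel{Y}{\sim}$ against the two orientations of $\tau$ shows that, for $\sigma_i\cup\sigma_{i+1}=\tau_i$, the condition $\sigma_i\stackrel{Y}{\sim}\sigma_{i+1}$ is equivalent to a fixed sign condition on $[\sigma_i:\tau_i][\sigma_{i+1}:\tau_i]$. Telescoping the corresponding indicators over the $k$ transitions of $w$ and then over $\tau\in\operatorname{supp}_d(w)$ gives an identity of the form
\begin{equation*}
\sgn(w)=(-1)^{k}\prod_{i=1}^{k}[\sigma_i:\tau_i][\sigma_{i+1}:\tau_i],
\end{equation*}
where the prefactor $(-1)^{k}$ records the relation between the $\stackrel{Y}{\sim}$-indicator and the product of incidence signs along a single transition.

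The main step is to show that the product on the right equals $1$ for every $w\in\widetilde{\mathcal{W}}_s^k$. For this I introduce the bipartite graph $H_w$ with vertex classes $\operatorname{supp}_d(w)$ and $V_{d-1}(w)$, where $V_{d-1}(w)$ is the set of all $(d-1)$-subfaces of cells in $\operatorname{supp}_d(w)$, and with edges encoding the face relation. I claim that $H_w$ is a tree. The condition $|\operatorname{supp}_d(w)|=s-d$, unpacked through the characterization~(\ref{eqn:iff W_s^k}), forces that each new $d$-cell appearing while scanning $w$ introduces exactly one new $0$-cell and therefore shares a full $(d-1)$-face with the union of the cells seen earlier; this gives $|V_{d-1}(w)|=d(s-d)+1$ and $|E(H_w)|=(d+1)(s-d)$. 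Connectivity of $H_w$ is immediate because the walk $w$ links all $d$-cells to one another, and the Euler identity $|V(H_w)|-|E(H_w)|=1$ then shows that $H_w$ is a tree.

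The word $w$ now lifts to a closed walk of length $2k$ on $H_w$,
\begin{equation*}
\sigma_1\to\tau_1\to\sigma_2\to\tau_2\to\cdots\to\sigma_k\to\tau_k\to\sigma_{k+1}=\sigma_1,
\end{equation*}
alternating between the two vertex classes. A standard fact is that every closed walk on a tree traverses each edge an even number of times. Regrouping the product indexed by $i$ by the edges of $H_w$ gives
\begin{equation*}
\prod_{i=1}^{k}[\sigma_i:\tau_i][\sigma_{i+1}:\tau_i]=\prod_{\{\sigma,\tau\}\in E(H_w)}[\sigma:\tau]^{m(\sigma,\tau)}=1,
\end{equation*}
because every exponent $m(\sigma,\tau)$ is even. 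Substituting into the identity from the first step produces $\sgn(w)=(-1)^{k}$.

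The main obstacle I anticipate is the rigorous justification that $H_w$ is a tree, since the counting argument requires checking that no cycle is created in the hypergraph when each successive $d$-cell is adjoined. The crucial input is the if-and-only-if form of~(\ref{eqn:iff W_s^k}), which forces the shared face at every stage to be a genuine $(d-1)$-face of the current complex (rather than a smaller intersection that could close a cycle). The sign bookkeeping in the first step is routine but must be handled with care, because the final $(-1)^{k}$ is sensitive to how the boundary orientations interact with the $\stackrel{Y}{\sim}$ relation.
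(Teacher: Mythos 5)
Your proof is correct, but it takes a genuinely different route from the paper's. The paper works locally inside each $d$-cell $\tau$: it introduces entry and exit times $S_m(\tau),T_m(\tau)$, proves by a minimal-counterexample argument two structural claims (the walk always re-enters $\tau$ through the face it last exited, and the first-entry face equals the last-exit face), and then counts same-parity versus different-parity jumps within $\tau$ to get $\sgn(w,\tau)=(-1)^{N_w(\tau)}$, whence $\sgn(w)=(-1)^{\sum_\tau N_w(\tau)}=(-1)^k$. You instead globalize: after rewriting the sign of each transition as $-[\sigma_i:\tau_i][\sigma_{i+1}:\tau_i]$, everything reduces to showing that the incidence product over the walk equals $1$, which you get by proving that the face-incidence bipartite graph $H_w$ is a tree (via the count $|V_{d-1}(w)|=d(s-d)+1$, $|E(H_w)|=(d+1)(s-d)$, and connectivity, all forced by the characterization~(\ref{eqn:iff W_s^k}) of $\widetilde{\mathcal{W}}_s^k$) and invoking the fact that a closed walk on a tree traverses every edge an even number of times. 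Your vertex count is right: the unique face of a newly appearing $d$-cell not containing its new $0$-cell is the $(d-1)$-cell $\sigma_i$ already visited by the walk, and the other $d$ faces are genuinely new because they contain the new $0$-cell. The tree property in fact subsumes the paper's Claims~1 and~2 (a violation of either would close a cycle in $H_w$ through the offending $\tau$), so your argument is arguably cleaner and does all the bookkeeping in one stroke. One small point of convention: your prefactor $(-1)^k$ corresponds to reading $\sgn(w,\tau)$ as the product of the signs of the matrix entries, i.e.\ $(-1)^{\#\{\text{same-parity jumps}\}}$, which is the form actually used in the paper's proof and the one the moment computation requires; a literal reading of Definition~\ref{defn:sgn} (which counts the transitions where $\sigma_i\stackrel{Y}{\sim}\sigma_{i+1}$, i.e.\ the different-parity ones) would flip this sign, but that is an internal inconsistency of the paper's definition rather than a flaw in your argument.
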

Now we prove the convergence of expected empirical spectral distribution for centred signed adjacency matrices.

\begin{theorem}\label{thm:signed EESD convergence}
	For $np \rightarrow \lambda>0$, $\mathbb{E}F_{B_n^+}$, the expected empirical spectral distribution of the centred signed adjacency matrix $B_n^+$ of $Y_d(n,p)$, converges  weakly to $-\Gamma_d(\lambda)$.
\end{theorem}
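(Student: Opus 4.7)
The plan is to mirror the moment method proof of Theorem \ref{thm:EESD_centred_unsigned}, with the only substantive change being a sign bookkeeping that is handled by Lemma \ref{lemma: sgn(w)=(-1)^k}. The starting observation is that each entry of $A_n^+$ factors as $A^+_{\sigma,\sigma'} = \epsilon_{\sigma,\sigma'}\chi_\tau$, where $\tau = \sigma\cup\sigma'$, $\chi_\tau \sim \operatorname{Ber}(p)$, and $\epsilon_{\sigma,\sigma'}\in\{-1,+1\}$ is the deterministic sign determined by the two orientations of $\tau$ (equivalently, by whether $\sigma \stackrel{Y}{\sim}\sigma'$ or $\sigma \stackrel{Y}{\sim}\bar{\sigma'}$, given $\tau\in Y$). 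Consequently $B^+_{\sigma,\sigma'} = \epsilon_{\sigma,\sigma'}(\chi_\tau-p)$, so the magnitudes coincide with those of the unsigned problem and only a multiplicative sign is introduced per edge of a word.

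Expanding $\mathbb{E}\bigl[\int x^k\,\mu_{B_n^+}(dx)\bigr]$ as in equations (\ref{eqn:k-th moment exp1})--(\ref{eqn:k-th moment exp2}) of the unsigned proof, I would collect the cumulative sign $\prod_{i=1}^k \epsilon_{\sigma_i,\sigma_{i+1}}$ into the word-level quantity $\sgn(w)$ of Definition \ref{defn:sgn}, yielding
\begin{equation*}
\mathbb{E}\Bigl[\int x^k\,\mu_{B_n^+}(dx)\Bigr] = \frac{1}{N}\sum_{w}\sgn(w)\prod_{\tau\in K^d}\mathbb{E}\bigl[(\chi-p)^{N_w(\tau)}\bigr],
\end{equation*}
where the sum runs over closed words of length $k+1$ on $\{1,\dots,n\}$. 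The probabilistic reductions now go through verbatim: vanishing of terms with some $N_w(\tau)=1$, the $o(1)$ bound on the contribution of equivalence classes $[w]\in \mathcal{W}_s^k\setminus\widetilde{\mathcal{W}}_s^k$ via Lemma \ref{claim: supp_0,supp_d} and the $(ds)^k$ bound of Lemma \ref{claim:size W_k,s}, and the asymptotic identification of the dominant contribution with $s\in\{d+1,\dots,\lfloor k/2\rfloor+d\}$ and $[w]\in\widetilde{\mathcal{W}}_s^k$, each class of cardinality $n!/((n-s)!\,d!)$ by Lemma \ref{claim:card_[w]}. Since the expectation factor satisfies $\prod_\tau \mathbb{E}[(\chi-p)^{N_w(\tau)}] \sim p^{s-d}$ for $w\in\widetilde{\mathcal{W}}_s^k$, the computation reduces, modulo signs, to the one carried out in (\ref{eqn:k-th moment exp3})--(\ref{eqn: moment_final}).

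The new ingredient enters here: Lemma \ref{lemma: sgn(w)=(-1)^k} tells us that $\sgn(w)=(-1)^k$ uniformly over $w\in\widetilde{\mathcal{W}}_s^k$, so the leading-order calculation becomes
\begin{equation*}
\lim_{n\to\infty}\mathbb{E}\Bigl[\int x^k\,\mu_{B_n^+}(dx)\Bigr] = (-1)^k\sum_{s=d+1}^{\lfloor k/2\rfloor+d}\lambda^{s-d}\,|\widetilde{\mathcal{W}}_s^k| = (-1)^k\beta_k(\lambda).
\end{equation*}
If $X\sim\Gamma_d(\lambda)$, then $\mathbb{E}(-X)^k=(-1)^k\beta_k(\lambda)$, so $\{(-1)^k\beta_k(\lambda)\}$ is exactly the moment sequence of $-\Gamma_d(\lambda)$. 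Moreover, the Carleman bound in Lemma \ref{lemma: unique:beta_h} is insensitive to sign, so this moment sequence still uniquely determines a probability measure on $\mathbb{R}$. The method of moments then gives $\mathbb{E}F_{B_n^+}\Rightarrow -\Gamma_d(\lambda)$.

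The main obstacle is not the combinatorial/probabilistic accounting, which runs parallel to the unsigned case, but the sign invariance itself: one needs $\sgn(w)$ to depend only on the equivalence class $[w]$ (so that it can be pulled out of the sum over $[w]\in\widetilde{\mathcal{W}}_s^k$) and to equal $(-1)^k$ on every class in $\widetilde{\mathcal{W}}_s^k$. Both are delivered by Lemma \ref{lemma: sgn(w)=(-1)^k}, and once this lemma is assumed, the remainder of the proof is a direct transcription of the unsigned argument.
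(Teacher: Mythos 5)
Your proposal is correct and follows essentially the same route as the paper: expand the $k$-th moment over closed words, observe that each entry of $B_n^+$ is a deterministic sign times the corresponding centred Bernoulli variable so that the unsigned reductions (vanishing of words with $N_w(\tau)=1$, the $o(1)$ contribution of $\mathcal{W}_s^k\setminus\widetilde{\mathcal{W}}_s^k$, and the counting via Lemma \ref{claim:card_[w]}) carry over verbatim, and then invoke Lemma \ref{lemma: sgn(w)=(-1)^k} to extract the factor $(-1)^k$ and identify the limit $(-1)^k\beta_k(\lambda)$ as the moment sequence of $-\Gamma_d(\lambda)$. This is exactly the paper's argument, with your remarks on class-invariance of $\sgn(w)$ and the sign-insensitivity of the Carleman bound making explicit what the paper leaves implicit.
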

\begin{proof}
	Note that for even $k$, the $k$-th moment of $-\Gamma_d(\lambda)$ is $\beta_k$ and for odd $k$, the $k-$th moment is $-\beta_k$. Hence by the method of moments, it is sufficient to show that 
	\begin{equation*}
		\lim_{n \rightarrow \infty}\mathbb{E}\left[\int_{\mathbb{R}} x^{k} \mu_{B_n^+}(dx)\right]= (-1)^k \beta_k \text{ for all } k.
	\end{equation*} 
	By the same reasoning employed in Theorem \ref{thm:EESD_centred_unsigned} to obtain (\ref{eqn:k-th moment exp2}), we get,
	\begin{align*}
		\mathbb{E}\left[\int_{\mathbb{R}} x^{k} \mu_{B_n^+}(dx)\right] &=\frac{1}{N}\sum_{\substack{\text{closed words } w :\\ N_w(\tau)\neq 1, \forall \tau \in K^d}} \prod_{\tau \in K^{d}} \mathbb{E}\left[(\chi-p)^{N_{w}(\tau)}\right]\sgn(w,\tau) \\
		&= \frac{1}{N}\sum_{\substack{\text{closed words } w :\\ N_w(\tau)\neq 1, \forall \tau \in K^d}} \left( \prod_{\tau \in K^{d}} \mathbb{E}\left[(\chi-p)^{N_{w}(\tau)}\right] \prod_{\tau \in K^{d}}\sgn(w,\tau) \right),
	\end{align*}
	where $\chi$ is a Bernoulli random variable with parameter $p$ and $\sgn(w,\tau)$ is as defined in Definition \ref{defn:sgn}. 
	Further calculations imply that the contribution due to all closed words $w$ such that $|\operatorname{supp}_d(w)| \neq |\operatorname{supp}_o(w)|-d$, is of the order $o(1)$. Now, observe that $\prod_{\tau \in K^{d}}\sgn(w,\tau)= \sgn(w)$ and therefore by Lemma \ref{lemma: sgn(w)=(-1)^k}, we get that 
	\begin{equation}\label{eqn:k-th moment_signed}
		\mathbb{E}\left[\int_{\mathbb{R}} x^{k} \mu_{B_n^+}(dx)\right]= \frac{(-1)^k}{N}\sum_{\substack{\text{closed words } w :\\ N_w(\tau)\neq 1, \forall \tau \in K^d \atop |\operatorname{supp}_d(w)| \neq |\operatorname{supp}_o(w)|-d}}\prod_{\tau \in K^{d}} \mathbb{E}\left[(\chi-p)^{N_{w}(\tau)}\right].
	\end{equation}
	From the calculations in (\ref{eqn:k-th moment exp2}), it follows from here that the right-hand side of (\ref{eqn:k-th moment_signed}) converge to $(-1)^k \beta_k$ and hence the proof is complete.
\end{proof}
\begin{lemma}\label{lemma: sum Var B_n signed}
	For $d \geq 2, n \geq d+1$ and $k \in \N$, let $B_n^+$ denote the centred signed adjacency matrix of $Y_d(n,p)$ and let $m_k(B_n^+)$ denote the $k-$th moment of $B_n^+$. Then for $np \rightarrow \lambda>0$,
	$\lim_{n \rightarrow \infty} \mathbb{E} \left[m_k(B_n^+)- \mathbb{E}m_k(B_n^+)\right]^2 =O(1/n^d)$ for every positive integer $k$.
\end{lemma}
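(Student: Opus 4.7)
The plan is to mirror the argument of Lemma \ref{lemma: sum Var B_n}, exploiting the fact that the signs arising from the oriented adjacency are deterministic (they depend only on the fixed orientations of the $(d-1)$-cells and $d$-cells) and have absolute value one.

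First, I would write $B^+_{\sigma,\sigma'} = \epsilon(\sigma,\sigma')\bigl(\chi_{\sigma\cup\sigma'}-p\bigr)$ for every pair $\sigma,\sigma'$ with $|\sigma\cap\sigma'|=d-1$, where $\epsilon(\sigma,\sigma')\in\{-1,+1\}$ is determined solely by whether $\sigma$ and $\sigma'$ (with the positive orientation induced by the ordering of $V$) sit in the boundary of $\sigma\cup\sigma'$ with compatible or opposite induced orientations, and $\chi_{\sigma\cup\sigma'}\sim\mathrm{Ber}(p)$ is the indicator that $\sigma\cup\sigma'\in Y$. The factorization is valid because the orientation data are fixed once and for all, while only the existence of the $d$-cell is random. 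Consequently, for any closed word $w=\sigma_1\sigma_2\cdots\sigma_{k+1}$,
\begin{equation*}
\prod_{i=1}^{k} B^+_{\sigma_i\sigma_{i+1}} \;=\; \Bigl(\prod_{i=1}^{k}\epsilon(\sigma_i,\sigma_{i+1})\Bigr)\prod_{\tau\in K^d}(\chi_\tau - p)^{N_w(\tau)},
\end{equation*}
and the leading product is a deterministic $\pm 1$ (this is the quantity $\sgn(w)$ of Definition \ref{defn:sgn}, once one checks that the $Y$-dependence in the definition of $\widehat{E}_w(\tau)$ is vacuous whenever the relevant random factor $\chi_\tau$ is non-zero). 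In particular, the signs factor out of all expectations.

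Next, I would expand
\begin{equation*}
\mathbb{E}\bigl[m_k(B_n^+)-\mathbb{E}m_k(B_n^+)\bigr]^2 \;=\; \frac{1}{N^2}\sum_{w_1,w_2}\sgn(w_1)\sgn(w_2)\,\Bigl(A(w_1,w_2)-B(w_1,w_2)\Bigr),
\end{equation*}
where $A(w_1,w_2)=\prod_\tau\mathbb{E}(\chi-p)^{N_{w_1}(\tau)+N_{w_2}(\tau)}$ and $B(w_1,w_2)=\prod_\tau\mathbb{E}(\chi-p)^{N_{w_1}(\tau)}\mathbb{E}(\chi-p)^{N_{w_2}(\tau)}$, exactly as in the unsigned case. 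The same disjointness observation applies: whenever $\operatorname{supp}_d(w_1)\cap\operatorname{supp}_d(w_2)=\emptyset$, the factorization over $\tau$ yields $A(w_1,w_2)=B(w_1,w_2)$ and the summand vanishes. Applying the triangle inequality together with $|\sgn(w_1)\sgn(w_2)|=1$, I obtain
\begin{equation*}
\mathbb{E}\bigl[m_k(B_n^+)-\mathbb{E}m_k(B_n^+)\bigr]^2 \;\le\; \frac{1}{N^2}\sum_{\substack{w_1,w_2 \\ \operatorname{supp}_d(w_1)\cap \operatorname{supp}_d(w_2)\neq\emptyset}}\bigl|A(w_1,w_2)-B(w_1,w_2)\bigr|,
\end{equation*}
which is precisely the sum already estimated in Lemma \ref{lemma: sum Var B_n}.

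Finally, I would invoke the combinatorial claim from the proof of Lemma \ref{lemma: sum Var B_n} that $|\operatorname{supp}_0(w_1,w_2)|\le|\operatorname{supp}_d(w_1,w_2)|+d$ for admissible pairs, together with the bounds $\mathbb{E}(\chi-p)^r=O(1/n)$ for $r\ge 1$ and $|[(w_1,w_2)]|=O(n^{|\operatorname{supp}_0(w_1,w_2)|})$, to conclude the right-hand side is $O(n^{-d})$. Since only finitely many equivalence classes of pairs of words of length $k+1$ contribute (the number being bounded in $k$ but not $n$), this gives the claimed $O(1/n^d)$. The main obstacle — and the only place where the argument goes beyond a cosmetic restatement of the unsigned proof — is the precise verification that $\epsilon(\sigma,\sigma')$ is indeed deterministic and that the sign product coincides with $\sgn(w)$ of Definition \ref{defn:sgn} on the event $\{\chi_\tau=1 \text{ for all } \tau\in\operatorname{supp}_d(w)\}$; once this is in place, the variance estimate follows by bounding $|\sgn|\le 1$ and reusing the unsigned counting verbatim.
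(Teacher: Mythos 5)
Your proof is correct and follows the route the paper intends: the paper actually states Lemma \ref{lemma: sum Var B_n signed} without proof, relying on the observation that the signs are deterministic $\pm 1$ factors which pull out of every expectation, so that the unsigned variance estimate of Lemma \ref{lemma: sum Var B_n} applies verbatim. Your factorization $B^+_{\sigma\sigma'}=\epsilon(\sigma,\sigma')(\chi_{\sigma\cup\sigma'}-p)$, justified by the parity description in (\ref{eqn: sigma parity}), together with the triangle inequality and $|\sgn(w_1)\sgn(w_2)|=1$, supplies exactly the detail the paper omits.
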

\begin{proof}[Proof of Theorem \ref{thm: centred LSD} (ii)]
	By a similar argument used in the proof of Theorem \ref{thm: centred LSD} $(i)$, Theorem \ref{thm:signed EESD convergence} and Lemma \ref{lemma: sum Var B_n signed} the proof follows.
\end{proof}
\noindent Now the only part left in proving Theorem \ref{thm: centred LSD} is the proof of Lemma \ref{lemma: sgn(w)=(-1)^k}.\\\\
\textit{Proof of Lemma \ref{lemma: sgn(w)=(-1)^k}}:
We first make a few observations about the signs of entries of the signed adjacency matrix $A_n^+$. Recall $K_+^{d-1}$, the set of all positively oriented $(d-1)$-cells on $n$ vertices. Let $\sigma, \sigma^\prime \in K_+^{d-1}$ be such that $\tau=\sigma \cup \sigma^\prime$ is a $d$-cell where the vertices in $\tau$ are given by $x^{(0)}< x^{(1)}<\cdots<x^{(d)} $. Note that both $\sigma$ and $\sigma^\prime$ can be obtained by removing $0$-cells from $\tau$. Suppose $\tau \setminus \sigma = \{x^{(i)}\}$ and $\tau \setminus \sigma^\prime = \{x^{(i^\prime)}\}$. Recall that the orientation induced on $\sigma$ by the positive orientation on $\tau$ is $(-1)^i[x^{(0)}, x^{(1)}, \ldots, x^{(i-1)}, x^{(i+1)}, \ldots ,x^{(d)}]$ and the orientation induced on $\sigma^\prime$ is $(-1)^{i^\prime}[x^{(0)}, x^{(1)}, \ldots ,x^{(i^\prime-1)} ,x^{(i^\prime+1)} ,\ldots, x^{(d)}]$. Since both $[x^{(0)}, x^{(1)}, \ldots, x^{(i-1)}, x^{(i+1)}, \ldots ,x^{(d)}]$ and $[x^{(0)}, x^{(1)}, \ldots ,x^{(i^\prime-1)}, x^{(i^\prime+1)}, \ldots , x^{(d)}]$ are positively oriented, it follows that the orientation induced on $\sigma$ (similarly $\sigma^\prime$) by $\tau$ is positive if and only if  $i$ (or similarly $i^\prime$) is even. Note that in Definition \ref{defn:adjacency matrix}, the matrix entries are indexed with $(d-1)$-cells with positive orientation and $\sigma \stackrel{Y}{\sim} \sigma^\prime$ if and only if $\sigma$ and $\bar{\sigma^\prime}$ are in the boundary of $\tau$ as oriented $(d-1)$-cells. Since $\sigma,\sigma^\prime$ are positively oriented, it follows that $\sigma \stackrel{Y}{\sim} \sigma^\prime$ if and only if both $i$ and $i^\prime$ have different parity. Thus we get,
\begin{equation}\label{eqn: sigma parity}
	(A_n^+)_{\sigma\sigma^\prime}=
	\begin{cases}
		1 &\text{ if } i \text{ and } i^\prime \text{ have different parity and } \sigma \cup \sigma^\prime \in Y,\\
		-1 &\text{ if } i \text{ and } i^\prime \text{ have same parity and } \sigma \cup \sigma^\prime \in Y,\\
		0 & \text{ otherwise.}
		\end{cases}
\end{equation}

 Consider a closed word $w= \sigma_1 \sigma_2 \cdots \sigma_{k+1}$ such that $|\operatorname{supp}_0(w)|=s$ and $|\operatorname{supp}_d(w)|=s-d$ for some $s$. For a $d$-cell $\tau \in \operatorname{supp}_d(w)$ and $m \geq 1$, we define the $m$-th entry time  for $\tau$ and the $m$-th exit time for $\tau $ as
\begin{align*}
	S_{m}(\tau) &=\inf \{i > T_{m-1}(\tau): \sigma_i \cup \sigma_{i+1}= \tau\},\\
T_m(\tau)&= \inf \{i > S_{m}(\tau): \sigma_{i} \cup \sigma_{i+1} \neq \tau\}.
\end{align*}
For $\tau= \sigma_{k} \cup \sigma_{k+1}$, we define the last exit point of $\tau$ as $T(\tau)=k+1$ and for all other $d$-cells $\tau$, the last exit point of $\tau$ is defined as $T(\tau)=\max\{T_m(\tau): T_m(\tau)<\infty\}$. We follow
the conventions $T_0(\tau)=0$ and $\inf \phi=\infty$.\\

\begin{figure}[h!]
	\hspace{-20mm}	\includegraphics[height=65mm, width =170mm]{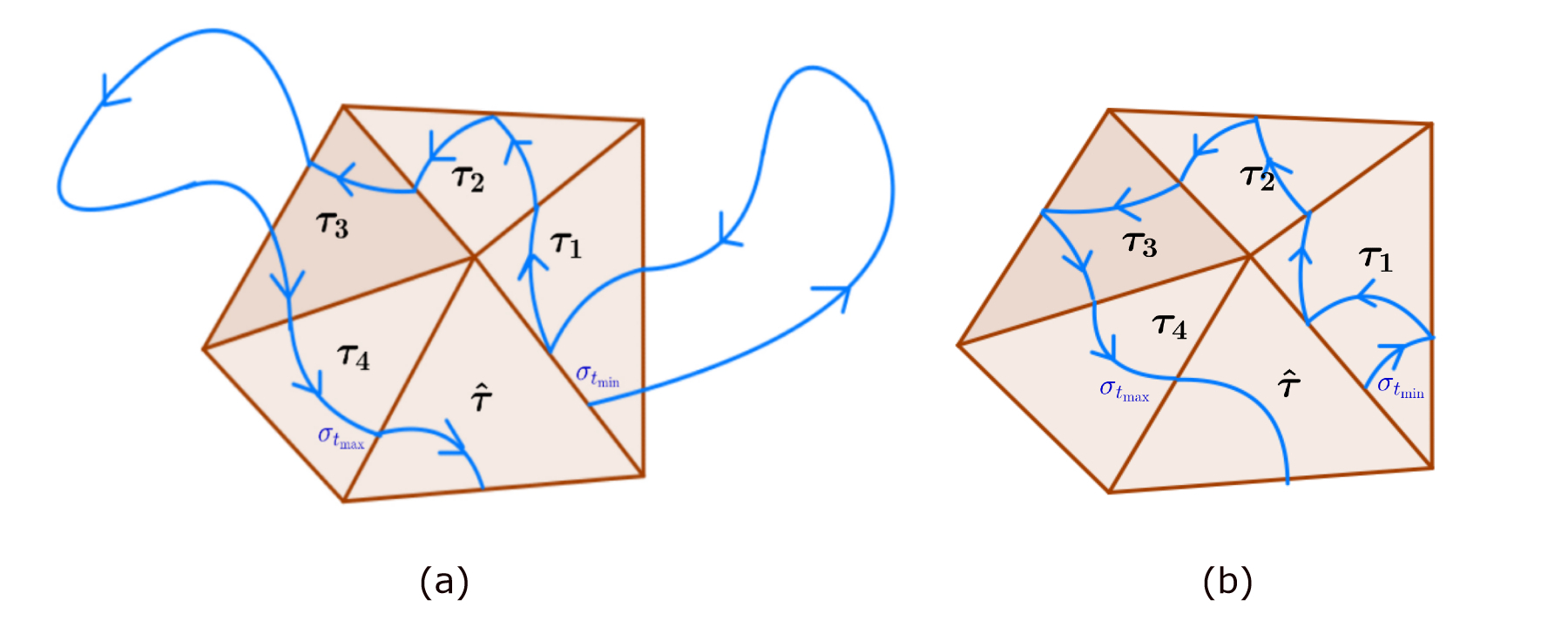}
	\caption{(a) shows a word $w^\prime$ in the simplicial complex such that $\sigma_{t_{\min}}\neq \sigma_{t_{\max}}$. (b) shows the word $w^{\prime \prime}$ obtained from $w^\prime$ after removing the bushes.}
	\label{fig:example_sign}
\end{figure} 
\noindent We make the following two claims:\\\\
\textbf{Claim 1:} For all $\tau \in \operatorname{supp}_d(w)$ and $T_m(\tau), S_{m+1}(\tau)<\infty$, $\sigma_{T_m(\tau)}=\sigma_{S_{m+1}(\tau)}.$\\\\
\textit{Proof of Claim1}: We prove the claim using proof by contradiction.
Consider the set 
\begin{equation*}
	\nabla=\{S_{m+1}(\tau): \tau \in \operatorname{supp}_d(w), m \geq 1 \text{ and } \sigma_{T_m (\tau)} \neq \sigma_{S_{m+1}(\tau)}, T_m(\tau),S_{m+1}(\tau)<\infty\}.
\end{equation*}

Suppose the claim is false, then $\nabla$ is non-empty and there exists $M >0$ and $\hat{\tau} \in \operatorname{supp}_d(w)$ such that
\begin{equation*}
	\min \nabla=S_{M+1}(\hat{\tau}).
\end{equation*}
We define 
\begin{equation}\label{tmin, tmax}
	t_{\min}= T_M(\hat{\tau}) \text{ and } t_{\max}= S_{M+1}(\hat{\tau}).
\end{equation}
Recall the notion of subword defined in Definition \ref{defn:subword}. Consider the subword $w^\prime$ of $w$ defined by $w^\prime= \sigma_{t_{\min}}\sigma_{t_{\min}+1}\cdots \sigma_{t_{\max}}$. 

 
 We define a bush as a subword, $\hat{w}$ of $w^\prime$ such that $\hat{w}=\sigma_{i_1}\sigma_{i_2}\cdots\sigma_{i_m}$, where $i_1=T_r(\tau^\prime)$ and $i_m=S_{r+1}(\tau^\prime)$ for some $r,\tau^\prime$ such that $t_{\min} < T_r(\tau^\prime) < S_{r+1}(\tau^\prime)< t_{\max}$. From the minimality of $S_{M+1}(\tau)$, it follows that each bush is a closed word. We define $w^{\prime \prime}$ as the word obtained from $w^\prime$ after the deletion of all bushes in $w^\prime$, i.e., for $t_{\min} \leq i \leq t_{\max}$, $\sigma_i$ is deleted from $w^\prime$ if there exist $r,\tau^\prime$ such that $t_{\min} <T_r(\tau^\prime) < i \leq S_{r+1}(\tau^\prime) < t_{\max}$ (see Figure \ref{fig:example_sign} (b)). Since $\sigma_{T_r(\tau^\prime)}=\sigma_{S_{r+1}(\tau^\prime)}$, it follows that $w^{\prime \prime}$ is a word.

Let $\tau_1, \tau_2, \ldots , \tau_u=\hat{\tau}$ be the elements of $\operatorname{supp}_d(w^{\prime \prime})$ in the chronological order of their 
appearance in $w^{\prime \prime}$. Note that for each $ 1 \leq i \leq u$, $\tau_i \subset \tau_{i-1} \cup \tau_{i+1}$ where $\tau_0:= \tau_u$ and $\tau_{u+1}:=\tau_1$. Therefore, we get that for each $i$,
\begin{equation}\label{eqn: tau_i union}
	\tau_i \subset \bigcup_{j\neq i} \tau_j.
\end{equation}

Let $\tau_m$ be such that $S_1(\tau_m)= \max \{S_1(\tau_1),S_1(\tau_2),\ldots , S_1(\tau_u)\}$. Consider the subword $\tilde{w}_{S_1(\tau_m)-1}= \sigma_1 \sigma_2 \cdots \sigma_{S_1(\tau_m)-1}$. Then note that all $d$-cells except $\tau_m$ of $\{\tau_1,\tau_2,\ldots ,\tau_u\}$ belongs to $\operatorname{supp}_d(\tilde{w}_{S_1(\tau_m)-1})$. Therefore from (\ref{eqn: tau_i union}), it follows that  
\begin{equation}\label{eqn:tau union 2}
	 \bigcup_{j=1}^u \tau_j \subset \operatorname{supp}_0(\tilde{w}_{S_1(\tau_m)-1}).
\end{equation}
Now since $|\operatorname{supp}_0(w)|=s$ and $|\operatorname{supp}_d(w)|=s-d$ it follows from (\ref{eqn:iff W_s^k}) that, $|\operatorname{supp}_d(\tilde{w}_{S_1(\tau_m)-1})|=|\operatorname{supp}_0(\tilde{w}_{S_1(\tau_m)-1})|-d$. Note that the $d$-cell $\tau_m$ is appearing for the first time in $\tilde{w}_{S_1(\tau_m)}$ and therefore $|\operatorname{supp}_d(\tilde{w}_{S_1(\tau_m)})|=|\operatorname{supp}_d(\tilde{w}_{S_1(\tau_m)-1})|+1$. But applying (\ref{eqn:tau union 2}), we get $|\operatorname{supp}_0(\tilde{w}_{S_1(\tau_m)})|=|\operatorname{supp}_0(\tilde{w}_{S_1(\tau_m)-1})|$ and consequently $|\operatorname{supp}_d(\tilde{w}_{S_1(\tau_m)})|=|\operatorname{supp}_0(\tilde{w}_{S_1(\tau_m)-1})|-d+1$, a contradiction to (\ref{eqn:iff W_s^k}). Hence the claim is proved. \hfill\qedsymbol \\


\noindent \textbf{Claim 2:} For $\tau \in \operatorname{supp}_d(w)$, $\sigma_{S_{1}(\tau)}=\sigma_{T(\tau)}$.\\
\begin{figure}
	\includegraphics[height=80mm, width=130mm]{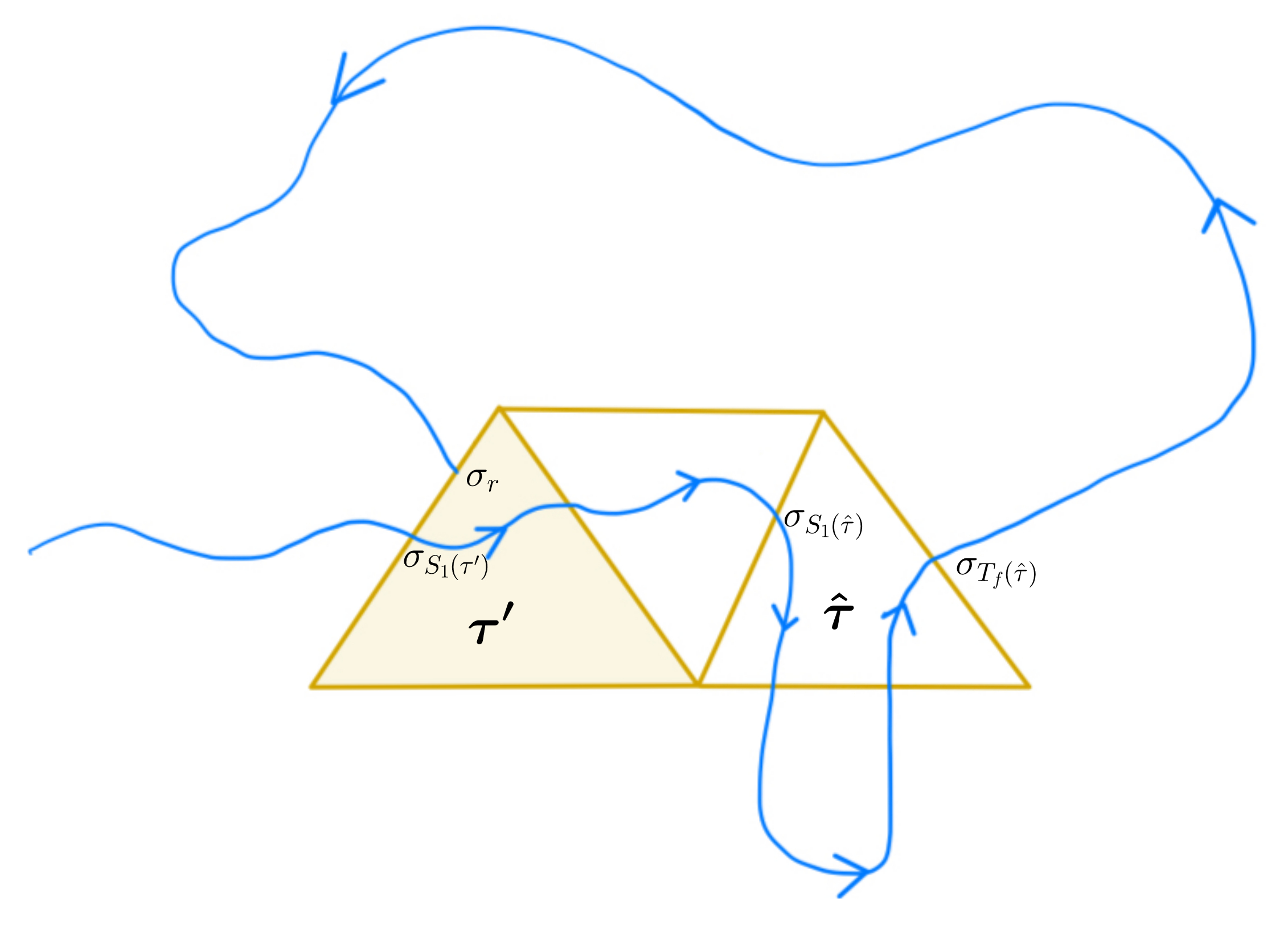}
	\caption{an illustration of a word $w$ with $d$-cell $\tau$ such that $\sigma_{S_1(\hat{\tau})} \neq \sigma_{T(\hat{\tau})}$ and $r=\min N$. }\label{fig:ex_final_exit}
\end{figure}

\noindent\textit{Proof of Claim 2}: Note that for $\tau= \sigma_1 \cup \sigma_2$, the case is trivial. For the rest of cases, the proof of Claim 2 is similar to the proof of Claim 1 and therefore, we outline only the major steps. Suppose the claim is false. Then the set
\begin{equation*}
	\nabla= \{T(\tau): T(\tau)\neq S_1(\tau), \tau \in \operatorname{supp}_d(w)\}
\end{equation*} is non-empty. Let $M,\hat{\tau}$ be such that $\max \nabla= T_M(\hat{\tau})$ and we define
for $1 \leq i \leq k+1$, consider the subword $\tilde{w}_i=\sigma_{1}\sigma_{2}\cdots \sigma_i$. Consider the set
\begin{equation*}
	N=\{i: T_{M}(\hat{\tau}) \leq i \leq k, \sigma_i \subseteq \tau^\prime \text{ for some } \tau^\prime \in \operatorname{supp}_d(\tilde{w}_{t_{S_1(\hat{\tau})}})\}
\end{equation*} 
Since $\sigma_1 = \sigma_{k+1}$, it follows that $N$
is non-empty. Let  $r$ be the smallest element in $N$ and let $\tau^\prime$ be and let $\sigma_r \subseteq \tau^\prime$ where $\tau^\prime \in \operatorname{supp}_d(\tilde{w}_{t_{S_1(\hat{\tau})}})$. Define $w^{\prime \prime}$ as the word obtained from $w^\prime= \sigma_{S_1(\hat{\tau})}\sigma_{S_1(\hat{\tau})+1}\cdots \sigma_{r}$ after the removal of bushes. Then the $d$-cells $\tau_1,\tau_2,\ldots,\tau_u$ belonging to $\operatorname{supp}_d(w^{\prime \prime})$ obeys (\ref{eqn: tau_i union}) and the rest of the argument proceeds similar to Claim 1. For a pictorial representation of this case, see Figure \ref{fig:ex_final_exit}. \qed

Now, using the above claims we calculate $\sgn(w)$. 

Consider $ \tau=\{x^{(0)},x^{(1)}, \ldots, x^{(d+1)}\} \in \operatorname{supp}_d(w)$ where $x^{(i)}<x^{(j)}$ if $i<j$. Let $\sigma_i \subset \tau $ be the $(d-1)$-cell such that $\tau \setminus \sigma=\{x^{(i)}\}$. We define two sets
\begin{align*}
	\mathcal{E}_\tau &= \{\sigma : \tau \setminus \sigma=\{x^{(i)}\} \text{ where } i \text{ is even} \},\\
	\mathcal{O}_\tau &= \{\sigma : \tau \setminus \sigma=\{x^{(i)}\} \text{ where } i \text{ is odd} \}.
\end{align*}

\noindent Furthermore, define the set
\begin{align*}
	\mathcal{R}_\tau=\{i: \sigma_i \cup \sigma_{i+1}=\tau\}.
\end{align*}
Clearly, $\mathcal{R}_\tau=\{S_1(\tau),S_1(\tau)+1, \ldots , T_1(\tau),S_2(\tau),S_2(\tau)+1,\ldots, T_2(\tau),\ldots ,T(\tau)\}$, where $T(\tau)$ is the last exit point of $\tau$.

From (\ref{eqn: sigma parity}), it follows that for $i \in \mathcal{R}_\tau$, $(A_n)_{\sigma_i \sigma_{i+1}}=-1$ if $\sigma_i, \sigma_{i+1} \in \mathcal{E}_\tau $ or $\sigma_i, \sigma_{i+1} \in \mathcal{O}_\tau $ and $(A_n)_{\sigma_i \sigma_{i+1}}=1$ if one of $\{\sigma_i, \sigma_{i+1}\}$ belongs to $ \mathcal{E}_\tau $ and the other belongs to $ \mathcal{O}_\tau $.

Suppose $\mathcal{R}_\tau=\{r_1,r_2,\ldots, r_\ell\}$ where $r_i<r_j$ if $i<j$. For $r_i \in \mathcal{R}_\tau$, $(r_i,r_{i+1})$ is called a same-parity jump if  $\sigma_{r_i}, \sigma_{r_{i+1}} \in \mathcal{E}_\tau $ or $\sigma_{r_i}, \sigma_{r_{i+1}} \in \mathcal{O}_\tau $ and $(r_i,r_{i+1})$ is called a different-parity jump if one of $\{\sigma_{r_i}, \sigma_{r_{i+1}}\}$ belongs to $ \mathcal{E}_\tau $ and the other belongs to $ \mathcal{O}_\tau $. Recall $\sgn(w,\tau)$ from Definition \ref{defn:sgn}. It follows that, 
\begin{align}\label{eqn:sign(w,tau) diff. parity}
	\sgn(w,\tau) &=(-1)^{\#\{\text{ number of same-parity jumps } (r_i,r_{i+1}) : r_{i+1}=r_i+1 \}} \nonumber \\
	&=(-1)^{\#N_w(\tau)-\#\{\text{ number of different-parity jumps } (r_i,r_{i+1})\}},
\end{align}
where the last equation follows from Claim 1, by the observation that if $r_{i+1} \neq r_i +1$, then $(r_i,r_{i+1})$ is a same-parity jump.

Without loss of generality, suppose that $\sigma_{S_1(\tau)} \in \mathcal{E}_\tau$. Then by Claim 2, we have that $\sigma_{T(\tau)} \in \mathcal{E}_\tau$. Therefore, the number of different-parity jumps is always even. Hence it follows from (\ref{eqn:sign(w,tau) diff. parity}) that,
\begin{equation*}
	\sgn(w,\tau)=(-1)^{\#N_w(\tau)} \text{ for all } \tau \in \operatorname{supp}_d(w). 
\end{equation*}
Consequently, 
\begin{align*}
	\sgn(w)= \prod_{\tau \in \operatorname{supp}_d(w)}	\sgn(w,\tau)= \prod_{\tau \in \operatorname{supp}_d(w)}(-1)^{\#N_w(\tau)}=(-1)^k,
\end{align*}
where the last equality follows from the fact that $\sum N_w(\tau)$ is the total number of edges in $w$. Hence the lemma is proved.
\qed
\subsection{Proof of Theorem \ref{thm: existence_LSD}}\label{subsec:LSD for adjacency matrix}
In this section, we show that the empirical spectral distributions of the centred and non-centred adjacency matrices of $Y_d(n,p)$ are sufficiently close to converge to the same limit. We use the following well-known rank inequality to establish the closeness.
\begin{theorem}[Theorem A.43,\cite{bai_silverstein}]\label{thm:rank_inequality}
	Let $A$ and $B$ be two $n \times n$ Hermitian matrices, and let $F_A$ and $F_B$ be the empirical spectral distributions of $A$ and $B$, respectively. Then
	$$||F_A-F_B||_\infty \leq \frac{1}{n}\operatorname{Rank}(A-B).$$
\end{theorem}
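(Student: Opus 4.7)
The plan is to prove the rank inequality by combining an elementary dimension count with the Courant--Fischer min--max characterisation of eigenvalues. Fix $x \in \mathbb{R}$ and set $r = \operatorname{Rank}(A-B)$, $k_A(x) = nF_A(x)$, and $k_B(x) = nF_B(x)$; these last two quantities are the number of eigenvalues of $A$ and $B$, respectively, that are at most $x$. Since
$$\|F_A - F_B\|_\infty = \frac{1}{n}\sup_{x \in \mathbb{R}} |k_A(x) - k_B(x)|,$$
it is enough to show that $|k_A(x) - k_B(x)| \leq r$ for every $x$.

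For the main step, I would let $V_A \subseteq \mathbb{C}^n$ be the span of all eigenvectors of $A$ whose eigenvalue is at most $x$; since $A$ is Hermitian, it admits an orthonormal eigenbasis, so $\dim V_A = k_A(x)$. Set $K = \ker(A-B)$, which has $\dim K = n - r$ by the rank--nullity theorem. The intersection dimension formula gives
$$\dim(V_A \cap K) \;\geq\; \dim V_A + \dim K - n \;=\; k_A(x) - r.$$
Next I would verify that every $v \in V_A \cap K$ satisfies the Rayleigh bound $\langle Bv, v \rangle \leq x\|v\|^2$: writing $v = \sum c_i e_i$ in the orthonormal eigenbasis of $A$ with $Ae_i = \lambda_i^A e_i$ and $\lambda_i^A \leq x$, one computes $\langle Av, v\rangle = \sum |c_i|^2 \lambda_i^A \leq x\|v\|^2$, and because $v \in K$ one has $Bv = Av$, hence $\langle Bv, v\rangle = \langle Av, v\rangle \leq x\|v\|^2$. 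Applying Courant--Fischer to $B$, the existence of a subspace of dimension $m := \dim(V_A \cap K)$ on which the Rayleigh quotient of $B$ is at most $x$ forces the $m$-th smallest eigenvalue of $B$ to be $\leq x$, so $k_B(x) \geq m \geq k_A(x) - r$. Interchanging the roles of $A$ and $B$ (noting $\operatorname{Rank}(B-A) = r$) gives $k_A(x) \geq k_B(x) - r$, hence $|k_A(x) - k_B(x)| \leq r$, and dividing by $n$ yields the theorem.

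The argument is essentially algebraic, and there is no serious obstacle to overcome. The only points demanding care are bookkeeping: the bound $k_A(x) - r$ may be negative, in which case the conclusion $k_B(x) \geq k_A(x) - r$ is vacuously satisfied since $k_B(x) \geq 0$; and one must genuinely choose an orthonormal eigenbasis of $A$ so that $\dim V_A$ equals the counting multiplicity $k_A(x)$ rather than the number of distinct eigenvalues. No probabilistic, combinatorial, or analytic input from the earlier parts of the paper is required; the inequality is a purely deterministic consequence of Hermitian perturbation theory.
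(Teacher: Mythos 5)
Your proof is correct. Note that the paper does not prove this statement at all --- it is imported verbatim as Theorem A.43 of Bai--Silverstein, so there is no internal argument to compare against; your combination of the dimension-intersection bound $\dim(V_A\cap\ker(A-B))\ge k_A(x)-r$ with the Courant--Fischer characterisation is essentially the standard proof given in that reference, and the two points you flag (counting multiplicities via an orthonormal eigenbasis, and the vacuous case $k_A(x)-r\le 0$) are exactly the right details to check.
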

In this section, we prove Theorem \ref{thm: existence_LSD} for the signed and unsigned adjacency matrices separately. First, we prove the theorem for the unsigned case.
\subsubsection{Proof of Theorem \ref{thm: existence_LSD}: Unsigned adjacency matrix}

Recall the expectation of unsigned adjacency matrix, given by $\mathbb{E} A$. 
It is easy to see that $\mathbb{E}A_n= p \mathbb{A}_n$, where $\mathbb{A}_n$ is the unsigned adjacency matrix of the $d$-dimensional complete simplicial complex  on $n$ vertices.
The proof for the unsigned adjacency matrix is similar to the proof of the signed case. Here we only state the relevant lemma. The following lemma follows easily from [Theorem 1,\cite{Johnson_graph}] as a special case.
\begin{lemma}\label{lemma:eig_unsigned complete}
	For $d \geq 2$ and $n \geq 2d$, the unsigned adjacency matrix $\mathbb{A}_n$ of the complete simplicial complex of dimension $d$ has $d+1$ distinct eigenvalues given by  
	$$\alpha_s=\sum_{r = s-1}^{\min\{s,d-1\}}(-1)^{s-r}{s \choose r}(d-r){n-d-s+r \choose 1-s+r}$$
	for $s=0,1,2,\ldots, d$ and the multiplicity of the eigenvalue $\alpha_s$ is ${n \choose s}-{n \choose s-1}$.
\end{lemma}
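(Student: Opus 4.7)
The plan is to identify $\mathbb{A}_n$ with the adjacency matrix of a well-studied graph, namely the Johnson graph $J(n,d)$, and then invoke the standard formula for its spectrum. I would first observe that the rows and columns of $\mathbb{A}_n$ are indexed by the $(d-1)$-cells of the complete $d$-dimensional complex on $[n]$, which are exactly the $d$-element subsets of $\{1,\ldots,n\}$. By Definition \ref{defn:adjacency matrix}, $(\mathbb{A}_n)_{\sigma,\sigma'}=1$ iff $\sigma\cup\sigma'$ is a $d$-cell, i.e., $|\sigma\cup\sigma'|=d+1$. Since $|\sigma|=|\sigma'|=d$, this is equivalent to $|\sigma\cap\sigma'|=d-1$, which is precisely the adjacency relation of the Johnson graph $J(n,d)$.

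With this identification in hand, I would appeal to the classical spectrum of the Johnson scheme, as stated in [Theorem 1, \cite{Johnson_graph}], to conclude that $\mathbb{A}_n$ has exactly $d+1$ distinct eigenvalues, indexed by $s=0,1,\ldots,d$, each with multiplicity $\binom{n}{s}-\binom{n}{s-1}$ (with the convention $\binom{n}{-1}:=0$). These multiplicities arise from the decomposition of the $S_n$-permutation module on $d$-subsets of $[n]$ into irreducibles indexed by two-row partitions $(n-s,s)$; the hypothesis $n\geq 2d$ ensures that all these multiplicities are nonnegative.

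The final step is to verify that the closed form $\alpha_s$ written in the lemma matches the standard Johnson eigenvalue $(d-s)(n-d-s)-s$. The binomial $\binom{n-d-s+r}{1-s+r}$ vanishes unless $1-s+r\geq 0$, so combined with the bounds $s-1\leq r\leq \min\{s,d-1\}$, only $r\in\{s-1,s\}\cap[0,d-1]$ contributes. A direct evaluation of these one or two terms then yields $\alpha_s=(d-s)(n-d)-s(d-s+1)=(d-s)(n-d-s)-s$ for $0\leq s\leq d-1$ and $\alpha_d=-d$, matching the Johnson formula exactly.

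The main obstacle, such as it is, is the careful handling of the boundary cases $s=0$ and $s=d$, where the sum collapses to a single surviving term, together with the elementary identity $(d-s)(n-d)-s(d-s+1)=(d-s)(n-d-s)-s$. Both are routine, so once $\mathbb{A}_n$ is identified with the adjacency matrix of $J(n,d)$, the lemma is an immediate consequence of the Johnson-scheme spectrum.
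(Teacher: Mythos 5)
Your proposal is correct and follows exactly the paper's route: the paper also obtains this lemma by identifying $\mathbb{A}_n$ with the adjacency matrix of the Johnson graph $J(n,d)$ and citing [Theorem 1, \cite{Johnson_graph}], stating only that the result ``follows easily as a special case.'' Your explicit verification that the displayed $\alpha_s$ collapses to the standard Johnson eigenvalue $(d-s)(n-d-s)-s$ (including the boundary cases $s=0$ and $s=d$) is a correct and welcome addition that the paper omits.
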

Now, we prove Theorem \ref{thm: existence_LSD} for the unsigned case.
\begin{proof}[Proof of Theorem \ref{thm: existence_LSD} (i)]
Let $F_{A_n+pdI}$ and $F_{B_n}$ denote the empirical spectral distribution of the matrices $A_n+pdI$ and $B_n$, respectively. First,  note that the eigenvalues of $A_n +pdI$ are the eigenvalues of $A_n$ translated by $pd$. For $np \rightarrow \lambda$, $pd \rightarrow 0$ and this implies that both $A_n$ and $A_n + pdI$ have the same limiting spectral distribution. Also,
$${A}_n + pdI = B_n + \mathbb{E}A_n + pdI= B_n + p\left(\mathbb{A}_n + dI\right).$$
From Lemma \ref{lemma:eig_unsigned complete}, it follows that $\mathbb{A}_n$ has eigenvalue $-d$ with multiplicity of the order $O(n^d)$ and the multiplicities of rest of the eigenvalues add up only to the order $O(n^{d-1})$, and therefore $\operatorname{Rank}(\mathbb{A}_n + dI)=O(n^{d-1})$. Now, from Theorem \ref{thm:rank_inequality}, it follows that 
$||F_{B_n}-F_{A_n+pdI}||_\infty=o(1),$ and this completes the proof.
\end{proof}

\begin{remark}
	In \cite{Leibzirer_Ron}, it was remarked that for $np \rightarrow \infty$, $F_{B_n}$ converge in distribution to the standard semi-circle law. This can also be seen easily from the calculations in Theorem \ref{thm:EESD_centred_unsigned}. From Lemma \ref{lemma:eig_unsigned complete}, it follows that $F_{A_n}$ also converges in distribution to the standard semi-circle law as $np \rightarrow \infty$.
\end{remark}

\subsubsection{Proof of Theorem \ref{thm: existence_LSD}: Signed adjacency matrix}

Recall the expectation of signed adjacency matrix $\mathbb{E}A^+$. It is easy to see that $\mathbb{E}A_n^+= p \mathbb{A}_n^+$, where $\mathbb{A}_n^+$ is the signed adjacency matrix of the $d$-dimensional complete simplicial complex  on $n$ vertices. In this part, we prove Theorem \ref{thm: existence_LSD} for the signed case. First, consider the following lemma:
\begin{lemma}[Lemma 8,\cite{uli_wagner}]\label{lemma: eig_A+ complete}
	For $d \geq 2$ and $n \geq d+1$, the eigenvalues of $\mathbb{A}_n^+$ are $n-d$ with multiplicity ${n-1 \choose d-1}$ and $-d$ with multiplicity ${n-1 \choose d}$.
\end{lemma}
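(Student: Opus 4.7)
The plan is to exhibit an affine relation between $\mathbb{A}_n^+$ and the combinatorial up-Laplacian $L := \partial_d \partial_d^T$ acting on $(d-1)$-chains of the complete $d$-skeleton $K_n^d$, and then to read off the spectrum of $\mathbb{A}_n^+$ from that of $L$. First I would verify the identity
\begin{equation*}
L \;=\; (n-d)\,I \;-\; \mathbb{A}_n^+
\end{equation*}
entry by entry. The diagonal $(L)_{\sigma\sigma}=\sum_{\tau\supset\sigma}[\sigma:\tau]^2$ counts the $d$-cells in $K_n^d$ containing $\sigma$, which equals $n-d$. For $\sigma\neq\sigma'$, only the $d$-cell $\tau=\sigma\cup\sigma'$ (when $|\sigma\cup\sigma'|=d+1$) contributes, giving $[\sigma:\tau][\sigma':\tau]=(-1)^{i+i'}$, where $i,i'$ are the positions in $\tau$ of the vertices removed to form $\sigma,\sigma'$. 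Comparison with the parity formula (\ref{eqn: sigma parity}) already established in the paper shows this equals $-(\mathbb{A}_n^+)_{\sigma\sigma'}$.

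Next I would pin down the spectrum of $L$ using just traces and a kernel computation. Since $\ker(\partial_d\partial_d^T)=\ker\partial_d^T$, I have $\dim\ker L=\binom{n}{d}-\operatorname{rank}\partial_d$. Because $K_n^d$ is the $d$-skeleton of the contractible simplex $\Delta^{n-1}$, the reduced homology $\tilde H_{d-1}(K_n^d)=0$; combined with the telescoping identity $\sum_{k=0}^d(-1)^k\binom{n}{d-k}=\binom{n-1}{d}$, this yields $\operatorname{rank}\partial_d=\binom{n-1}{d}$ and hence $\dim\ker L=\binom{n-1}{d-1}$. Directly from the matrix entries, $\operatorname{tr}(L)=(n-d)\binom{n}{d}$, and $\operatorname{tr}(L^2)=(n-d)^2\binom{n}{d}+d(d+1)\binom{n}{d+1}=n(n-d)\binom{n}{d}$, the second term accounting for the fact that each $d$-cell contributes $d(d+1)$ ordered pairs $(\sigma,\sigma')$ with $\sigma\cup\sigma'=\tau$.

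Let $\mu_1,\ldots,\mu_M$ denote the nonzero eigenvalues of $L$ (with multiplicity), where $M=\binom{n-1}{d}$. Then $\sum_i\mu_i=(n-d)\binom{n}{d}$ and $\sum_i\mu_i^2=n(n-d)\binom{n}{d}$; using the binomial identity $n\binom{n-1}{d}=(n-d)\binom{n}{d}$, the Cauchy--Schwarz inequality $(\sum_i\mu_i)^2\le M\sum_i\mu_i^2$ is saturated, forcing all $\mu_i$ to be equal, and therefore each equal to $n$. Hence $L$ has spectrum $\{0^{(\binom{n-1}{d-1})},\,n^{(\binom{n-1}{d})}\}$, and the identity of the first paragraph translates this into eigenvalues $n-d$ of multiplicity $\binom{n-1}{d-1}$ and $-d$ of multiplicity $\binom{n-1}{d}$ for $\mathbb{A}_n^+$, as claimed.

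The only mildly delicate step is the sign reconciliation in the first paragraph, where the incidence coefficients $[\sigma:\tau]$ must be matched against the paper's orientation convention underlying (\ref{eqn: sigma parity}); once that identity is in hand, the rest of the argument is purely binomial algebra together with one application of Cauchy--Schwarz, and it sidesteps any need to invoke representation theory of $S_n$ or the explicit Kalai eigenbasis.
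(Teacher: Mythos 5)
Your argument is correct, and it is worth noting that the paper itself offers no proof of this lemma at all: it is imported verbatim as Lemma~8 of \cite{uli_wagner}, where the standard route is to identify $\mathbb{A}_n^+$ with $(n-d)I - L$ for the up-Laplacian $L=\partial_d\partial_d^T$ of the complete complex and then quote the classical fact (going back to Kalai's simplicial matrix-tree computation, or equivalently the decomposition $C_{d-1}=\operatorname{im}\partial_d\oplus\operatorname{im}\partial_{d-1}^T$ on which $L^{\mathrm{up}}+L^{\mathrm{down}}=nI$ acts as $nI$ and $0$ respectively) that $L$ has spectrum $\{n^{(\binom{n-1}{d})},0^{(\binom{n-1}{d-1})}\}$. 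Your proposal reaches the same affine identity $L=(n-d)I-\mathbb{A}_n^+$ --- and the sign reconciliation with (\ref{eqn: sigma parity}) does check out, since $[\sigma:\tau][\sigma':\tau]=(-1)^{i+i'}$ is $+1$ exactly when $i,i'$ have the same parity, i.e.\ when $(\mathbb{A}_n^+)_{\sigma\sigma'}=-1$ --- but then replaces the harmonic decomposition by a self-contained count: kernel dimension from acyclicity of the skeleton, $\operatorname{tr}(L)$ and $\operatorname{tr}(L^2)$ from the entries, and saturation of Cauchy--Schwarz to force the nonzero eigenvalues to coincide. All the binomial arithmetic is right ($d(d+1)\binom{n}{d+1}=d(n-d)\binom{n}{d}$ and $n\binom{n-1}{d}=(n-d)\binom{n}{d}$), so the equality case does pin every nonzero eigenvalue at $n$. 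This buys a genuinely elementary, citation-free proof at the cost of two trace computations; the one small imprecision is that your rank recursion for $\partial_d$ uses the vanishing of $\tilde H_{j}(K_n^d)$ for \emph{all} $j\le d-1$ (together with the augmentation giving $\operatorname{rank}\partial_0=1$), not only $\tilde H_{d-1}$, though this is equally immediate from contractibility of the simplex.
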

\begin{proof}[Proof of Theorem \ref{thm: existence_LSD} (ii)]
Note that by applying Theorem \ref{thm:rank_inequality} to the matrices $A_n^+ +pdI$ and $B_n^+$, we get 
	\begin{equation}\label{eqn:ineqlity_rank}
		||F_{A_n^+ +pdI}-F_{B_n^+}||_\infty \leq \frac{1}{{n \choose d}} \operatorname{Rank}\left(p(\mathbb{A}_n^+ + dI)\right).
	\end{equation}
	It follows from Lemma \ref{lemma: eig_A+ complete} that $\mathbb{A}_n^+ + dI$ has eigenvalue 0 with multiplicity ${n-1 \choose d}=O(n^d)$ and thus $\operatorname{Rank}\left(p(\mathbb{A}_n^+ + dI)\right)=\operatorname{Rank}\left(\mathbb{A}_n^+ + dI\right)= O(n^{d-1})$. Thus from (\ref{eqn:ineqlity_rank}) it follows that $A_n^+ + pdI$ and $B_n^+$ have the same limiting spectral distribution. Since we already know that $A_n^+ + pdI$ and $A_n^+$ have the same limiting spectral distribution, from Theorem \ref{thm: centred LSD}, it follows that
	the limiting spectral distribution of $A_n^+$ is $\Gamma_d(\lambda)$ when $np \rightarrow \lambda$. This proves Theorem \ref{thm: existence_LSD} for the signed adjacency matrix.
\end{proof}

 \subsection{Proofs of Propositions \ref{prop:Gamma unbounded}, \ref{prop:Gamma_semicircle} and \ref{prop:frobenius norm}}
 
  In this section, we derive certain properties of the LSD of adjacency matrix by analysing of moment sequence $\{\beta_h(\lambda)\}_{h \geq 0}$, for both signed and unsigned cases. Using these, we would draw analogies with sparse Erd\H{o}s-R{\'e}nyi random graphs. We first prove Proposition \ref{prop:Gamma unbounded}
\begin{proof}[Proof of Proposition \ref{prop:Gamma unbounded}]
We prove the proposition in two parts: First we show that $\Gamma_d(\lambda)$ is not symmetric around zero and later we show that $\Gamma_d(\lambda)$ is unbounded.
\textcolor{red}{To prove that $\Gamma_d(\lambda)$ is non-symmetric, we show} that $\beta_3(\lambda)\neq 0$ for all $\lambda >0$. We have from (\ref{eqn:k-th moment exp1}) that 
\begin{equation}\label{eqn: 3rd moment nonoriented}
\mathbb{E}\left[\int_{\mathbb{R}} x^{3} \mu_{B_n}(dx)\right]=\frac{1}{N} \sum_{\sigma_{1}, \sigma_{2}, \sigma_{3} \in K^{d-1}} \mathbb{E}\left[B_{\sigma_{1} \sigma_{2}} B_{\sigma_{2} \sigma_{3}}  B_{\sigma_{3} \sigma_{1}}\right].
\end{equation}
It is easy to see that $\mathbb{E}\left[B_{\sigma_{1} \sigma_{2}} B_{\sigma_{2} \sigma_{3}}  B_{\sigma_{3} \sigma_{1}}\right]$ is non-zero only when $\sigma_1 \cup \sigma_2, \sigma_1 \cup \sigma_3$ and $\sigma_2 \cup \sigma_3$ are the same $d-$cell and each of $\sigma_1,\sigma_2$ and $\sigma_3$ are distinct. The number of choices of $d-$cells are ${n \choose d+1}$ and given a $d- $cell, the number of ways of choosing $\sigma_1,\sigma_2$ and $\sigma_3$ is $(d+1) \times d \times (d-1)$. In this case, $\mathbb{E}\left[B_{\sigma_{1} \sigma_{2}} B_{\sigma_{2} \sigma_{3}}  B_{\sigma_{3} \sigma_{1}}\right]= p(1-p)^3 + (1-p)(-p)^3$. Therefore (\ref{eqn: 3rd moment nonoriented}) becomes
\begin{align*}
\mathbb{E}\left[\int_{\mathbb{R}} x^{3} \mu_{B_n}(dx)\right]&= \frac{{n \choose d+1}}{{n \choose d}}\left(p(1-p)^3 + (1-p)(-p)^3\right) \times (d+1) d (d-1)\\
&=d(d-1)(n-d)\left(p(1-p)^3 + (1-p)(-p)^3\right).
\end{align*}
Taking the limit, we get that $\beta_3(\lambda)=d(d-1)\lambda>0$ for all $d \geq 2$ and $\lambda>0$. This implies that $\Gamma_d(\lambda)$ is non-symmetric for all $\lambda>0$.

To prove that $\Gamma_d(\lambda)$ has unbounded support, it is sufficient show that $\beta_{4k}^{1/k}$ converges to infinity as $k \rightarrow \infty$. Consider the subset $B_k \subseteq \widetilde{W}_{k+d}^{4d}$ defined as the set of all $w=\sigma_{1}\sigma_{2}\cdots \sigma_{4k+1}$ with following properties:
\begin{enumerate}[(i)]
	\item $\sigma_1=\sigma_3=\sigma_5=\cdots=\sigma_{4k+1}$,
	\item $\operatorname{supp}_d(w)=\{\sigma_{2i-1}\cup \sigma_{2i}:1\leq i \leq k \}$,
	\item for $1 \leq i \neq j \leq k$, the $d$-cells $\sigma_{2i-1}\cup \sigma_{2i}$ and $\sigma_{2j-1}\cup \sigma_{2j}$ are distinct.
\end{enumerate}
We count the number of elements in $B_k$. Note that the representative element for $\sigma_{1}$ is always $\{1,2,\ldots,d\}$ and therefore by condition (i), the number of choices for $\sigma_i$ is always equal to 1 for all odd values of $i$. Now we turn our attention to $i=2,4,\ldots,4k$. By condition (iii), for each $1 \leq j \leq k$, the $d$-cell $\tau=\sigma_{2j-1}\cup \sigma_{2j}$ is the first appearance of $\tau$. Hence the new $0$-cell added to $\tau$ is fixed, as it is in the ascending order. Therefore, the number of choices for $\sigma_{2j}$ is determined entirely by the value of $r_{2j-1}$, the first component of the labelling on the edge $\{2j-1,2j\} \in E(G_w)$. Thus, the number of choices for $\sigma_{2j}$, in this case, is $d$. For $j$ such that $k+1 \leq j \leq 2k$, note that the $d$-cell $\tau=\sigma_{2j-1}\cup \sigma_{2j}$ could be any of the $d$-cells in $\operatorname{supp}_d(w)$ as all the $d$-cells have already occurred. Thus the total number of choices for $\sigma_{2j}$ in this case is $k \times d=kd$. Hence $	|B_k|= d^k \times (kd)^k$. Therefore, it follows that
\begin{equation*}
	\beta_{4k}^{1/k} \geq \left(|\widetilde{W}_{k+d}^{4k}|\lambda^k\right)^{1/k} \geq \left(|B_k|\lambda^k\right)^{1/k}=\left(d^k(kd)^k\lambda^k\right)^{1/k}=d^2k\lambda.
\end{equation*}
As $d^2k\lambda$ converges to infinity for all choices of $\lambda > 0$, it follows that $\beta_{4k}^{1/k}$ converges to infinity as $k \rightarrow \infty$. 
\end{proof}

The following lemma from \cite{antti_ron} helps in proving Proposition \ref{prop:Gamma_semicircle}.
\begin{lemma}[Lemma 3.11, \cite{antti_ron}]\label{Lemma: W^k_k/2+d} 
The following claims hold for even $k$ and $w \in \mathcal{W}_{k / 2+d}^{k} .$
\begin{enumerate}[(i)]
\item $N_{w}(\tau) \in\{0,2\}$ for every $d$-cell $\tau$.
\item  $\left|\mathcal{W}_{k / 2+d}^{k}\right|=\mathcal{C}_{\frac{k}{2}} d^{k / 2}$,
\end{enumerate}
where $\mathcal{C}_k$ is the $k-$th Catalan number and is given by $\mathcal{C}_k= \frac{1}{k+1}{2k \choose k}$.
\end{lemma}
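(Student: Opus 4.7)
The plan is to handle the two claims in succession. For part (i), it is a clean counting argument. Every closed word $w$ of length $k+1$ has exactly $k$ consecutive pairs $(\sigma_i, \sigma_{i+1})$, so $\sum_{\tau \in \operatorname{supp}_d(w)} N_w(\tau) = k$. The condition $N_w(\tau) \neq 1$ in the definition of $\mathcal{W}_s^k$ forces $N_w(\tau) \geq 2$ for each $\tau \in \operatorname{supp}_d(w)$, which gives $|\operatorname{supp}_d(w)| \leq k/2$. On the other hand, Lemma \ref{claim: supp_0,supp_d} yields the lower bound $|\operatorname{supp}_d(w)| \geq |\operatorname{supp}_0(w)| - d = (k/2 + d) - d = k/2$. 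Combining these, $|\operatorname{supp}_d(w)| = k/2$, and $k/2$ values each at least $2$ summing to $k$ forces $N_w(\tau) = 2$ for every $\tau \in \operatorname{supp}_d(w)$, giving (i).

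For part (ii), I would set up a bijection
\[
\mathcal{W}_{k/2+d}^{k} \;\longleftrightarrow\; \{\text{Dyck paths of semilength }k/2\} \times \{1,\ldots,d\}^{k/2},
\]
which would immediately yield $|\mathcal{W}_{k/2+d}^k| = \mathcal{C}_{k/2}\, d^{k/2}$. Given $w \in \mathcal{W}_{k/2+d}^k$, part (i) implies $w \in \widetilde{\mathcal{W}}_{k/2+d}^k$, so Claims 1 and 2 from the proof of Lemma \ref{lemma: sgn(w)=(-1)^k} apply. I assign to step $i$ the letter $U$ if $\sigma_i \cup \sigma_{i+1}$ is the first appearance of its $d$-cell in $w$, and $D$ otherwise; by part (i), this produces a word in $\{U,D\}^k$ with $k/2$ letters of each type. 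At each $U$ step I also record $c(i) \in \{1,\ldots,d\}$ identifying which of the $d$ vertices of $\sigma_i$ (in the ordering induced on $\operatorname{supp}_0(\tilde{w}_i)$ by the equivalence-class convention) is removed to form $\sigma_{i+1}$; the incoming vertex is forced to be the next unused label.

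The two essential verifications are (a) that the $U/D$ string is a Dyck word and (b) that the above map is a bijection. For (a), Claim 1 forces the re-entry after any exit from a given $\tau$ to land at the $(d-1)$-cell where the previous exit occurred, and Claim 2 forces the final exit from $\tau$ to land at $\sigma_{S_1(\tau)}$; since $N_w(\tau) = 2$, the second (and only other) traversal exactly undoes the first. This return-to-entry property propagates through the walk and rules out any crossing of $U/D$ pairs, essentially by recycling the minimal-counterexample argument of Claims 1 and 2 on suitable sub-words: a crossing would produce a sub-word of $\widetilde{\mathcal{W}}_s^k$-type whose $d$-cells fail identity (\ref{eqn:iff W_s^k}). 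Thus the matching of $U$s to $D$s is non-crossing, i.e. a genuine Dyck word. For (b), given a Dyck word together with $\mathbf{c} \in \{1,\ldots,d\}^{k/2}$, reconstruct $w$ inductively: at a $U$ step introduce the next unused vertex label and remove the vertex of $\sigma_i$ indexed by $c(\cdot)$ to form $\sigma_{i+1}$; at a $D$ step set $\sigma_{i+1}$ equal to the $(d-1)$-cell at which the matched $U$ step began (uniquely determined by the Dyck structure). A routine check then confirms $w \in \mathcal{W}_{k/2+d}^k$ and that the two maps are mutual inverses.

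The main obstacle I anticipate is establishing non-crossing in (a). The prefix-balanced condition (a $D$ can only close a previously opened $\tau$) is immediate, but non-crossing requires the full DFS-like nesting, which is exactly what Claims 1 and 2 of Lemma \ref{lemma: sgn(w)=(-1)^k} provide—though adapting their minimal-crossing arguments to prove non-crossing of the $U/D$ matching will require care. Once non-crossing is in hand, the Catalan count $\mathcal{C}_{k/2}$ for Dyck words of semilength $k/2$ and the independent $d$-fold choice at each $U$ step combine cleanly to yield $|\mathcal{W}_{k/2+d}^k| = \mathcal{C}_{k/2}\, d^{k/2}$.
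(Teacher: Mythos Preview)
The paper does not supply its own proof of this lemma: it is quoted verbatim as Lemma~3.11 of \cite{antti_ron} and used as a black box in the proof of Proposition~\ref{prop:Gamma_semicircle}. So there is no in-paper argument to compare against. That said, your proposal is essentially the standard proof (and very likely the one in \cite{antti_ron}).

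Your argument for part~(i) is clean and complete: $\sum_{\tau}N_w(\tau)=k$, the constraint $N_w(\tau)\ge 2$ gives $|\operatorname{supp}_d(w)|\le k/2$, Lemma~\ref{claim: supp_0,supp_d} gives the reverse inequality, and equality forces $N_w(\tau)=2$ throughout. Note in particular that this already shows $\mathcal{W}^k_{k/2+d}=\widetilde{\mathcal{W}}^k_{k/2+d}$, which is exactly the observation the paper later uses.

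For part~(ii) your bijection is the right one, but you are slightly over-worrying one point and under-worrying another. The $U/D$ string is automatically a Dyck word once you observe it is balanced with nonnegative prefixes: a $D$ step can only occur for a $\tau$ whose $U$ step has already appeared, so every prefix has at least as many $U$'s as $D$'s. No further ``non-crossing'' condition is needed for the string to be Dyck. What \emph{does} require the nesting is your inverse map: you reconstruct a $D$ step by returning to the start of the Dyck-matched $U$ step, and for this to actually invert the forward map you need the $\tau$-pairing in $w$ to coincide with the Dyck matching. That is exactly the content of Claims~1 and~2 from the proof of Lemma~\ref{lemma: sgn(w)=(-1)^k}: with $N_w(\tau)=2$, Claim~1 gives $\sigma_{T_1(\tau)}=\sigma_{S_2(\tau)}$ and Claim~2 gives $\sigma_{S_1(\tau)}=\sigma_{T(\tau)}$, so the second traversal of each $\tau$ exactly reverses the first. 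A short minimal-counterexample argument (pick the innermost crossing and apply \eqref{eqn:iff W_s^k} to the resulting closed sub-word, just as in the proofs of those Claims) then rules out crossing $\tau$-pairs. Once that is in place, the $d^{k/2}$ factor from the removed-vertex choices at each $U$ step and the Catalan count for Dyck words give the formula. There is no circularity in invoking Claims~1 and~2 here, since Lemma~\ref{lemma: sgn(w)=(-1)^k} is proved independently of the present lemma.
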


\begin{proof}[Proof of Proposition \ref{prop:Gamma_semicircle}]
For fixed $d \geq 2$ and $\lambda>0$, $k-$th moment of $\frac{1}{\sqrt{\lambda d}}\Gamma_d(\lambda)$ is given by
\begin{align*}
\displaystyle\frac{\beta_{k}(\lambda)}{\left(\lambda d\right)^{k/2}}=\frac{1}{d^{k/2}}\sum_{s=d+1}^{\lfloor\frac{k}{2}\rfloor +d}\left|\widetilde{W}_{s}^{k}\right| \lambda^{s-d-k/2}.
\end{align*}
Due to method of moments, it is sufficient to show that for each $k \in \N$, $\frac{\beta_{k}(\lambda)}{\left(\lambda d\right)^{k/2}}$ converges to the $k$-th moment of standard semi-circle law as $\lambda \rightarrow \infty$.

Consider a fixed $k \in \mathbb{N}$. Note that from Lemma \ref{claim:size W_k,s}, $|\widetilde{\mathcal{W}}_s^k|$ is bounded above by $((k+d)d)^k$, a constant.
This gives that for all values of $s<\frac{k}{2}+d$, $|\widetilde{\mathcal{W}}_s^k|\lambda^{s-d-k/2}$ is of the order $O(\lambda^{-\frac{1}{2}})$. Therefore 
 \begin{equation*}
 	\lim_{\lambda \rightarrow \infty} \displaystyle\frac{\beta_{k}(\lambda)}{\left(\lambda d\right)^{k/2}}=
 	\begin{cases}
 		\frac{1}{d^{k/2}} |\widetilde{W}_{\frac{k}{2}+d}^{k}| &\text{ for } k \text{ even},\\
 		0 &\text{ for } k \text{ odd.}
 	\end{cases}
 \end{equation*} 

By Lemma \ref{Lemma: W^k_k/2+d} $(i)$, it follows that $|\operatorname{supp}_d(w)|= k/2$ for all even $k$ and words $w \in \mathcal{W}^k_{k/2+d}$. Thus $\widetilde{W}_{k/2+d}^k=\mathcal{W}^k_{k/2+d}$ for every even $k$. Now using $(ii)$ of Lemma \ref{Lemma: W^k_k/2+d}, we obtain that for each even $k$,
\begin{equation*}
\lim_{n \rightarrow \infty} \frac{\beta_{k}(\lambda)}{(\lambda d)^{k/2}} = \mathcal{C}_{\frac{k}{2}}.
\end{equation*}
Hence from  method of moments, it follows that $\frac{1}{\sqrt{\lambda d}}\Gamma_d(\lambda)$ converges in distribution to the standard semi-circle law as $\lambda \rightarrow \infty$.
\end{proof}
%
Now, the only part left in this subsection is to prove Proposition \ref{prop:frobenius norm}.
\begin{proof}[Proof of Proposition \ref{prop:frobenius norm}]
We first compute $\beta_2(\lambda)$ for $\lambda>0$. For any equivalence class in $\widetilde{\mathcal{W}}_{d+1}^2$, as the representative of the first $(d-1)$-cell, $\sigma_1$ is always $\{1,2,\ldots ,d\}$. Therefore, the number of choices for $\sigma_1$ is exactly 1. Note that $\sigma_2$ is obtained from the removal of a $0$-cell from $\sigma_1$ followed by the addition of the $0$-cell $\{d+1\}$. Therefore the number for choices of $\sigma_2$ is $d$. Thus for any $\lambda>0$, $\beta_2(\lambda)=d$.
	
	Note that as $B_n$ and $B_n^+$ are symmetric, thus $B_nB_n^t=B_n^2$ and $B_n^+B_n^{+^t}=(B_n^+)^2$ and therefore $\Tr(H_nH_n^t)= \Tr(H_n^+H_n^{+^t})$ and from the proof of Theorem \ref{thm:EESD_centred_unsigned} it follows that $||H_n||_F^2$ and $||H_n^+||_F^2$ converge almost surely to $\beta_2 \lambda= d\lambda$, implying the result.
\end{proof}
\section{Local Weak Convergence}\label{sec: local_weak}

The principal objective of this section is to prove Theorem \ref{thm: weak limit} and Theorem \ref{thm: spectral measure dGW}. The proofs here are more analytical in nature and would use ideas from local weak convergence. We first introduce the necessary terminology required to prove Theorems \ref{thm: weak limit} and \ref{thm: spectral measure dGW}. 
\subsection{ Random Graphs associated with $Y_d(n,p)$}

Recall the definition of unsigned adjacency matrix from Definition \ref{defn:adjacency matrix}. In this section, our aim is to introduce two graphs associated to the unsigned adjacency matrix of the Linial-Meshulam complex. As in Section \ref{sec: Existence LSD}, we shall use $A_n$ to denote the unsigned adjacency matrix of the Linial-Meshulam complex   $Y_d(n,p)$. We define $U_n= K^d(n)$ and $V_n=K^{d-1}(n)$, where $K^j(n)$ is the set of all $j$-cells on $n$ vertices.

First, we define a graph with same adjacency matrix as the unsigned adjacency matrix of the Linial-Meshulam complex $Y_d(n,p)$. This graph is called the line graph of the simplicial complex and is a subgraph of the intersection graph on the family of all $(d-1)$ cells.
\begin{definition}\label{defn: graph Gn}
For $d \geq 2, n \geq d+1$ and $0 < p < 1$,  consider the random simplicial complex $Y_d(n,p)$ with unsigned adjacency matrix $A_n$. The line graph of $Y_d(n,p)$ is defined as the graph $G_n =(V_n, E_n)$ where $V_n=K^{d-1}(n)$ and for $v, v^{\prime} \in V_n$, $\{v,v^{\prime}\} \in E_n$ if $(A_n)_{vv^{\prime}}=1$. Notice that adjacency matrix of the line graph of $Y_d(n,p)$ is $A_n$, the unsigned adjacency matrix of $Y_d(n,p)$. 
\end{definition} 
Line graphs of hypergraphs is a popular area of study in combinatorics. For an in-depth study of line graphs and their applications, see \cite{SIAM_intersection_graph}.
Theorem \ref{thm: existence_LSD} states that the limiting spectral distributions of the unsigned and signed adjacency matrices of $Y_d(n,p)$ are reflections of each other. Thus, to study the limiting spectral distribution
of adjacency matrices, it is sufficient to study the line graphs of $Y_d(n,p)$. This allows us to use the analytical techniques associated with rooted graphs to study the properties of LSD.

A bipartite graph is an undirected graph $G=(U,V,E)$ with vertex set $ U \cup V$ and edge set $E$ such that $U$ and $V$ are disjoint sets, and $\{u,v\} \in E$ only if $u \in U$ and $v \in V$. 
The concept of isomorphism on finite graphs naturally carries over to bipartite graphs. We also maintain the convention that for a bipartite graph the root $o$ always belongs to $V$. An equivalence class of rooted locally finite bipartite graphs is called an unlabelled bipartite graph. We shall use $\widehat{\mathcal{G}}^*$ to denote the space of all unlabelled rooted bipartite graphs.

The following bipartite graph was introduced in \cite{linial_peled} for studying the Laplacian of Linial-Meshulam complex.
\begin{definition}
For  $d \geq 2, n \geq d+1$ and $0 <p<1$, let $Y_d(n,p)$ denote the $d$-dimensional random simplicial complex. We define the bipartite graph $\widehat{G}_n=(U_n,V_n, \widehat{E}_n)$ where $U_n= K^{d}(n), V_n =K^{d-1}(n)$ and for $u \in U_n$ and $v \in V_n$, $\{u,v\} \in \widehat{E}_n^{(X)}$ if $u \in Y_d(n,p)$ and $v \subset u$.
\end{definition}
Throughout the rest of the paper, $\widehat{G}_n$ shall denote this particular graph, $\widehat{E}_n$ its edge set and $\widehat{A}_n$, its adjacency matrix. Notice that for $u \in U_n$ and $v \in V_n$, $\{u,v\} \in \widehat{E}_n$ if and only if there exists $v^\prime \in V_n$ such that $v \cup v^\prime=u$ and $(A_n)_{vv^\prime}=1$. The measure induced on $\widehat{\mathcal{G}}^*$ by $\widehat{G}_n$ where the root is chosen uniformly from $V_n$ will be denoted by $\nu_{d,n}$. As $\widehat{G}_n$ is a random graph, $\nu_{d,n}$ is a random measure.

The following map  $\varphi$ from the class of labelled rooted bipartite graphs to the class of labelled rooted graphs connects the two types of graphs. For a bipartite graph $G=(U,V,E,o)$ with $o \in V$, we define $\varphi(G)$ as the rooted graph with the vertex set $V$, root $o$ and for $v_1 \neq v_2$, $\{v_1,v_2\}$ belong to the edge set of $\varphi(G)$ if 
there exist $u \in U$ such that $\{v_1,u\} \in E$ and $\{v_2,u\} \in E$.
Using $\varphi$ we define a map $\hat{\varphi} : \widehat{\mathcal{G}}^* \rightarrow \mathcal{G}^*$, as
\begin{equation}\label{eqn:phi hat}
\hat{\varphi}\left[(U,V,E,o)\right]= [\varphi(U,V,E,o)],
\end{equation}
where $\left[(U,V,E,o)\right]$ is the equivalence class of $(U,V,E,o)$ in $\widehat{\mathcal{G}}^*$ and $\left[\varphi(U,V,E,o)\right]$ is the equivalence class of $\varphi(U,V,E,o)$ in $\mathcal{G}^*$. It is not hard to see that $\hat{\varphi}$ is well-defined. The following proposition says $\hat{\varphi}$ is continuous.

\begin{proposition}\label{prop: phi hat cont}
The map $\hat{\varphi}$ is continuous on $\widehat{\mathcal{G}}^*$.
\end{proposition}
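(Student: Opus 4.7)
The plan is to verify continuity directly from the metric definition. Since the local topology on $\widehat{\mathcal{G}}^*$ and $\mathcal{G}^*$ is characterised by isomorphism of rooted $r$-neighbourhoods, it suffices to produce, for each integer $t \geq 1$, an integer $s = s(t)$ such that whenever $(G_1, o_1)_s \simeq (G_2, o_2)_s$ as rooted bipartite graphs one also has $(\varphi(G_1), o_1)_t \simeq (\varphi(G_2), o_2)_t$ as rooted graphs. I will take $s = 2t+1$, and then the open-ball characterisation in Section~\ref{subsec:line grap} gives $\hat{\varphi}(B_{\varepsilon}([g])) \subseteq B_{\varepsilon'}(\hat{\varphi}([g]))$ for $\varepsilon = 1/(2t+2)$ and $\varepsilon' = 1/(t+1)$.

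The key observation is that $(\varphi(G), o)_t$ depends only on $(G, o)_{2t+1}$. A path $o = v_0, v_1, \ldots, v_k = v$ of length $k$ in $\varphi(G)$ corresponds to an alternating walk $o = v_0, u_1, v_1, u_2, \ldots, u_k, v_k$ of length $2k$ in the bipartite graph $G$, where each $u_i \in U$ witnesses the $\varphi(G)$-edge $\{v_{i-1}, v_i\}$. Conversely, because $G$ is bipartite with root $o \in V$, every walk from $o$ to another vertex in $V$ may be written in this alternating form. Hence the vertex set of $(\varphi(G), o)_t$ equals $V \cap B_G(o, 2t)$, and two such vertices $v_1, v_2$ are $\varphi(G)$-adjacent iff they share a common neighbour $u \in U$ in $G$, which must satisfy $d_G(o, u) \leq 2t+1$. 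Thus the induced subgraph $(G, o)_{2t+1}$ contains all the data needed to read off $(\varphi(G), o)_t$.

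Given this, a bipartite-graph isomorphism $\pi : (G_1, o_1)_{2t+1} \to (G_2, o_2)_{2t+1}$ must preserve the partition $U \sqcup V$, since the side of the root is canonical and the parity of $G$-distance to $o$ pins down the other side in each connected component of the ball; hence $\pi$ restricts to a bijection $V_1 \cap B_{G_1}(o_1, 2t) \to V_2 \cap B_{G_2}(o_2, 2t)$ which is root-preserving. Applying the common-neighbour criterion on both sides transports $\varphi$-adjacencies in one direction and back, producing the required rooted-graph isomorphism of the $t$-balls. The argument is entirely combinatorial and I anticipate no serious obstacle: the only moving part is to confirm that the bipartition is canonically preserved by the isomorphism (the parity-of-distance check above), together with bookkeeping for the doubling of radius induced by $\varphi$.
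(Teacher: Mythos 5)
Your proof is correct and follows essentially the same route as the paper: both arguments show that the $t$-ball of $\varphi(G)$ around the root is determined by a ball of roughly twice that radius in $G$ (so that $T_{\varphi(g),\varphi(h)}$ is at least about half of $T_{g,h}$), which yields continuity of $\hat{\varphi}$ in the local topology. Your version is slightly more careful on two points the paper glosses over --- using radius $2t+1$ rather than $2t$ so that the witnessing vertices $u \in U$ for $\varphi$-edges are captured, and checking via parity of distance to the root that a rooted isomorphism of balls preserves the bipartition --- but the underlying argument is the same.
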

\begin{proof}
Since $\widehat{\mathcal{G}}^* \subseteq \mathcal{G}^*$, (\ref{eqn: metric G*}) induces a metric on $\widehat{\mathcal{G}}^*$.

Let $g,h \in \widehat{\mathcal{G}}^* $ and suppose $(g)_t \simeq (h)_t$ for some $t \geq 0$. It follows from the well-definedness of $\hat{\varphi}$, that $\varphi\left((g)_t\right) \simeq \varphi\left((h)_t\right)$. Furthermore, note that if $v,v^\prime \in V$ are at graph distance $2r$ in $g$, then the distance between $v$ and $v^\prime$ is $r$ in $\varphi(g)$. Thus $\left(\varphi(g)\right)_{\frac{t}{2}} \simeq \left(\varphi(h)\right)_{\frac{t}{2}}$, by the definition of $\varphi$. Therefore 
\begin{equation}\label{eqn: T(G,G')}
T_{\varphi(g),\varphi(h)} \geq \frac{T_{g,h}-1}{2},
\end{equation} where $T_{g,h}$ is as defined in (\ref{eqn: metric G*}). From (\ref{eqn: T(G,G')}), it follows that 
\begin{equation*}
d_{\mathcal{G^*}}\left(\varphi(g),\varphi(h)\right) = \frac{1}{1+T_{\varphi(g),\varphi(h)}} \leq \frac{1}{1+\frac{T_{g,h}-1}{2}} = \frac{2}{1+ T_{g,h}}= 2\times d_{\mathcal{G^*}}\left(g,h\right).
\end{equation*}
Hence, the map $\hat{\varphi}$ is continuous on $\widehat{\mathcal{G}}^*$.
\end{proof}

\subsection{d-block Galton-Watson graphs}

In this subsection, we introduce a special type of random graph known as $d$-block Galton-Watson graph.  $d$-block Galton-Watson graphs are generalizations of Galton-Watson trees in the sense that the $1$-block Galton-Watson graph is a Galton-Watson tree. For $d \geq 2$, all realizations of  $d$-block Galton-Watson graph except the trivial realization are non-trees (See Figure \ref{fig: d-block graph}).

First, we introduce an explicit construction of Galton-Watson trees.
Define $\N^f := \cup_{k \geq 0} \N^k$, with the convention $\N^{0}=\phi$ and $\N=\{1,2,3,\ldots\}$. For a sequence $(N_{\mathbf{i}}), \mathbf{i} \in \N^f$ of non-negative integers, we define the set 
\begin{equation}\label{eqn: defn V}
	V = \{\mathbf{i}=(i_1,i_2,\ldots i_k) \in \N^f \text{ and } \forall \,\, 1 \leq \ell \leq k , 1 \leq i_{\ell} \leq N_{(i_1,i_2,\ldots, i_{\ell-1})} \}.
\end{equation}
For $\mathbf{i} \in V$, we call the elements of the sets $\{ (\mathbf{i},1),(\mathbf{i},2), \ldots , (\mathbf{i},N_{\mathbf{i}})\}$ as children of $\mathbf{i}$. For $k \geq 2$, $\mathbf{i}=(i_1,i_2,\ldots , i_k) \in \N^k$, we define $(i_1,i_2,\ldots , i_{k-1})$ as the ancestor of $\mathbf{i}$ and for $\mathbf{i} \in \N$, $\phi$ is defined as the ancestor of $\mathbf{i}$. We define a rooted tree $T=(V,E,o\phi)$ by putting an edge between all vertices in $V$ and its children. For the rooted tree with vertex set $V$ as in (\ref{eqn: defn V}), we define the depth of the root as $0$, and for other vertices depth is defined in an iterative fashion as, if $u$ is a child of $v$, then $\operatorname{depth}(u):= \operatorname{depth}(v)+1$.
\begin{definition}
	Let $P$ be a probability distribution on the set of non-negative integers and let $V$ be as defined in (\ref{eqn: defn V}). If $\{N_{\mathbf{i}}\}$ is an i.i.d. sequence with distribution $P$ then the random tree $T$ is called a Galton-Watson tree with offspring distribution $P$.
\end{definition}
Note that by its definition, Galton-Watson tree is a random rooted graph and therefore gives a probability measure on $\mathcal{G^*}$. Galton-Watson trees and its modifications come up as local weak limits for different random structures . Galton-Watson trees are in general not unimodular. In fact, a Galton-Watson tree is unimodular if and only if $P$ is a Poisson distribution \cite{coursSRG}. 

Next, we define the concept of $d$-siblings. Consider a random rooted tree $T=(V,E,o)$ where $V$ is as in (\ref{eqn: defn V}) and offspring distribution $\{N_{\mathbf{i}}\}$ such that $\{N_{\mathbf{i}}\}$ are i.i.d random variables with distribution $d \times P$, where $P$ is a probability distribution on non-negative integers. For the random rooted tree $T$ and $\mathbf{i} \in V$, we say that the vertices $(\mathbf{i},s), (\mathbf{i},r)$ such that $1 \leq r\neq s \leq N_{\mathbf{i}}$ are $d-$siblings if $\lfloor \frac{s-1}{d} \rfloor= \lfloor \frac{r-1}{d} \rfloor$. Now, we define $d$-block Galton-Watson graphs.

\begin{definition}
	Let $P$ be a probability distribution on non-negative integers. For $d \in \N$, a $d$-block Galton-Watson graph with offspring distribution $dP$ is defined as the graph $T=(V,E_d,o)$ such that $N_{\mathbf{i}}$ is an i.i.d. sequence with distribution $dP$, $V$ is as in (\ref{eqn: defn V}), and edges are put between all vertices of $V$ and its ancestor and $d-$siblings. The measure induced on $\mathcal{G}^*$ by $d$-block Galton-Watson graphs with offspring distribution $dP$ shall be denoted by $dGW(dP)$.
\end{definition}

To better understand $d$-block Galton-Watson graphs, we recall the notion of biconnectedness. A graph is called biconnected if it is connected, and remains connected even after the removal of any vertex and the edges incident to it. A maximal biconnected subgraph of a graph is called a biconnected component. 
\begin{definition}
 A complete subgraph of a graph is called a clique. A graph $G$ is called a block graph if every biconnected component of $G$ is a clique. A block graph with every biconnected component as a clique of size $d+1$ is called a $d$-block graph.
\end{definition}

Clearly, each realization of $d$-block Galton-Watson graph with more than  one vertex is a $d$-block graph. Since every block graph is a geodetic graph, it follows that there exists a unique shortest path between any two vertices of a $d$-block Galton-Watson graph. For a non-root vertex $v \in V$, let $(v=v_0, v_2, \ldots , v_k=o)$ be the shortest path from $v$ to the root $o$. Then we define $v_1$ as the ancestor of $v$, $v$ as the child of $v_1$, and $\operatorname{depth}(v)=k-1$. We define $\operatorname{depth}(o)=0$.

For an unlabelled rooted graph $G=(V,E,o)$, and a vertex $v \neq o$, we define the graph rooted at $v$ as $H=(V_H, E_H,v)$ as the subgraph induced on the vertex set 
$$V_H=\{u \in V: \text{ the shortest path from $u$ to $o$, passes through $v$}\},$$
with the root $v$.
For a $d$-block Galton-Watson graph with offspring distribution $dP$, the graph rooted on $v$ is again a $d$-block Galton-Watson graph with offspring distribution $dP$ for any vertex $v$. We use this fact to prove proposition \ref{prof: dGW unimodular}. 

Now, we proceed to define the concept of unimodularity of graphs. We define a graph with two roots as a triple $(G,o,o^{\prime})$ where $G$ is a graph with two distinguished vertices $o$ and $o^\prime$. For graphs with two roots, $(G_1,o_1,o_1^\prime)$ and $(G_2,o_2,o_2^\prime)$ are defined to be isomorphic if   there exists $\sigma:V_1 \rightarrow V_2$ such that $(G_1,o_1) \simeq (G_2,o_2)$ under the isomorphism $\sigma$ and $\sigma(o_1^\prime)=o_2^\prime$. We define $\mathcal{G}^{**}$ as the set of all equivalence classes of locally finite connected graphs with two roots.
\begin{definition}
	A probability measure $\rho \in \mathcal{P}\left(\mathcal{G}^* \right)$ is called unimodular if for any measurable function $f:\mathcal{G}^{**}\rightarrow \mathbb{R}_+$,
	\begin{equation}\label{defn:unimodularity}
		\displaystyle\int_{\mathcal{G}^*}\sum_{v \in V}f\left( G,o,v \right) d\rho\left(G,o\right)= \int_{\mathcal{G}^*}\sum_{v \in V}f\left( G,v,o \right) d\rho\left(G,o\right).
	\end{equation}
\end{definition} 

It is quite convenient to reduce (\ref{defn:unimodularity}) to functions $f$ such that $f(G,u,v)=0$ if $\{u,v\}\notin E(G)$.

\begin{proposition}[Proposition 2.2, \cite{Lyon_Aldous}]\label{prop: involution invar}
	Let $\rho \in \mathcal{P}(\mathcal{G}^*)$. Then $\rho$ is unimodular if and only if (\ref{defn:unimodularity}) holds for all measurable functions $f:\mathcal{G}^{**} \rightarrow \mathbb{R}_+ $ such that $f(G,u,v)=0$ if $\{u,v\}\notin E(G)$.
\end{proposition}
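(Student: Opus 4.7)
The $(\Rightarrow)$ implication is immediate: edge-supported non-negative measurable functions form a subclass of all non-negative measurable functions on $\mathcal{G}^{**}$, so universal unimodularity trivially restricts to the edge-supported class. For the $(\Leftarrow)$ implication, my plan is to reduce an arbitrary measurable $f$ to an edge-supported function by a canonical mass transport along geodesic edges, and then close the argument by induction on the support distance of $f$.

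First, by monotone convergence applied to $f_n(G,u,v) := f(G,u,v) \cdot \mathbbm{1}[\dist_G(u,v) \leq n]$ and a further decomposition into distance layers, it suffices to prove (\ref{defn:unimodularity}) for each measurable $f$ supported on $\{(u,v) : \dist_G(u,v) = k\}$, for every fixed $k \geq 0$. The case $k = 0$ is trivial (both sides collapse to $\int f(G, o, o)\, d\rho$), and $k = 1$ is exactly the hypothesis.

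For the inductive step, fix $k \geq 2$ and assume the identity holds at all distance levels strictly less than $k$. Given $f$ supported at distance $k$, define the edge-supported auxiliary function
\begin{equation*}
\tilde f(G, u, u') := \mathbbm{1}[\{u, u'\} \in E(G)] \sum_{\substack{v \,:\, \dist_G(u,v) = k, \\ \dist_G(u',v) = k - 1}} \frac{f(G, u, v)}{|\{w \sim u : \dist_G(w, v) = k - 1\}|},
\end{equation*}
which is isomorphism-invariant and, by construction, satisfies $\sum_{u'} \tilde f(G, o, u') = \sum_v f(G, o, v)$. Applying the hypothesis to $\tilde f$ gives
\begin{equation*}
\int \sum_v f(G, o, v)\, d\rho = \int \sum_u \tilde f(G, u, o)\, d\rho.
\end{equation*}
Interchanging the order of summation, the right-hand side equals $\int \sum_y G(G, o, y)\, d\rho$ where
\begin{equation*}
G(G, o, y) := \sum_{x \sim o \,:\, \dist_G(x, y) = k} \frac{f(G, x, y)}{|\{w \sim x : \dist_G(w, y) = k - 1\}|}
\end{equation*}
is supported on pairs at distance $k - 1$. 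By the inductive hypothesis, $\int \sum_y G(G, o, y)\, d\rho = \int \sum_y G(G, y, o)\, d\rho$; and a direct calculation, exploiting that the normalising denominator depends only on the ordered pair $(x, y)$ and not on the particular neighbour $w$ being summed, collapses $\sum_y G(G, y, o)$ to $\sum_x f(G, x, o) = \sum_v f(G, v, o)$, completing the induction.

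The main obstacle is designing $\tilde f$ and the reduced kernel $G$ so that the telescoping delivers precisely $\int \sum_v f(G, v, o)\, d\rho$ and not merely an approximation to it. The crucial observation making the scheme close is that the normalising factor $|\{w \sim u : \dist_G(w, v) = k - 1\}|$ depends only on the endpoints $(u, v)$, not on the intermediate geodesic neighbour summed over; this symmetry is what ensures the level-$(k-1)$ reduction matches $f$ exactly, rather than leaving a residual term that would require additional control.
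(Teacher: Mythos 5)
The paper does not actually prove this proposition: it is quoted verbatim from Aldous--Lyons and used as a black box, so there is no in-paper argument to compare against. Your proof is a correct reconstruction of the standard argument for this equivalence: decompose $f$ into distance layers, push the mass at distance $k$ one step along geodesics by splitting $f(G,u,v)$ equally among the neighbours of $u$ at distance $k-1$ from $v$, use the edge-supported identity for that one step, and let the inductive hypothesis handle the remaining $k-1$ steps. The telescoping closes exactly as you say, because the set $\{w\sim u:\dist_G(w,v)=k-1\}$ is nonempty whenever $\dist_G(u,v)=k\geq 1$ (so no division by zero) and reappears as the index set of the outer sum after the final interchange, so the ratio is identically $1$. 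Two small points to tidy up, neither of which affects validity. First, as displayed your reduced kernel omits the indicator $\mathbbm{1}[\dist_G(o,y)=k-1]$; without it the formula can be nonzero for $y$ at distance $k$ or $k+1$ from $o$, since $x\sim o$ and $\dist_G(x,y)=k$ only force $\dist_G(o,y)\in\{k-1,k,k+1\}$. The interchange of summation you perform produces the kernel \emph{with} that indicator, and that is the version to which the inductive hypothesis must be applied (you assert the correct support in words, so this is a notational slip, compounded by reusing the letter $G$ for both the graph and the kernel). Second, you should record why $\tilde f$ is a legitimate test function: it is isomorphism-invariant by construction, and Borel on $\mathcal{G}^{**}$ because local finiteness makes each inner sum a finite sum over a set determined by a bounded truncation of the rooted graph; this is routine but is the one hypothesis of the edge-supported identity you never verify.
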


It is easy to see that for any finite graph $G$, the measure $U(G)$ is unimodular. In fact, it is known that the local weak limit of a sequence of graphs is always unimodular\cite{Lyon_Aldous}. We denote the set of all unimodular measures on $\mathcal{G}^*$ by $\mathcal{P}_{\operatorname{uni}}\left(\mathcal{G}^* \right)$. It is known that
$\mathcal{P}_{\operatorname{uni}}\left(\mathcal{G}^* \right)$ is closed under local weak topology. i.e. if $(\rho_n)_{n \in \N} \subset \mathcal{P}_{\operatorname{uni}}\left(\mathcal{G}^* \right)$ and $\rho_n \rightarrow \rho$ weakly, then $\rho \in \mathcal{P}_{\operatorname{uni}}\left(\mathcal{G}^* \right)$ \cite{Benjamin_Schramm_lwc}. Thus, for Theorem \ref{thm: weak limit} to be true, it is necessary that $dGW(d\operatorname{Poi}(\lambda))$ is unimodular measure for all $\lambda>0$. The next proposition gives an independent conformation of this fact.
\begin{proposition}\label{prof: dGW unimodular}
Let $P$ be a probability measure on $\mathbb{N}$. The measure $dGW(dP)$ is unimodular if and only if $P$ is a Poisson random variable.
\end{proposition}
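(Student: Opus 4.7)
My plan is to treat the two directions separately, both starting from Proposition \ref{prop: involution invar}, which reduces the unimodularity of $dGW(dP)$ to verifying the mass transport identity (\ref{defn:unimodularity}) for $f$ supported on edges $\{u,v\}\in E(G)$. The structural facts about $T\sim dGW(dP)$ that I will exploit are: the root $o=\phi$ has degree $dN_\phi$ (only children, no parent or block-siblings); every non-root vertex $v$ has degree $d(1+N_v)$, decomposing into one parent edge, $d-1$ block-sibling edges and $dN_v$ child edges; and the offspring counts $\{N_{\mathbf{i}}\}_{\mathbf{i}\in V}$ are i.i.d.\ with common law $P$. These permit explicit computation of both sides of (\ref{defn:unimodularity}) for well-chosen test functions.

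For the ``only if'' direction, I would apply mass transport to the edge-supported function
\begin{equation*}
f_g(G,u,v) \,=\, \mathbf{1}_{\{u \sim v\}}\, g\!\bigl(\deg_G(v)\bigr),
\end{equation*}
with $g$ an arbitrary bounded function on $\mathbb{Z}_{\geq 0}$. Writing $N\sim P$ and $\lambda:=\mathbb{E}[N]$, the i.i.d.\ structure of the offspring yields
\begin{align*}
\mathrm{LHS} &= \mathbb{E}\Bigl[\,\textstyle\sum_{v \sim o} g(\deg v)\Bigr] = d\lambda\, \mathbb{E}\bigl[g(d(1+N))\bigr],\\
\mathrm{RHS} &= \mathbb{E}\bigl[dN_\phi \, g(dN_\phi)\bigr] = d\, \mathbb{E}\bigl[N\, g(dN)\bigr].
\end{align*}
Setting $\mathrm{LHS}=\mathrm{RHS}$ and specialising $g(k)=\mathbf{1}_{\{k=dj\}}$ for each $j\geq 1$ gives the recursion $\lambda\,\mathbb{P}(N=j-1)=j\,\mathbb{P}(N=j)$. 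Iterating and normalising forces $\mathbb{P}(N=j)=e^{-\lambda}\lambda^j/j!$, so $P=\operatorname{Poi}(\lambda)$ with necessarily $\lambda=\mathbb{E}[N]<\infty$.

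For the ``if'' direction, when $P=\operatorname{Poi}(\lambda)$ with $\lambda>0$, I would invoke Theorem \ref{thm: weak limit}: the measure $dGW(d\operatorname{Poi}(\lambda))$ is the local weak limit of $U(G_n)$, where $G_n$ is the (finite) line graph of $Y_d(n,\lambda/n)$. Every finite graph satisfies $U(G_n)\in\mathcal{P}_{\mathrm{uni}}(\mathcal{G}^*)$ by the elementary double-counting of directed edges, and $\mathcal{P}_{\mathrm{uni}}(\mathcal{G}^*)$ is closed under weak convergence \cite{Benjamin_Schramm_lwc}; hence the limit $dGW(d\operatorname{Poi}(\lambda))$ is unimodular. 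The degenerate case $\lambda=0$ reduces to the single-vertex measure, which is trivially unimodular.

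The main subtlety lies in the asymmetry between the root and non-root vertices in $dGW(dP)$: only non-root vertices carry the ``parent plus $d-1$ block-siblings'' overhead, so one must be careful that the degree-based test function $f_g$ actually extracts the offspring law rather than being confounded by this sibling contribution. The computation above shows that $f_g$ does so cleanly because all neighbours of the root are children (there are no sibling or parent edges to account for), and the common degree $d(1+N_v)$ at children encodes $N_v$ through the map $k\mapsto k/d-1$; no separate analysis of sibling and parent edges at deeper levels is needed.
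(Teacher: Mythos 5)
Your proof is correct. The ``only if'' half is essentially the paper's own argument: the paper tests unimodularity against $f(G,u,v)=\mathbf{1}(\{u,v\}\in E)\mathbf{1}(\operatorname{deg}(u)=kd)$ and arrives at the same identity $kP(k)=\mathbb{E}[P]\,P(k-1)$ that you obtain from $g=\mathbf{1}_{\{k=dj\}}$; your $f_g$ is just the transpose of the paper's test function, and the key structural input (all neighbours of the root are children of degree $d(1+N_v)$, while the root has degree $dN_\phi$) is identical. The ``if'' half is where you genuinely diverge. The paper verifies the mass transport identity directly: it uses the Poisson size-biasing relation $P(k-1)=P(k)/\mathbb{E}[P]$ to rewrite $\mathbb{E}\bigl[\sum_i f(G,o,i)\bigr]$ as $\mathbb{E}[N]$ times an expectation over a size-biased graph, and then performs an explicit graph surgery (the $R_{u,v}$ and $S_{u,v}$ constructions) to exhibit the exchangeability of the two roots across an edge. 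You instead invoke Theorem \ref{thm: weak limit} together with the unimodularity of $U(G_n)$ for finite graphs and the closedness of $\mathcal{P}_{\operatorname{uni}}(\mathcal{G}^*)$ under weak limits \cite{Benjamin_Schramm_lwc}. This is logically sound and not circular --- the proof of Theorem \ref{thm: weak limit} in the paper does not use the proposition --- and it is considerably shorter; what it costs you is self-containedness, since the paper explicitly frames this proposition as an \emph{independent} confirmation of a consequence of Theorem \ref{thm: weak limit}, and your route makes the ``if'' direction a corollary of that theorem (and hence of the Linial--Peled input behind it) rather than a standalone check. Two minor points to tidy: the definition of unimodularity used here requires $f\geq 0$, so either restrict to nonnegative $g$ (your indicators suffice) or split $g$ into positive and negative parts; and when $\mathbb{E}[N]=\infty$ you should say explicitly that the left side of the identity is infinite while the right side is finite, so unimodularity already fails, before concluding $\lambda<\infty$.
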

\begin{proof}
We first prove that $dGW(d\operatorname{Poi}(\lambda))$ is unimodular. Let $G=(V,E)$ be the $d$-block Galton-Watson graph with offspring distribution $dP$, where $P=\operatorname{Poi}(\lambda)$ for some $\lambda>0$. To prove unimodularity, by Proposition \ref{prop: involution invar}, it is sufficient to prove (\ref{defn:unimodularity}) for functions $f$ such that $f(G,u,v)=0$ if $\{u,v\} \notin E(G)$.

Let $N_o$ denote the number of offsprings of the root $o$. Then
\begin{align*}
\mathbb{E}\left(\sum_{i=1}^{N_o}f(G,o,i)\right)&=\sum_{k=1}^{\infty}P(k)\mathbb{E}\left(\sum_{i=1}^{kd}f(G,o,i)\big|N_o=kd\right)\\
&=\sum_{k=1}^{\infty}kdP(k)\mathbb{E}\left(f(G,o,1)\big|N_o=kd\right).
\end{align*}
Note that since $P$ is a Poisson distribution, $P(k-1)=P(k)/\mathbb{E}[P]$. Additionally, let $N$ denote a random variable independent of $\{N_\mathbf{i}\}$ and with distribution $dP$. Then since $\mathbb{E}[N]=d\mathbb{E}[P]$, we get
\begin{align}\label{eqn: E unimodular}
\mathbb{E}\left(\sum_{i=1}^{N_o}f(G,o,i)\right)&=\sum_{k=1}^{\infty}\mathbb{E}[N]P(k-1)\mathbb{E}\left(f(G,o,1)\big|N_o=kd\right)\nonumber\\
&=\mathbb{E}[N]\sum_{k=0}^{\infty}P(k)\mathbb{E}\left(f(G,o,1)\big|N_o=(k+1)d\right).
\end{align}
For $d$-block Galton-Watson graphs $G_1,G_2,\ldots, G_{kd}$ with offspring distribution $dP$, we define the random graph $G=R_{u,v}(G_1,G_2,\ldots, G_{kd})$ as the $d$-block Galton-Watson graph such that $u \in V_G$ has $kd$ neighbours numbered $v=v_1,v_2,\ldots , v_{kd}$ with the subgraph rooted at $v$ isomorphic to $G_1$ and the subgraph rooted at $v_r$ isomorphic to $G_r$. Further, an edge exists between $v_i$ and $v_j$ if and only if $\lfloor \frac{i-1}{d} \rfloor= \lfloor \frac{j-1}{d} \rfloor$ (See Figure \ref{fig: R_u,v graph}).

\begin{figure}[h]
\includegraphics[height=60mm, width =150mm]{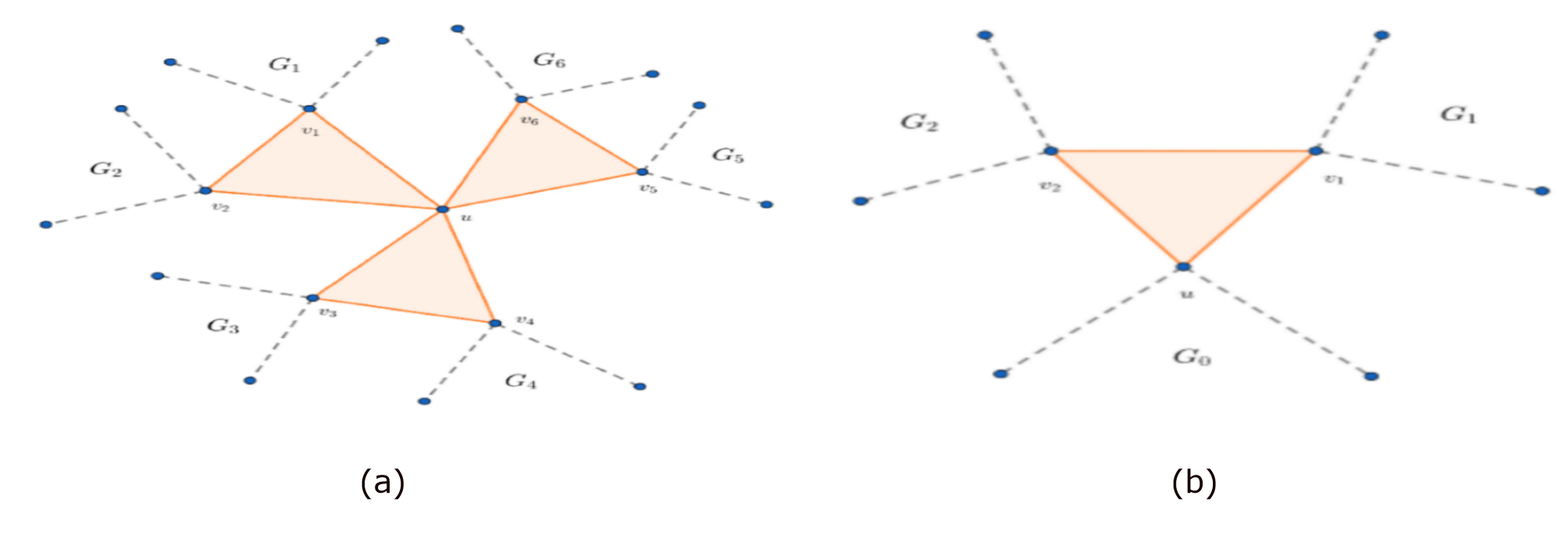}
	\caption{(a) The graph $R_{u,v_1}(G_1,G_2,\ldots, G_6)$ constructed from $2$-block Galton-Watson graphs $G_1,G_2,\ldots , G_6$. (b) The graph $S_{u,v_1}(G_0,G_1,G_2)$ constructed from $2$-block Galton-Watson graphs $G_0, G_1,G_2$.}
	\label{fig: R_u,v graph}
\end{figure}
Given $N_o=(k+1)d$, $[(G,o,1)]$ has the same law as $\left[R_{o,1}\left(G_1,G_2,\ldots, G_{(k+1)d}\right)\right]$. Thus from (\ref{eqn: E unimodular}), we get
\begin{align*}
\mathbb{E}\left(\sum_{i=1}^{N_o}f(G,o,i)\right)&=\mathbb{E}[N]\sum_{k=0}^{\infty}P(k)\mathbb{E}\left(f\left(R_{o,1}\left(G_1,G_2,\ldots, G_{(k+1)d}\right),o,1\right)\big|N_o=(k+1)d\right)\\
&=\mathbb{E}[N]\mathbb{E}\left(f\left(R_{o,1}\left(G_1,G_2,\ldots, G_{(\hat{N}+1)d}\right),o,1\right)\right),
\end{align*}
where $\hat{N}$ is independent of $N$ and has distribution $\operatorname{Poi}(\lambda)$.

Now, for $d$-block Galton-Watson graphs $G_0, G_1,G_2,\ldots, G_{d}$ with offspring distribution $dP$, we define the random graph $G=S_{u,v}(G_0, G_1, G_2,\ldots, G_{d})$ as the $d$-block Galton-Watson graph such that $u \in V_G$ has $d$ neighbours numbered $v=v_1,v_2,\ldots , v_{d}$ with all of them together forming a clique of size $d+1$. Furthermore, the subgraph rooted at $u$ is isomorphic to $G_0$ and subgraph rooted  at $v_r$ is isomorphic to $G_r$ for all $r$. 

Note that the root $o$ has $(\hat{N}+1)d$ neighbours in the graph $R_{o,1}(G_1,G_2,\ldots, G_{(\hat{N}+1)d})$. Let $1,2,\ldots , d$ denote neighbours of $o$ in one of the cliques containing $o$. The rest of the neighbours of $o$ can be treated as a $d$-block Galton-Watson graph with offspring distribution $d\operatorname{Poi}(\lambda)$ rooted at $o$. This implies that $\left(R_{o,1}(G_1,G_2,\ldots, G_{(\hat{N}+1)d}),o,1\right)$ has same law as $\left(S_{o,1}(\hat{G}_0,\hat{G}_1,\hat{G}_2,\ldots, \hat{G}_{d}),o,1\right)$, where $\hat{G}_r$'s are $d$-block Galton-Watson graphs with offspring distribution $d\operatorname{Poi}(\lambda)$. Thus 
\begin{align*}
\mathbb{E}[N]\mathbb{E}\left(f\left(R_{o,1}\left(G_1,G_2,\ldots, G_{(\hat{N}+1)d}\right),o,1\right)\right)&=\mathbb{E}[N]\mathbb{E}\left(f\left(S_{o,1}(\hat{G}_0,\hat{G}_1,\hat{G}_2,\ldots, \hat{G}_{d})\right),1, o\right)\\
&=\mathbb{E}[N]\mathbb{E}\left(f\left(S_{o,1}(\hat{G}_0,\hat{G}_1,\hat{G}_2,\ldots, \hat{G}_{d})\right),1,o\right),
\end{align*}
where the last equality follows from the observation that $\left(S_{u,v}(\hat{G}_0,\hat{G}_1,\hat{G}_2,\ldots, \hat{G}_{d}),u,v \right)$ has the same law as $\left(S_{u,v}(\hat{G}_0,\hat{G}_1,\hat{G}_2,\ldots, \hat{G}_{d}),v,u \right)$. 

A similar computation shows that 
\begin{align*}
\mathbb{E}\left(\sum_{i=1}^{N_o}f(G,i,o)\right)&=\mathbb{E}[N]\mathbb{E}\left(f\left(R_{o,1}\left(G_1,G_2,\ldots, G_{(\hat{N}+1)d}\right),o,1\right)\right)\\
&=\mathbb{E}[N]\mathbb{E}\left(f\left(S_{o,1}(\hat{G}_1,\hat{G}_2,\ldots, \hat{G}_{d})\right),1,o\right),
\end{align*}
which proves that $dGW(\operatorname{Poi}(\lambda))$ is a unimodular measure.

To show that every unimodular $d$-block Galton-Watson graph has offspring distribution $d\operatorname{Poi}(\lambda)$, consider a unimodular $d$-block Galton-Watson graph $G$ with offspring distribution $dP$. Consider $f(G,u,v)=\mathbf{1}(\operatorname{deg}(u)=kd)$. Then
\begin{align*}
\mathbb{E}\left(\sum_{v \in V}f(G,o,v)\right) &= kdP(k) \,\,\,\text{and}\\
\mathbb{E}\left(\sum_{v \in V}f(G,v,o)\right)&= \sum_{r=1}^{\infty}\mathbb{P}(N_o=dr)\mathbb{E}\left(\sum_{v \in V}\mathbf{1}(\operatorname{deg}(u)=kd)\Big|N_o=dr\right)\\
&=\sum_{r=1}^{\infty} P(r) \times dr \times P(k-1).
\end{align*}
Equating both the terms, we get the required result.
\end{proof}
\subsection{Proof of Theorem \ref{thm: weak limit}}

In this subsection, we prove Theorem \ref{thm: weak limit}. We first define a random tree known as Poisson $d$-tree which was earlier used in \cite{linial_peled}. Using the concept of push-forward measure, we connect Poisson $d$-trees to $d$-block Galton-Watson graphs and later use it to prove Theorem \ref{thm: weak limit}.

\begin{definition}\label{defn: Poisson d-tree}
For $d \geq 2$, a $d-$tree is a rooted tree in which each vertex at odd depth has exactly $d$ children. For $d \geq 2$ and $\lambda >0$, a Poisson d-tree with parameter $\lambda$, is the random d-tree in which the number of children of every vertex at even depth are independent random variables
with distribution $\operatorname{Poi}(\lambda)$. The measure induced by Poisson $d$-tree with parameter $\lambda$ on $\widehat{\mathcal{G}}^*$ will be denoted by $\nu_{d,\lambda}$.
\end{definition}

\begin{definition}
Given measurable spaces $(X_1,\tau_1)$, $(X_2,\tau_2)$ and a measurable map $F: X_1 \rightarrow X_2$, the push-forward measure of a measure $\mu$ on $(X_1,\tau_1)$ is the measure on $(X_2,\tau_2)$ defined by
\begin{equation}\label{eqn:pushforward measure defn}
\mu^F(B):=\mu\left(F^{-1}(B)\right), \,\, \forall \, B \in \tau_2.
\end{equation}
\end{definition}

Using the change of variable formula for push-forward measure (Theorem 3.6.1, \cite{Bogachev}), we have that for integrable functions $g: X_2 \rightarrow \mathbb{R}$,
\begin{equation}\label{eqn:change_of_var}
\int_{X_{2}} g \mathrm{~d}\mu^F=\int_{X_{1}} g \circ F \mathrm{~d} \mu.
\end{equation}
Recall the definition of the map $\hat{\varphi}$ in (\ref{eqn:phi hat}). Since the map $\hat{\varphi}$ is continuous, it is measurable with respect to the Borel $\sigma-$algebra on $\widehat{\mathcal{G}}^*$. 
Now, we state two lemmas required in the proof of Theorem \ref{thm: weak limit}.

\begin{lemma}\label{lemma: hat phi bijection}
For $d \geq 2$ and $\lambda >0$, the map $\hat{\varphi}:\{$set of all realizations of unlabelled Poisson $d$-trees with parameter $\lambda\} \rightarrow \{$set of all realizations of unlabelled $dGW$ graphs with offspring distribution $d\operatorname{Poi}(\lambda)\, \}$ is a bijection. As a consequence, $\nu_{d,\lambda}^{\hat{\varphi}}=dGW(d\operatorname{Poi}(\lambda))$ for all $\lambda > 0$.
\end{lemma}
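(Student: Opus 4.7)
The approach is to exhibit an explicit inverse $\psi$ to $\hat{\varphi}$ defined on $d$-block graphs, show the two maps are mutually inverse, and then match the offspring distributions to obtain the push-forward identity.

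First I would set up the inverse. Given a connected rooted $d$-block graph $G = (V,E,o)$, let $U$ be the set of maximal biconnected components of $G$ (each a $(d+1)$-clique, by definition of a $d$-block graph), and define $\psi(G)=(U,V,E',o)$ by declaring $\{u,v\} \in E'$ iff $v \in u$, viewing $u$ as its set of vertices. This construction depends only on the isomorphism class of $(G,o)$, hence descends to a map on unlabelled rooted graphs. Moreover $\psi(G)$ is a tree: any two vertices in $V$ are joined by a unique path in $G$ (block graphs are geodetic), and this lifts to a unique path in $\psi(G)$. If $G$ is a realization of $dGW(dP)$ for any offspring distribution $P$, then every vertex at odd depth in $\psi(G)$ has exactly $d$ children (the $d$ vertices of its clique other than its parent in $V$), so $\psi(G)$ is a $d$-tree.

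Next I would check $\hat{\varphi}\circ\psi = \mathrm{id}$ and $\psi\circ\hat{\varphi}=\mathrm{id}$ on the appropriate supports. For the first, two vertices $v,v' \in V$ share a common neighbor in $\psi(G)$ iff they lie in a common maximal clique of $G$ iff they are adjacent in $G$. For the second, if $T=(U,V,E,o)$ is a $d$-tree, then for every odd-depth $u \in U$ its $d+1$ neighbors (one parent plus $d$ children) form a $(d+1)$-clique in $\hat{\varphi}(T)$; the tree structure of $T$ forces distinct such cliques to intersect in at most a single vertex, so they are exactly the maximal biconnected components of $\hat{\varphi}(T)$, and $\psi$ recovers $U$ together with its incidence with $V$. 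This establishes that $\hat{\varphi}$ is a bijection between realizations of unlabelled Poisson $d$-trees and realizations of unlabelled $d$-block graphs with corresponding offspring constraints.

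For the measure-theoretic half, I would compare the laws on finite neighborhoods, which generate the Borel $\sigma$-algebra on $\mathcal{G}^*$. An even-depth vertex $v$ of a Poisson $d$-tree with $k_v \sim \mathrm{Poi}(\lambda)$ odd-depth children contributes, under $\hat{\varphi}$, exactly $dk_v$ offspring to the corresponding vertex of the image graph, packaged as $k_v$ cliques of size $d+1$ meeting at $v$. Under the bijection, the depth-$2t$ ball in $T$ corresponds to the depth-$t$ ball in $\hat{\varphi}(T)$, and the probability of any fixed combinatorial shape factors as $\prod_v e^{-\lambda}\lambda^{k_v}/k_v!$ on both sides (Poisson $d$-tree: product over even-depth vertices of depth $\leq 2t-2$; $dGW(d\mathrm{Poi}(\lambda))$: product over vertices of depth $\leq t-1$, with $N_v=dk_v$ encoding the same independent $\mathrm{Poi}(\lambda)$ random variables). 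These probabilities agree, so $\nu_{d,\lambda}^{\hat{\varphi}} = dGW(d\mathrm{Poi}(\lambda))$ on a generating family, hence everywhere.

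The main obstacle is the combinatorial verification that the map $\psi$ descends properly to unlabelled rooted graphs and that maximal cliques in $\hat{\varphi}(T)$ coincide exactly with the image of odd-depth vertices of $T$ — once these structural facts are in hand, both the bijectivity of $\hat{\varphi}$ and the matching of the finite-depth laws are routine.
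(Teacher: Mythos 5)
Your proposal is correct and follows essentially the same route as the paper: the inverse is built by grouping each vertex with its $d$ mutually adjacent children into a $(d+1)$-clique (the paper realizes your maximal biconnected components explicitly as tuples $(v,v_1,\ldots,v_d)$), and the push-forward identity then follows from the bijection intertwining the two laws. Your extra verification that the finite-depth ball probabilities factor identically on both sides is a slightly more explicit rendering of the paper's one-line observation that $T$ is a Poisson $d$-tree if and only if $\hat{\varphi}(T)$ is a $d$-block Galton--Watson graph, but it is not a genuinely different argument.
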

\begin{lemma}\label{lemma: U(Gn) convergence}
	For $d \geq 2$ and $p(n)= \lambda/n$ for some $\lambda > 0$, $U(G_n)$ converge weakly to $\nu_{d,\lambda}^{\hat{\varphi}}$ almost surely, where $\nu_{d,\lambda}^{\hat{\varphi}}$ is the pushforward measure of $\nu_{d,\lambda}$.
\end{lemma}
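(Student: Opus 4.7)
The plan is to reduce the statement about $G_n$ to an analogous statement about the bipartite graph $\hat{G}_n$, for which the Poisson $d$-tree structure is combinatorially transparent, and then apply the continuous mapping theorem. The first step is to observe that $G_n$ coincides exactly with $\varphi(\hat{G}_n)$: by the construction of $\hat{G}_n$, two vertices $v_1, v_2 \in V_n$ share a common $U$-neighbour $u$ if and only if $v_1 \cup v_2 = u \in Y_d(n,p)$, which is the defining condition for $\{v_1,v_2\} \in E_n$. Moreover the $U$-$V$-alternating paths of $\hat{G}_n$ project bijectively onto paths of $G_n$, so connected components are preserved. Consequently, for $v$ chosen uniformly from $V_n$, we have the identity of random unlabelled rooted graphs $\hat{\varphi}([(\hat{G}_n(v),v)]) = [(G_n(v),v)]$, which gives the pushforward relation $U(G_n) = U(\hat{G}_n)^{\hat{\varphi}}$ almost surely.

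Given this identity, together with the continuity of $\hat{\varphi}$ from Proposition \ref{prop: phi hat cont} and the continuous mapping theorem for weak convergence, it suffices to prove that $U(\hat{G}_n) \to \nu_{d,\lambda}$ weakly almost surely in the local topology on $\widehat{\mathcal{G}}^*$. Since the local topology is metrizable and the open balls $B_\epsilon(g)$ are determined by finite neighborhoods $(g)_t$, weak convergence reduces to showing that for every fixed $t \geq 0$ and every fixed unlabelled rooted bipartite graph $H$ of radius at most $t$,
\begin{equation*}
\frac{1}{|V_n|}\sum_{v \in V_n} \mathbf{1}\bigl\{(\hat{G}_n,v)_t \simeq H\bigr\} \;\xrightarrow[n\to\infty]{\mathrm{a.s.}}\; \nu_{d,\lambda}\bigl(\{g : (g)_t \simeq H\}\bigr).
\end{equation*}
I would prove this in two steps: first, establish convergence in expectation by a direct enumeration, and second, bound the variance by $O(n^{-d})$ (or any summable rate) so that Borel--Cantelli upgrades to almost sure convergence.

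For the expectation, fix a uniformly chosen root $v \in V_n$. Its neighbours in $\hat{G}_n$ are the $d$-cells $u \in Y_d(n,p)$ with $v \subset u$, of which there are $n-d$ potential candidates, each present independently with probability $\lambda/n$. Hence $\deg_{\hat{G}_n}(v)$ is $\mathrm{Bin}(n-d,\lambda/n)$, which converges in distribution to $\mathrm{Poi}(\lambda)$. Each such $d$-cell $u$ contains exactly $d$ other $(d-1)$-cells $v'$, and for each of those, the count of further $d$-cell neighbours is again binomial with mean tending to $\lambda$, independent across branches once the vertex sets are disjoint. Iterating this alternation $t$ times reproduces exactly the branching mechanism defining the Poisson $d$-tree (Definition \ref{defn: Poisson d-tree}): even-depth vertices have asymptotically $\mathrm{Poi}(\lambda)$ many children, odd-depth vertices have exactly $d$ children. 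The main technical obstacle is verifying that with probability $1-o(1)$ the $t$-ball $(\hat{G}_n, v)_t$ is genuinely tree-like, i.e. the underlying vertex sets in $\{1,\ldots,n\}$ explored through distinct branches are disjoint. This is an inclusion--exclusion/union-bound argument in the spirit of the standard Erd\H{o}s--R\'enyi local tree property: the number of $d$-cells within $t$ steps is $O_p(1)$, each involves $O(1)$ vertices, and the probability of a nontrivial intersection is $O(1/n)$; summed over bounded possibilities one obtains $o(1)$.

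For almost sure convergence, for a fixed $H$ let $X_n = \sum_{v \in V_n} \mathbf{1}\{(\hat{G}_n,v)_t \simeq H\}$. The key observation is that $\mathbf{1}\{(\hat{G}_n,v)_t \simeq H\}$ and $\mathbf{1}\{(\hat{G}_n,v')_t \simeq H\}$ are functions of disjoint sets of Bernoulli variables whenever the $t$-neighbourhoods of $v$ and $v'$ use disjoint $d$-cells, which happens for all but $O(n^{d-1})$ pairs $(v,v')$ out of $|V_n|^2 = \Theta(n^{2d})$. For the $O(n^{d-1})$ correlated pairs the covariance is at most $1$, giving $\mathrm{Var}(X_n/|V_n|) = O(n^{-d-1})$, hence summable. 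Borel--Cantelli then yields the required almost sure convergence. Combining with the pushforward identity and Lemma \ref{lemma: hat phi bijection} completes the proof. The main obstacle throughout is the local-tree/no-collision analysis of the exploration, which has to accommodate both the bipartite structure and the rigid $d$-fold branching at odd depths.
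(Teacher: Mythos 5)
Your overall architecture coincides with the paper's: you first prove the pushforward identity $U(G_n)=\nu_{d,n}^{\hat{\varphi}}$ (this is exactly Lemma \ref{lemma: U(G_n)=nu_d,n^phi}, proved the same way, by checking $\{v_1,v_2\}\in E_n$ iff $v_1,v_2$ share a present $d$-coface) and then transfer weak convergence through the continuous map $\hat{\varphi}$ via the change-of-variables formula. The divergence is in the remaining ingredient, the almost sure local weak convergence $\nu_{d,n}=U(\widehat{G}_n)\to\nu_{d,\lambda}$: the paper does not prove this at all but imports it from Linial--Peled as Lemma \ref{lemma: weak limit nu d,n}, whereas you attempt a self-contained first-and-second-moment proof. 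Your first-moment sketch (binomial-to-Poisson degrees at even depth, exactly $d$ new faces at odd depth, tree-likeness via a collision union bound) is the standard and correct outline.

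The gap is in your variance step. The indicator $\mathbf{1}\{(\widehat{G}_n,v)_t\simeq H\}$ is \emph{not} a function of the Bernoulli variables attached to the $d$-cells appearing in the realized $t$-neighbourhood of $v$: to certify that the neighbourhood is isomorphic to $H$ and not something larger, you must also condition on the \emph{absence} of every other $d$-cell containing an explored $(d-1)$-cell, so the relevant set of variables has (random) cardinality of order $n$, not $O(1)$. Consequently two indicators share queried variables already whenever $|v\cap v'|=d-1$ (so that $v\cup v'$ is a potential $d$-cell), and the number of such pairs is $\Theta(n^{d+1})$, not the $O(n^{d-1})$ you claim; moreover the set of pairs whose explorations interact is random, so "covariance at most $1$ for $O(n^{d-1})$ pairs'' does not yield $\operatorname{Var}(X_n/|V_n|)=O(n^{-d-1})$ as stated. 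A correct second-moment argument bounds the covariance of interacting pairs by $O(p)$ per shared potential $d$-cell, or couples the joint exploration from $(v,v')$ with two independent branching processes; either way one typically lands at $\operatorname{Var}(X_n/|V_n|)=O(1/n)$, which gives convergence in probability but is not summable, so Borel--Cantelli does not directly upgrade to almost sure convergence. To get the almost sure statement you would need an additional ingredient (a fourth-moment bound, a concentration inequality adapted to the unbounded local degrees, or passage to a subsequence plus monotonicity) --- precisely the technical content that the paper outsources to \cite{linial_peled}. Everything outside this step matches the paper's proof.
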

\begin{proof}[Proof of Theorem \ref{thm: weak limit}]
Now, we prove Theorem \ref{thm: weak limit} assuming Lemmas \ref{lemma: hat phi bijection} and \ref{lemma: U(Gn) convergence}. Consider $\lambda>0$ and $np \rightarrow \lambda$. Note that from Lemma \ref{lemma: hat phi bijection}, it follows that$\nu_{d,\lambda}^{\hat{\varphi}}=dGW(d\operatorname{Poi}(\lambda))$ and applying Lemma \ref{lemma: U(Gn) convergence}, we get that $U(G_n)$ converges weakly to $dGW(d\operatorname{Poi}(\lambda))$ almost surely.
\end{proof}
Now, we proceed to prove Lemma \ref{lemma: hat phi bijection} and Lemma \ref{lemma: U(Gn) convergence}. First we prove Lemma \ref{lemma: hat phi bijection}
\begin{proof}[Proof of Lemma \ref{lemma: hat phi bijection}]
For a tree $T$, a vertex $v$ is called a grandchild of another vertex $v^{\prime}$, if there exists $u$ such that $u$ is a child of $v^{\prime}$ and $v$ is a child of $u$. Let $T=(U,V,E,o)$ be a realization of the Poisson $d-$tree with parameter $\lambda$, where $U$ is the set of all vertices at odd depth and $V$ is the set of all vertices at even depth.

By the definition of Poisson $d-$tree, each $v \in V$ has offspring distribution $\operatorname{Poi}(\lambda)$, and each of its children has exactly $d$ offsprings. Consider $v \in V$, with children labelled as $u_1,u_2,\ldots,u_k$, and grandchildren labelled as $v_1,v_2,\ldots,v_{kd}$ where for each $0 \leq i \leq k-1$ and $1 \leq r,s \leq d$, $v_{i+r}$ and $v_{i+s}$ are the offspring of $u_i$. Therefore, it follows from the definition of $\hat{\varphi}$ that the edge $\{v_i,v_j\} \in E(\hat{\varphi}(T))$ if and only if $\lfloor \frac{i-1}{d} \rfloor= \lfloor \frac{j-1}{d} \rfloor$. Furthermore, $\{v,v_i\} \in E(\hat{\varphi}(T))$ for all $1 \leq i \leq kd$. Hence, $\hat{\varphi}(T)$ is a realization of the $d$-block Galton-Watson graph with offspring distribution $d\operatorname{Poi}(\lambda)$ and thus the map $\hat{\varphi}:\{$set of all realizations of unlabelled Poisson $d$-trees with parameter $\lambda\} \rightarrow \{$set of all realizations of unlabelled $dGW$ graphs with offspring distribution $d \operatorname{Poi}(\lambda)\, \}$ is onto.

To prove the bijection, consider a realization $G$ of the $d$-block Galton-Waton graph with vertex set $V$ and root $o$. Construct the set 
\begin{equation*}
	U=\big\{(v,v_1,\ldots,v_d): v_i \text{ is a child of } v \text{ and for all } 1 \leq i\neq j \leq d, \{v_i,v_j\} \in E(G)\big\}.
\end{equation*}
Consider the rooted graph $T$ with vertex set $U \cup V$, root $o$ and for $v \in V$ and $u \in U$, $\{u,v\} \in E(T)$ if $v$ is a component of $u$. For $v \in V, u \in U$ such that $\{v,u\} \in E(T)$, we say that $u$ is a child of $v$, if $v$ is the first component of $u$. Otherwise we say that $v$ is a child of $u$. Note that $T$ is a bipartite tree such that the set of vertices at odd depth is $U$ and the set of all vertices at even depth is $V$. From the definition of $U$, it follows that for a vertex $u \in U$, the number of children of $u$ is $d$. For vertices $v \in V$, note that the number of children of $v$ is the number of children of $v$ in $G$ divided by $d$. Thus $T$ is a realization of a Poisson $d$-tree. Hence the bijection is proved.

To prove the last part of the lemma, consider a Borel-measurable set $B \subseteq \mathcal{G^*}$. Note that
\begin{align*}
	\nu_{d,\lambda}^{\hat{\varphi}}(B)&= \nu_{d,\lambda}\{T:\hat{\varphi}(T) \in B\}\\
	&=\mathbb{P}\{T:T \text{ is a realization of Poisson } d\text{-tree such that }[\varphi(T)] \in B\},
\end{align*}
where $[\hat{\varphi}(T)]$ is the equivalence class of $\hat{\varphi}(T) $ in $\mathcal{G^*}$. Furthermore, note that $T$ is a Poisson $d$-tree if and only if  $\hat{\varphi}({T})$ is a $d$-block Galton-Watson tree. Thus it follows that
\begin{align*}
	\nu_{d,\lambda}^{\hat{\varphi}}(B)&=\mathbb{P}\{G: G \text{ is a realization of } d\text{-block Galton-Watson graph such that }[T] \in B\}\\
	&=dGW(d\operatorname{Poi} 
	(\lambda))(B).
\end{align*}
This proves that $\nu_{d,\lambda}^{\hat{\varphi}}=dGW(d\operatorname{Poi}(\lambda))$ for each $\lambda > 0$.
\end{proof}
Next, we introduce two lemmas required for the proof of Lemma \ref{lemma: U(Gn) convergence}.
\begin{lemma}\label{lemma: U(G_n)=nu_d,n^phi}
For $d \geq 2,$ and $ n \geq d+1,\ \nu_{d,n}^{\hat{\varphi}}= U(G_n)$ where $U(G_n)$ is as defined in (\ref{eqn: U(G)}).
\end{lemma}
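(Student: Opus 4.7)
The plan is to show that the map $\hat{\varphi}$ sends, for each fixed $v \in V_n$, the equivalence class of the rooted bipartite graph $(\widehat{G}_n(v), v)$ in $\widehat{\mathcal{G}}^*$ to the equivalence class of the rooted graph $(G_n(v), v)$ in $\mathcal{G}^*$. Since both $\nu_{d,n}$ and $U(G_n)$ are uniform mixtures over $v \in V_n$ of these laws, the push-forward identity $\nu_{d,n}^{\hat{\varphi}} = U(G_n)$ will follow at once.

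The first task is to unfold the definition of $\varphi$ on the bipartite connected component $(\widehat{G}_n(v), v) = (\widehat{U}(v), \widehat{V}(v), \widehat{E}, v)$, where $\widehat{V}(v) \subseteq V_n$ and $\widehat{U}(v) \subseteq U_n$ denote the sets of vertices reachable from $v$. For $v_1 \neq v_2$ in $\widehat{V}(v)$, the pair $\{v_1, v_2\}$ is an edge of $\varphi(\widehat{G}_n(v))$ iff there exists $u \in \widehat{U}(v)$ with $v_1 \cup v_2 \subseteq u$. Since $|v_1| = |v_2| = d$ and $|u| = d+1$, this forces $|v_1 \cap v_2| = d-1$ and $u = v_1 \cup v_2$. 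The membership $u \in \widehat{U}(v) \cap Y_d(n,p)$ is automatic once $v_1 \in \widehat{V}(v)$ and $v_1 \cup v_2 \in Y_d(n,p)$, because then $v_1 \cup v_2$ is adjacent to $v_1$ in $\widehat{G}_n$. Hence the edge condition reduces to $v_1 \cup v_2 \in Y_d(n,p)$, which is exactly $(A_n)_{v_1 v_2} = 1$, the defining condition for an edge in $G_n$ by Definition \ref{defn: graph Gn}.

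Next I would verify that the vertex set $\widehat{V}(v)$ coincides with the vertex set of $G_n(v)$. Given any path $v = w_0, w_1, \ldots, w_k = v'$ in $G_n$, inserting $u_i := w_i \cup w_{i+1} \in Y_d(n,p)$ between consecutive vertices yields a path $v, u_0, w_1, u_1, \ldots, u_{k-1}, v'$ in $\widehat{G}_n$. Conversely, any path in $\widehat{G}_n$ from $v$ to $v'$ alternates between $V_n$ and $U_n$; consecutive $V_n$-vertices $w_i, w_{i+1}$ share a common $U_n$-neighbor which, as above, must equal $w_i \cup w_{i+1} \in Y_d(n,p)$, producing a path in $G_n$. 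Combined with the edge identification, this gives $\varphi(\widehat{G}_n(v), v) = (G_n(v), v)$ as labelled rooted graphs, and so $\hat{\varphi}[(\widehat{G}_n(v), v)] = [(G_n(v), v)]$ in $\mathcal{G}^*$.

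The conclusion is then immediate from the definition of push-forward: for any Borel set $B \subseteq \mathcal{G}^*$,
\begin{equation*}
\nu_{d,n}^{\hat{\varphi}}(B) \;=\; \frac{1}{|V_n|}\sum_{v \in V_n} \mathbf{1}\left\{\hat{\varphi}[(\widehat{G}_n(v), v)] \in B\right\} \;=\; \frac{1}{|V_n|}\sum_{v \in V_n} \mathbf{1}\left\{[(G_n(v), v)] \in B\right\} \;=\; U(G_n)(B).
\end{equation*}
There is no real obstacle here; the lemma amounts to a careful bookkeeping check that $\varphi$ realises the combinatorial identification between the bipartite neighborhood of a $(d-1)$-cell in $\widehat{G}_n$ and the corresponding neighborhood in the line graph $G_n$, and that the rooted structure survives passage to the unlabelled quotient.
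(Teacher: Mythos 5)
Your proposal is correct and follows essentially the same route as the paper: identify the edge sets of $\varphi(\widehat{G}_n)$ and $G_n$ via the observation that a common $U_n$-neighbour of $v_1,v_2$ must be $v_1\cup v_2\in Y_d(n,p)$, deduce $\hat{\varphi}[(\widehat{G}_n(o),o)]=[(G_n(o),o)]$, and conclude by the push-forward formula. Your explicit check that the connected components (not just the edge sets) correspond is a detail the paper leaves implicit, but it is the same argument.
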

\begin{proof}
Recall that $G_n$ and $\widehat{G}_n$ denote the line graph of $Y_d(n,p)$ and the bipartite graph associated with $Y_d(n,p)$, respectively. 
Consider a realization $X$ of the random simplicial complex $Y_d(n,p)$ and let $G_n^{(X)}$ and $\hat{G}_n^{(X)}$ denote respectively, the line graph and the bipartite graphs corresponding to $X$. For $o \in V_n$, let $G_n^{(X)}(o)$ and $\widehat{G}_n^{(X)}(o)$ denote the connected component of $G_n^{(X)}$ and $\widehat{G}_n^{(X)}$ containing $o$, respectively.

For $v_1,v_2 \in V_n$, note that $(v_1,v_2) \in E_n^{(X)}$ if and only if $v_1 \cup v_2$ is a $d-$cell in $X$. Also, $v_1 \cup v_2 \in X^d$ if and only if both $(v_1,v_1 \cup v_2 )$ and $(v_2,v_1 \cup v_2)$ belong to the edge set of $\widehat{G}_n^{(X)}$. Thus $(v_1,v_2) \in E_n^{(X)}$ if and only if $(v_1,v_2) \in E\left(\varphi(\widehat{G}_n^{(X)})\right)$ where $E\left(\varphi(\widehat{G}_n^{(X)})\right)$ denotes the edge set of $\varphi(\widehat{G}_n^{(X)})$ . As a consequence, 
\begin{equation}\label{eqn:G_n(X),G_n hat (X)}
	\left[(G_n^{(X)}(o),o)\right]=\hat{\varphi}\left[(\widehat{G}_n^{(X)}(o),o)\right].
\end{equation} 
Since (\ref{eqn:G_n(X),G_n hat (X)}) holds for each point of the sample space, we get that
\begin{align}\label{eqn: phi hat equivalence}
\left[\left(G_n(o),o\right)\right]=\hat{\varphi}\left(\left[\widehat{G}_n (o),o\right]\right).
\end{align}
Now, consider $\nu_{d,n}^{\hat{\varphi}}$. By (\ref{eqn:pushforward measure defn}) and (\ref{eqn: phi hat equivalence}), we have that for every Borel measurable set $B \subseteq \mathcal{G}^*$,
\begin{align*}
\nu_{d,n}^{\hat{\varphi}}(B)&= \nu_{d,n}\left(\hat{\varphi}^{-1}(B)\right) \quad \\
&=\frac{1}{|V_n|}\#\{o \in V_n : \hat{\varphi}\left(\left[\widehat{G}_n (o),o\right]\right) \in B\}\\
&=\frac{1}{|V_n|}\#\{o \in V_n : \left[\left(G_n(o),o\right)\right] \in B\} \quad \\
&= U(G_n)(B).
\end{align*}
Since, the equality holds for all measurable sets, the two measures are equal.
\end{proof}
Note that $\nu_{d,n}$ and $U(G_n)$ are random measures. From \cite{linial_peled}, we have the following Lemma about the convergence of $\nu_{d,n}$. In \cite{linial_peled}, Linial and Peled have used a marked version of $\widehat{G}_n$. It is easy to see that the lemma also holds for unmarked version. For details see [Section 2.3, \cite{linial_peled}].
\begin{lemma}\label{lemma: weak limit nu d,n}
For $d \geq 2$ and $p(n)= \lambda/n$ for some $\lambda > 0$, $\nu_{d,n}$ converges weakly to $\nu_{d,\lambda}$ almost surely, where $\nu_{d,\lambda}$ is the probability measure on $\mathcal{G}^*$ induced by Poisson d-tree with parameter $\lambda$, defined in Definition \ref{defn: Poisson d-tree}.
\end{lemma}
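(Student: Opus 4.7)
The plan is to reduce the statement to the corresponding result for a marked version of $\widehat{G}_n$ proved by Linial and Peled \cite{linial_peled}, and then forget the marks. In their setup each $d$-cell $u = \{v_0 < v_1 < \cdots < v_d\}$ carries a canonical ordering of its $(d-1)$-subfaces and each bipartite edge is decorated by the position of that subface; they show that the associated empirical marked measure on the space of unlabelled rooted marked bipartite graphs converges almost surely, in the local weak sense, to the law of the marked Poisson $d$-tree. Since the map that forgets marks is a continuous surjection from the marked analogue of $\widehat{\mathcal{G}}^*$ onto $\widehat{\mathcal{G}}^*$, the continuous mapping theorem transfers this almost sure weak convergence to $\nu_{d,n}$ converging weakly to $\nu_{d,\lambda}$.

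A self-contained proof would follow the standard exploration-plus-variance scheme for sparse random graphs. First I would establish convergence in expectation: fix $t \geq 1$ and a rooted finite bipartite graph $g$ arising as a possible $t$-ball in a Poisson $d$-tree, and for a fixed root $o \in V_n$ perform a breadth-first exploration in $\widehat{G}_n$. At each discovered $(d-1)$-cell $v$, the adjacent $d$-cells are in bijection with the $0$-cells not contained in $v$, and each is present independently with probability $p = \lambda/n$, so the offspring count converges to $\operatorname{Poi}(\lambda)$. Each discovered $d$-cell $u$ has exactly $d$ further $(d-1)$-subfaces, and in the sparse regime these are fresh with probability $1 - O(1/n)$ per step. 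Summing over the $O_p(1)$ vertices touched in $t$ rounds, $\mathbb{P}[(\widehat{G}_n, o)_t \simeq g]$ converges to the mass that $\nu_{d,\lambda}$ assigns to $\{h : (h)_t \simeq g\}$, yielding $\mathbb{E}[\nu_{d,n}] \to \nu_{d,\lambda}$ weakly.

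To upgrade to almost sure convergence I would bound $\mathrm{Var}[\nu_{d,n}(B_\varepsilon(g))]$ as a double sum over pairs of roots $(o, o') \in V_n \times V_n$. Pairs whose $\lceil 1/\varepsilon \rceil$-neighborhoods in $\widehat{G}_n$ share no $d$-cell contribute zero by independence of the defining Bernoulli indicators, while for a fixed $o$ the number of $o'$ interacting with $o$ is $O_p(1)$; hence the count of interacting pairs is $O(|V_n|) = O(n^d)$ out of $|V_n|^2 = \Theta(n^{2d})$, giving variance of order $O(n^{-d})$. This is summable, so Chebyshev together with Borel-Cantelli, applied along a countable family of basic open sets of the separable space $\widehat{\mathcal{G}}^*$, delivers almost sure weak convergence. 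The main technical obstacle is this pair-variance bookkeeping, which is the bipartite analogue of the variance argument for Lemma \ref{lemma: sum Var B_n}; invoking the marked version in \cite{linial_peled} sidesteps it entirely and is the route I would actually take.
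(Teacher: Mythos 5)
Your primary route is exactly what the paper does: the paper offers no independent proof of this lemma, simply citing the marked-graph version from Linial and Peled and remarking that the unmarked statement follows by forgetting the marks, which is precisely your first paragraph. The additional exploration-plus-variance sketch is a reasonable outline of a self-contained argument but goes beyond what the paper records.
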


\begin{proof}[Proof of Lemma \ref{lemma: U(Gn) convergence}]
Consider a bounded continuous function $f$ on $\mathcal{G}^*$. By Lemma \ref{lemma: U(G_n)=nu_d,n^phi} and (\ref{eqn:change_of_var}) we obtain,
\begin{align*}
\int_{\mathcal{G}^*} f \mathrm{d} U(G_n)&=\int_{\mathcal{G}^*} f \mathrm{d}\nu_{d,n}^{\hat{\varphi}}\\
&=\int_{\widehat{\mathcal{G}}^*} f\circ\hat{\varphi}\ \mathrm{d} \nu_{d,n}.
\end{align*}
By Proposition \ref{prop: phi hat cont}, we have $\hat{\varphi}$ is continuous and since $f$ is bounded and continuous, $f \circ \hat{\varphi}$ is a bounded continuous function. Therefore by Lemma \ref{lemma: weak limit nu d,n} and change of variable formula,
\begin{align*}
\lim_{n \rightarrow \infty}\int_{\widehat{\mathcal{G}}^*} f\circ\hat{\varphi}\ \mathrm{d} \nu_{d,n}&= \int_{\widehat{\mathcal{G}}^*} f\circ\hat{\varphi}\ \mathrm{d}\nu_{d,\lambda}\\
&= \int_{\widehat{\mathcal{G}}^*} f \mathrm{d} \nu_{d,\lambda}^{\hat{\varphi}},
\end{align*}
almost surely.
This implies that for all bounded continuous functions $f$ on $\widehat{\mathcal{G}}^*$,
\begin{equation*}
\lim_{n \rightarrow \infty} \int_{\mathcal{G}^*} f \mathrm{d} U(G_n)=\int_{\widehat{\mathcal{G}}^*} f \mathrm{d} \nu_{d,\lambda}^{\hat{\varphi}}, 
\end{equation*}
almost surely. Thus, $U(G_n)$ converges weakly to $\nu_{d,\lambda}^{\hat{\varphi}}$ almost surely.
\end{proof}

\subsection{Proof of Theorem \ref{thm: spectral measure dGW}}

Recall the definition of unsigned adjacency matrix of a simplicial complex in Definition \ref{defn:adjacency matrix}. Here, we introduce a more general version of the adjacency operator of a graph. Recall that the space $\mathcal{G}^*$ introduced in Section \ref{sec: local_weak} is the space of locally finite connected rooted graphs. For a locally finite graph $G=(V,E)$, define the adjacency operator as the operator on $\ell^2(V)$ given by
\begin{equation*}
A \psi(u)=\sum_{v:\{u, v\} \in E} \psi(v),
\end{equation*}
where $\psi \in \ell^2(V)$ has finite support. 

Under the assumption that the degrees of the graph $G$ is bounded, $A$ admits a unique self-adjoint extension to $\ell^2(V)$.  
For a rooted graph $(G,o)$ with adjacency operator $A$, we define the spectral measure of the root as the unique probability measure on $\R$ such that for all integers $k \geq 1$,
\begin{equation*}
	\int x^k d \mu_A^o=\langle e_o, A^k e_o \rangle ,
\end{equation*}
where $e_o \in \ell^2(V) $ is given by $e_o(u)=\delta_{o}(u)$. It follows that
if two rooted graphs $(G_1,o_1)$ and $(G_2,o_2)$ are isomorphic, then the corresponding spectral measures of the roots, $\mu_{A^{{o_1}}_1}$ and $\mu_{A^{o_2}_2}$, are equal. Thus, the probability measure $\mu_A^{o}$ is well-defined for all $(G,o) \in \mathcal{G^*}$. For a probability measure $\rho \in \mathcal{P}(\mathcal{G^*})$, we define the expected spectral measure of the root as
\begin{equation*}
	\mu_\rho:=\mathbb{E}_\rho\mu_A^{e_0}.
\end{equation*}
It is also possible to extend the construction of $\mu_\rho$ for unimodular measures $\rho$ (For more details see \cite{coursSRG}).

For a finite graph $G$ with $|V|=n$, consider the probability measure $U(G)$ defined in (\ref{eqn: U(G)}). It follows that 
$$\mu_{U(G)}= \frac{1}{n} \sum_{k=1}^n \delta_{\lambda_k},$$
where $\lambda_1, \lambda_2, \ldots , \lambda_n$ are the eigenvalues of the adjacency matrix of $G$. 
For a random simplicial complex $Y_d(n,p)$, consider the line graph $G_n$ of $Y_d(n,p)$ defined in Definition \ref{defn: graph Gn}. 
the expected empirical spectral measure of adjacency matrix of $G_n$. Thus it follows that $\mu_{\mathbb{E}U(G_n)}$ is the expected empirical spectral measure of the unsigned adjacency matrix of $Y_d(n,p)$. 

We use the following proposition from \cite{BVS} to prove Theorem \ref{thm: spectral measure dGW}.
\begin{proposition}[Proposition 1.4, \cite{BVS}]\label{Prop mu_rho converg}
Let $\rho  \in \mathcal{P}(\mathcal{G^*})$ and $\rho_n \in \mathcal{P}(\mathcal{G^*})$ be unimodular. Suppose $\rho_n$ converges to $\rho$ weakly, then $\mu_{\rho_n}$ converges weakly to $\mu_\rho$.
\end{proposition}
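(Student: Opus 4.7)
The plan is to establish $\mu_{\rho_n} \Rightarrow \mu_\rho$ through pointwise convergence of Cauchy--Stieltjes transforms on the upper half-plane. For $z \in \mathbb{C}_+ := \{z \in \mathbb{C} : \operatorname{Im}(z) > 0\}$ and a rooted graph $(G,o)$ on which the adjacency operator $A$ admits a self-adjoint extension, set
\begin{equation*}
g_{G,o}(z) := \langle e_o, (A-zI)^{-1}e_o \rangle = \int_{\mathbb{R}} \frac{d\mu_A^o(x)}{x-z}.
\end{equation*}
Since $|g_{G,o}(z)| \leq 1/\operatorname{Im}(z)$ and the Cauchy--Stieltjes transform determines a probability measure uniquely, it suffices to prove that $\mathbb{E}_{\rho_n}[g_{G,o}(z)] \to \mathbb{E}_\rho[g_{G,o}(z)]$ for every $z \in \mathbb{C}_+$; weak convergence $\mu_{\rho_n} \Rightarrow \mu_\rho$ then follows from the Stieltjes inversion formula together with the uniform boundedness of the transforms on compact subsets of $\mathbb{C}_+$.

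The obstruction is that $(G,o) \mapsto g_{G,o}(z)$ is not continuous in the local topology, since edges far from the root can in principle affect the resolvent entry at $o$. I would circumvent this with a local truncation: for $k \geq 1$, let $A_k$ denote the adjacency matrix of the subgraph induced on the ball $B_k(o) := \{v : \operatorname{dist}_G(o,v) \leq k\}$, and put
\begin{equation*}
g^{(k)}_{G,o}(z) := \langle e_o, (A_k-zI)^{-1}e_o \rangle.
\end{equation*}
The quantity $g^{(k)}_{G,o}(z)$ depends only on $(G,o)_k$ and satisfies $|g^{(k)}_{G,o}(z)| \leq 1/\operatorname{Im}(z)$, so the map $(G,o) \mapsto g^{(k)}_{G,o}(z)$ is a bounded continuous function on $\mathcal{G}^*$. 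Consequently, by the assumed weak convergence $\rho_n \to \rho$ and the portmanteau theorem, $\mathbb{E}_{\rho_n}[g^{(k)}_{G,o}(z)] \to \mathbb{E}_\rho[g^{(k)}_{G,o}(z)]$ for each fixed $k$.

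The heart of the argument is the uniform truncation bound
\begin{equation*}
\sup_{\mu \in \{\rho\} \cup \{\rho_n\}_{n \geq 1}} \mathbb{E}_\mu \bigl| g_{G,o}(z) - g^{(k)}_{G,o}(z) \bigr| \xrightarrow[k \to \infty]{} 0.
\end{equation*}
Applying the resolvent identity $(A-z)^{-1} - (A_k-z)^{-1} = (A-z)^{-1}(A-A_k)(A_k-z)^{-1}$ expands the difference as a sum over boundary edges $\{u,v\}$ with $u \in B_{k-1}(o)$, $v \notin B_{k-1}(o)$ of products of resolvent entries. The unimodularity hypothesis is essential at this step: the mass transport identity (\ref{defn:unimodularity}) lets one re-anchor the boundary contributions from $o$ to a generic vertex, producing an upper estimate by the $\mu$-expectation of a bounded continuous local functional on $\mathcal{G}^*$ that tends to zero as $k \to \infty$ under $\rho$. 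A standard three-term $\epsilon/3$ decomposition
\begin{equation*}
\mathbb{E}_{\rho_n}[g_{G,o}] - \mathbb{E}_\rho[g_{G,o}] = \bigl( \mathbb{E}_{\rho_n}[g_{G,o}] - \mathbb{E}_{\rho_n}[g^{(k)}_{G,o}] \bigr) + \bigl( \mathbb{E}_{\rho_n}[g^{(k)}_{G,o}] - \mathbb{E}_\rho[g^{(k)}_{G,o}] \bigr) + \bigl( \mathbb{E}_\rho[g^{(k)}_{G,o}] - \mathbb{E}_\rho[g_{G,o}] \bigr)
\end{equation*}
then delivers pointwise Stieltjes-transform convergence and hence the claim. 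The main obstacle is executing the mass-transport re-anchoring cleanly in the presence of potentially unbounded degrees, where a careful Schur-complement or $\ell^2$-tail bookkeeping of resolvent entries is required, and, relatedly, verifying that $A$ is essentially self-adjoint $\rho$- and $\rho_n$-almost surely---a subtle but standard consequence of unimodularity.
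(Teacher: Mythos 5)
First, a point of comparison: the paper does not prove this proposition at all --- it is imported verbatim from \cite{BVS} --- so there is no internal argument to measure your attempt against; it has to stand on its own. Judged that way, your outline has a genuine gap at the step you yourself flag as the heart of the matter. The reduction to Stieltjes transforms and the observation that the truncated resolvent $g^{(k)}_{G,o}(z)$ is a bounded continuous function on $\mathcal{G}^*$ are both fine, but the uniform truncation bound $\sup_{\mu}\mathbb{E}_\mu\bigl|g_{G,o}(z)-g^{(k)}_{G,o}(z)\bigr|\to 0$ is asserted, not proved, and the proposed mechanism does not close it. The resolvent identity expresses $g-g^{(k)}$ through entries of the \emph{full} resolvent $(A-zI)^{-1}$ at boundary vertices of the $k$-ball; without a bounded-degree hypothesis there is no Combes--Thomas-type decay, so these entries are not controlled by any radius-$k$-measurable functional, and the claim that the boundary contribution is dominated by ``a bounded continuous local functional on $\mathcal{G}^*$'' is circular: such a functional would already have to encode the non-local quantity being truncated. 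Even granting a bound by $\mathbb{E}_{\rho_n}[h_k]$ with $h_k$ bounded continuous, weak convergence only gives $\mathbb{E}_{\rho_n}[h_k]\to\mathbb{E}_{\rho}[h_k]$ for each fixed $k$, not the uniformity in $n$ needed to interchange the limits $n\to\infty$ and $k\to\infty$. Finally, essential self-adjointness cannot be deferred to a closing remark: without it $g_{G,o}(z)=\langle e_o,(A-zI)^{-1}e_o\rangle$ is not even well defined (which self-adjoint extension?), and its almost sure validity for unimodular measures is precisely where the unimodularity hypothesis is consumed.

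The standard proof behind the citation avoids the uniform truncation bound entirely. One first shows, via the mass transport principle, that under a unimodular $\rho$ the adjacency operator is a.s.\ essentially self-adjoint on the finitely supported vectors. Since $\mathcal{G}^*$ is Polish, Skorokhod representation gives a coupling with $(G_n,o_n)\to(G,o)$ a.s.\ in the local topology; realizing the graphs on a common vertex set so that balls eventually coincide, one gets $A_{G_n}\psi\to A_{G}\psi$ for every finitely supported $\psi$, hence strong resolvent convergence (the finitely supported vectors forming a common core), hence $g_{G_n,o_n}(z)\to g_{G,o}(z)$ a.s., and bounded convergence together with Stieltjes inversion finishes the proof. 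To rescue your scheme you would need either a bounded-degree restriction (where Combes--Thomas supplies the uniform decay) or an explicit tightness/equicontinuity argument replacing it; as written the argument does not go through.
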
 

\begin{proof}[Proof of Theorem \ref{thm: spectral measure dGW}]
	By Theorem \ref{thm: weak limit}, we have that when $np \rightarrow \lambda$, $U(G_n)$ converges weakly to the probability measure $dGW(\operatorname{Poi}(\lambda))$ almost surely. Thus, from Proposition \ref{Prop mu_rho converg}, it follows that $\mu_{U(G_n)}$ converges weakly to $\mu_{dGW(\operatorname{Poi}(\lambda))}$, the spectral measure of the $d$-block Galton-Watson graph with offspring distribution $d \operatorname{Poi}(\lambda)$. Furthermore, since $\mu_{U(G_n)}$ is the empirical spectral distribution of $G_n$, from Theorem \ref{thm: existence_LSD} we have that $\mu_{U(G_n)}$ converges weakly to $\Gamma_d(\lambda)$ almost surely. Combining these two observations, we get that $\Gamma_d(\lambda)$ is equal to $\mu_{dGW(\operatorname{Poi}(\lambda))}$ in a set of probability one. Now, we use [Lemma 3.1, \cite{coursSRG}], which states that if $\rho$ is supported on finite graphs, then $\mu_\rho$ is purely atomic. Note that for $\lambda \leq 1/d$, the $d$-block Galton-Watson graph with offspring distribution $d\operatorname{Poi}(\lambda)$ is a finite graph with probability one. This proves Theorem \ref{thm: spectral measure dGW}.
\end{proof}
\bibliographystyle{amsplain}
\bibliography{Spectrum_of_RSC_in_Thermo_Regime}

\providecommand{\bysame}{\leavevmode\hbox to3em{\hrulefill}\thinspace}
\providecommand{\MR}{\relax\ifhmode\unskip\space\fi MR }
\providecommand{\MRhref}[2]{%
  \href{http://www.ams.org/mathscinet-getitem?mr=#1}{#2}
}
\providecommand{\href}[2]{#2}
\begin{thebibliography}{10}

\bibitem{Lyon_Aldous}
David Aldous and Russell Lyons, \emph{Processes on unimodular random networks},
  Electron. J. Probab. \textbf{12} (2007), no. 54, 1454--1508. \MR{2354165}

\bibitem{Aldous_Steele}
David Aldous and J.~Michael Steele, \emph{The objective method: probabilistic
  combinatorial optimization and local weak convergence}, Probability on
  discrete structures, Encyclopaedia Math. Sci., vol. 110, Springer, Berlin,
  2004, pp.~1--72. \MR{2023650}

\bibitem{Brain_Sci_Rep}
Miroslav Andjelković, Bosiljka Tadić, and Roderick Melnik, \emph{The topology
  of higher-order complexes associated with brain hubs in human connectomes.},
  Sci Rep \textbf{10} (2020), 17320.

\bibitem{Brain_Nature_Neuro}
Andrea Avena-Koenigsberger, Bratislav Misic, and Olaf Sporns,
  \emph{Communication dynamics in complex brain networks}, Nat Rev Neurosci
  \textbf{19} (2018), 17--33.

\bibitem{bai_silverstein}
Zhidong Bai and Jack~W. Silverstein, \emph{Spectral analysis of large
  dimensional random matrices}, second ed., Springer Series in Statistics,
  Springer, New York, 2010. \MR{2567175}

\bibitem{Bauer_golinelli}
M.~Bauer and O.~Golinelli, \emph{Random incidence matrices: moments of the
  spectral density}, J. Statist. Phys. \textbf{103} (2001), no.~1-2, 301--337.
  \MR{1828732}

\bibitem{Benjamin_Schramm_lwc}
Itai Benjamini and Oded Schramm, \emph{Recurrence of distributional limits of
  finite planar graphs}, Electron. J. Probab. \textbf{6} (2001), no. 23, 13.
  \MR{1873300}

\bibitem{Bobrowski_survey}
Omer Bobrowski and Dmitri Krioukov, \emph{Random simplicial complexes: models
  and phenomena}, Higher-order systems, Underst. Complex Syst., Springer, Cham,
  [2022] \copyright 2022, pp.~59--96. \MR{4433790}

\bibitem{Bogachev}
V.~I. Bogachev, \emph{Measure theory. {V}ol. {I}, {II}}, Springer-Verlag,
  Berlin, 2007. \MR{2267655}

\bibitem{coursSRG}
Charles Bordenave, \emph{Spectral measures of random graphs}, Advanced Topics
  in Random Matrices (Florent Benaych-Georges, Djalil Chafai, Sandrine
  P\'{e}ch\'{e}, and B\'{e}atrice de~Tili\`{e}re, eds.), Panoramas et
  synth\`{e}ses, vol.~53, Société Mathématiques de France, 2018,
  pp.~xii+190.

\bibitem{Bordenave_Lelarge_Salez}
Charles Bordenave, Marc Lelarge, and Justin Salez, \emph{The rank of diluted
  random graphs}, Ann. Probab. \textbf{39} (2011), no.~3, 1097--1121.
  \MR{2789584}

\bibitem{BVS}
Charles Bordenave, Arnab Sen, and B\'{a}lint Vir\'{a}g, \emph{Mean quantum
  percolation}, J. Eur. Math. Soc. (JEMS) \textbf{19} (2017), no.~12,
  3679--3707. \MR{3730511}

\bibitem{Coste_Salez}
Simon Coste and Justin Salez, \emph{Emergence of extended states at zero in the
  spectrum of sparse random graphs}, Ann. Probab. \textbf{49} (2021), no.~4,
  2012--2030. \MR{4260473}

\bibitem{Courtney_Ginestra}
Owen Courtney and Ginestra Bianconi, \emph{Generalized network structures: The
  configuration model and the canonical ensemble of simplicial complexes},
  Physical Review E \textbf{93} (2016).

\bibitem{Dembo_Montanari}
Amir Dembo and Andrea Montanari, \emph{Gibbs measures and phase transitions on
  sparse random graphs}, Braz. J. Probab. Stat. \textbf{24} (2010), no.~2,
  137--211. \MR{2643563}

\bibitem{Nathenel_Laurent}
Nathana\"{e}l Enriquez and Laurent M\'{e}nard, \emph{Spectra of large diluted
  but bushy random graphs}, Random Structures Algorithms \textbf{49} (2016),
  no.~1, 160--184. \MR{3521277}

\bibitem{Erdos_renyi_59}
P.~Erd\H{o}s and A.~R\'{e}nyi, \emph{On random graphs. {I}}, Publ. Math.
  Debrecen \textbf{6} (1959), 290--297. \MR{120167}

\bibitem{Erdos_Renyi60}
\bysame, \emph{On the evolution of random graphs}, Magyar Tud. Akad. Mat.
  Kutat\'{o} Int. K\"{o}zl. \textbf{5} (1960), 17--61. \MR{125031}

\bibitem{nikolas_Pry}
Nikolaos Fountoulakis and Michal Przykucki, \emph{Algebraic and combinatorial
  expansion in random simplicial complexes}, Random Structures Algorithms
  \textbf{60} (2022), no.~3, 339--366. \MR{4388700}

\bibitem{Furedi_Komlos}
Z.~F\"{u}redi and J.~Koml\'{o}s, \emph{The eigenvalues of random symmetric
  matrices}, Combinatorica \textbf{1} (1981), no.~3, 233--241. \MR{637828}

\bibitem{Random_complex_survey}
Jacob~E. Goodman, Joseph O'Rourke, and Csaba~D. T\'{o}th (eds.), \emph{Handbook
  of discrete and computational geometry}, Discrete Mathematics and its
  Applications (Boca Raton), CRC Press, Boca Raton, FL, 2018, Third edition of
  [ MR1730156]. \MR{3793131}

\bibitem{uli_wagner}
Anna Gundert and Uli Wagner, \emph{On laplacians of random complexes},
  Computational geometry ({SCG}'12), ACM, New York, 2012, pp.~151--160.
  \MR{3024710}

\bibitem{Gomez_Centality}
Daniel Hern\'{a}ndez~Serrano and Dar\'{\i}o S\'{a}nchez~G\'{o}mez,
  \emph{Centrality measures in simplicial complexes: applications of
  topological data analysis to network science}, Appl. Math. Comput.
  \textbf{382} (2020), 125331, 21. \MR{4097196}

\bibitem{Jung_Lee}
Paul Jung and Jaehun Lee, \emph{Delocalization and limiting spectral
  distribution of {E}rd\h{o}s-{R}\'{e}nyi graphs with constant expected
  degree}, Electron. Commun. Probab. \textbf{23} (2018), Paper No. 92, 13.
  \MR{3896830}

\bibitem{Kahle_survey}
Matthew Kahle, \emph{Topology of random simplicial complexes: a survey},
  Algebraic topology: applications and new directions, Contemp. Math., vol.
  620, Amer. Math. Soc., Providence, RI, 2014, pp.~201--221. \MR{3290093}

\bibitem{Shu_Kanazawa_Betti}
Shu Kanazawa, \emph{Law of large numbers for {B}etti numbers of homogeneous and
  spatially independent random simplicial complexes}, Random Structures
  Algorithms \textbf{60} (2022), no.~1, 68--105. \MR{4340474}

\bibitem{Johnson_graph}
Howard Karloff, \emph{How good is the {G}oemans-{W}illiamson {MAX} {CUT}
  algorithm?}, SIAM J. Comput. \textbf{29} (1999), no.~1, 336--350.
  \MR{1718813}

\bibitem{Knill_energy}
Oliver Knill, \emph{The energy of a simplicial complex}, Linear Algebra Appl.
  \textbf{600} (2020), 96--129. \MR{4091309}

\bibitem{antti_ron}
Antti Knowles and Ron Rosenthal, \emph{Eigenvalue confinement and spectral gap
  for random simplicial complexes}, Random Structures Algorithms \textbf{51}
  (2017), no.~3, 506--537. \MR{3689342}

\bibitem{Leibzirer_Ron}
Shaked Leibzirer and Ron Rosenthal, \emph{Eigenvalues and spectral gap in
  sparse random simplicial complexes}, 2022.

\bibitem{Linial_Meshulam_2006}
Nathan Linial and Roy Meshulam, \emph{Homological connectivity of random
  2-complexes}, Combinatorica \textbf{26} (2006), no.~4, 475--487. \MR{2260850}

\bibitem{linial_peled}
Nathan Linial and Yuval Peled, \emph{On the phase transition in random
  simplicial complexes}, Ann. of Math. (2) \textbf{184} (2016), no.~3,
  745--773. \MR{3549622}

\bibitem{SIAM_intersection_graph}
Terry~A. McKee and F.~R. McMorris, \emph{Topics in intersection graph theory},
  SIAM Monographs on Discrete Mathematics and Applications, Society for
  Industrial and Applied Mathematics (SIAM), Philadelphia, PA, 1999.
  \MR{1672910}

\bibitem{Meshulam_wallach}
R.~Meshulam and N.~Wallach, \emph{Homological connectivity of random
  {$k$}-dimensional complexes}, Random Structures Algorithms \textbf{34}
  (2009), no.~3, 408--417. \MR{2504405}

\bibitem{ori_Spectrum_homology}
Ori Parzanchevski and Ron Rosenthal, \emph{Simplicial complexes: spectrum,
  homology and random walks}, Random Structures Algorithms \textbf{50} (2017),
  no.~2, 225--261. \MR{3607124}

\bibitem{ori_ron}
Ori Parzanchevski, Ron Rosenthal, and Ran~J. Tessler, \emph{Isoperimetric
  inequalities in simplicial complexes}, Combinatorica \textbf{36} (2016),
  no.~2, 195--227. \MR{3516884}

\bibitem{Salez_algebraic}
Justin Salez, \emph{Every totally real algebraic integer is a tree eigenvalue},
  J. Combin. Theory Ser. B \textbf{111} (2015), 249--256. \MR{3315609}

\bibitem{TDA_Simplicial}
Vsevolod Salnikov, Daniele Cassese, and Renaud Lambiotte, \emph{Simplicial
  complexes and complex systems}, European J. Phys. \textbf{40} (2018), no.~1,
  014001.

\bibitem{remco_van}
Remco van~der Hofstad, \emph{Random graphs and complex networks, vol 2}.

\bibitem{Zakharevich}
Inna Zakharevich, \emph{A generalization of {W}igner's law}, Comm. Math. Phys.
  \textbf{268} (2006), no.~2, 403--414. \MR{2259200}

\end{thebibliography}

\end{document}